\theoremstyle{plain}
\newtheorem{theorem}{Theorem}[section]
\newtheorem{lemma}[theorem]{Lemma}
\newtheorem{proposition}[theorem]{Proposition}
\theoremstyle{definition}
\newtheorem{definition}[theorem]{Definition}
\theoremstyle{remark}
\newtheorem{remark}[theorem]{Remark}
\numberwithin{equation}{section}
\numberwithin{figure}{section}
\numberwithin{table}{section}
\newcommand{\dd}{\mathop{}\!\mathrm{d}}
\newcommand{\dist}{\mathrm{dist}}
\newcommand{\NN}{\mathbb{N}}
\newcommand{\R}{\mathbb{R}}
\newcommand{\Z}{\mathbb{Z}}
\newcommand{\T}{\mathbb{T}}
\DeclareMathOperator{\diam}{diam}
\DeclareMathOperator{\supp}{supp}
\author{Nicolas Burq}
\address[Nicolas Burq]{Laboratoire de Mathématiques d'Orsay, UMR CNRS 8628, Orsay, France, and Institut universitaire de France}
\email{nicolas.burq@universite-paris-saclay.fr}
\author{Pierre Germain} 
\address[Pierre Germain]{Department of Mathematics, Huxley building, South Kensington campus, Imperial College London, London SW7 2AZ, United Kingdom}
\email{pgermain@ic.ac.uk}
\author{Massimo Sorella} 
\address[Massimo Sorella]{Department of Mathematics, Huxley building, South Kensington campus, Imperial College London, London SW7 2AZ, United Kingdom}
\email{msorella@ic.ac.uk}
\author{Hui Zhu}
\address[Hui Zhu]{New York University Abu Dhabi, Division of Science}
\email{hui.zhu@nyu.edu}
\begin{document}

%%%%%%%%%% Title %%%%%%%%%%
\title{Trace and Observability Inequalities for Laplace Eigenfunctions on the Torus}

\begin{abstract}
We investigate trace and observability inequalities for Laplace eigenfunctions on the $d$-dimensional torus $\T^d$, with respect to arbitrary Borel measures $\mu$. Specifically, we characterize the measures $\mu$ for which the inequalities  
\begin{equation*}
    \int |u|^2 \dd\mu \lesssim \int |u|^2 \dd x \quad \text{(trace)}, \qquad 
\int |u|^2 \dd\mu \gtrsim \int |u|^2 \dd x \quad \text{(observability)}
\end{equation*}
hold uniformly for all eigenfunctions $u$ of the Laplacian. Sufficient conditions are derived based on the integrability and regularity of $\mu$, while necessary conditions are formulated in terms of the dimension of the support of the measure.

These results generalize classical theorems of Zygmund and Bourgain--Rudnick to higher dimensions.
Applications include results in the spirit of Cantor--Lebesgue theorems, constraints on quantum limits, and control theory for the Schrödinger equation.

Our approach combines several tools: the cluster structure of lattice points on spheres; decoupling estimates; and the construction of eigenfunctions exhibiting strong concentration or vanishing behavior, tailored respectively to the trace and observability inequalities.
\end{abstract}

\maketitle

%\tableofcontents

\section{Introduction}

\subsection{Trace and observability inequalities}

For $d\ge 2$ and $\lambda \in \sqrt{\NN}= \{\sqrt{n} : n \in \NN\}$, let $\mathcal{S}^{d-1}_\lambda = \Z^d \cap \lambda \mathbb{S}^{d-1}$.
The eigenspace associated to the eigenvalue $-4 \pi^2 \lambda^2$ of the Laplacian $\Delta$ on the torus $\T^d = \R^d / \Z^d$ is given by
\begin{equation}
    E_\lambda = \biggl\{ \sum_{k \in \mathcal{S}^{d-1}_\lambda} \widehat{u}_k e^{2\pi i k \cdot x} :  \widehat{u}_k \in \mathbb{C}, \ \forall  k \in \mathcal{S}^{d-1}_\lambda \biggr\}.
\end{equation}

Let $\mathcal{P}(\T^d)$ be the set of Borel probability measures on $\T^d$.
For $\mu \in \mathcal{P}(\T^d)$, let
$\mathfrak{C}_\mu$ resp.\ $\mathfrak{c}_\mu$ be the infimum of all $\mathfrak{C} \in [0, \infty]$ resp.\ $\mathfrak{c} \in [0,\infty]$ such that for all $u \in \bigcup_{\lambda \in \sqrt{\NN}} E_\lambda$, there holds
\begin{equation}
\label{eq::trace-obs-ineq-def}
    \int_{\T^d} |u|^2 \dd \mu \leq \mathfrak{C} \int_{\T^d} |u|^2 \dd x,
    \quad
    \text{resp.}
    \quad
    \int_{\T^d} |u|^2 \dd x \leq \mathfrak{c} \int_{\T^d} |u|^2 \dd \mu.
\end{equation}
If the first inequality holds for some $\mathfrak{C} < \infty$, then we say that the \textit{trace inequality} (of toral eigenfunctions) holds true for $\mu$. 
If the second inequality holds for some $\mathfrak{c} < \infty$, then we say that the \textit{observability inequality} (of toral eigenfunctions) holds true for $\mu$.

\subsection{State of the art}
Known results are as follows: the  trace inequality holds if
\begin{itemize}
    \item $d=2$ and $\dd \mu = f \dd x$, where $f \in L^2(\T^d)$.
    This was proved by Zygmund \cite{Zygmund1974}.
    \smallskip
    \item $d \geq 2$ and $\widehat{\mu} \in \ell^{\frac d {d-1}}$, where $\widehat{\mu} : \Z^d \to \mathbb{C}$ denotes the Fourier transform of $\mu$.
    This is a consequence of the work by Jakobson \cite{Jakobson1997quantum}, see also  Aïssiou \cite{Aissiou2013semiclassical}.
    \smallskip
    \item $d=2,3$ and $\mu$ is the superficial measure on a nonempty open subset of a smooth ($d=2$) or analytic $(d=3$) hypersurface with non-vanishing Gaussian curvature.
    This was proved by Bourgain and Rudnick \cite{BourgainRudnick2009restriction}.
    \smallskip
    \item $d \geq 2$ and $\mu$ is the superficial measure on a nonempty open subset of a rational   hyperplane (i.e.\  a hyperplane whose normal vector has rationally dependent coordinates).
    This is due to Huang and Zhang \cite{HuangZhang}.
\end{itemize}

The  observability inequality holds  if
\begin{itemize}
    \item $d=2$ and $\dd \mu = \mathbf{1}_E \dd x$ where $E$ is a Borel set of positive Lebesgue measure.
    This is due to Zygmund \cite{Zygmund1972}, and implies the same for $\dd \mu = f \dd x$ where $f \in L^1(\T^d)$.
    \smallskip
    \item $d \geq 2$ and $\dd \mu = \mathbf{1}_E \dd x$ where $E$ is a nonempty open set.
    This is due to Connes \cite{Connes1976coefficients} and implies the same for continuous densities.
\end{itemize}

The observability inequality does not hold if 
{$\supp \mu$} is contained in the zero set of an eigenfunction. This obstruction prompts the definition of the \textit{semiclassical observability inequality}, which requires the existence of $\mathfrak{c} \in [0,\infty)$ and $\lambda_0 \in [0,\infty)$ such that the second estimate in \eqref{eq::trace-obs-ineq-def} holds for all $u \in \bigcup_{\lambda_0 < \lambda \in \sqrt{\NN}} E_\lambda$.
The optimal constant $\mathfrak{c}$ as $\lambda_0 \to \infty$ is denoted $\mathfrak{c}_\mu^{\operatorname{sc}}$.
\begin{itemize}
    \item The semiclassical observability inequality holds if $d=2,3$ and $\mu$ is the superficial measure on a nonempty open subset of an analytic hypersurface with non-vanishing Gaussian curvature, a result by Bourgain and Rudnick \cite{BourgainRudnick2009restriction}.
\end{itemize}

By contrast, the  semiclassical observability inequality does not hold if $ \mu$ is supported on a rational hyperplane, or on a cylinder of the type $\{ (x_1,\dots x_d) \in \T^d : (x_1,\dots x_k) \in Z \}$, where $k \leq d-1$ and $Z$ the zero set of an eigenfunction in $\T^k$. Indeed, there are eigenfunctions with arbitrarily large eigenvalues vanishing on such sets. The  semiclassical observability inequality does not hold either for measures supported on general hyperplanes, as a consequence of a Diophantine approximation argument.

Finally, the very recent manuscript \cite{BurqZhu} by two of the authors of the present paper addresses the question of observability for the time-dependent Schr\"odinger equation (with potential) on the torus, which is closely related to the observability of eigenfunctions. This paper shows that, under rather weak conditions, trace inequality implies observability inequality. 
In particular, it shows that (weak forms of) uniform $L^p$ bounds ($p>2$) for eigenfunctions imply observability from general Borel sets with positive measures.

\subsection{Broader context}

The trace inequality studied in the present manuscript is uniform in $\lambda$.
Some toral eigenfunction bounds, however, do grow with $\lambda$, and a complementary line of research aims at finding optimal estimates in this regime  \cite{Bourgain1993,BourgainDemeter,HuangZhang}. One can also consider quasimodes, i.e.\  functions with narrow spectral localization \cite{GermainMyerson,Germain}.

This trace inequality has a natural counterpart in Euclidean spaces.
Choosing $\mu$ as a probability measure on $\R^d$, the problem is to find conditions on $\mu$ such that the Fourier extension operator $g \mapsto \mathcal{F}(g \dd \sigma_{\mathbb{S}^{d-1}})$ (where $\mathcal{F}$ is the Fourier transform on $\R^d$) is a bounded map from $L^2(\mathbb{S}^{d-1})$ to $L^2(\dd \mu)$ (note that the parameter $\lambda$ disappears by scale invariance).
A sufficient condition on $\mu$ was conjectured by Mizohata and Takeuchi and was very recently disproved \cite{Cairo}, see also \cite{CarberyIliopoulouWang} and references therein for positive results.

Eigenfunction bounds have also been investigated for general compact Riemannian manifolds.
Universal estimates are known \cite{BurqGerardTzvetkov, Sogge} and are usually saturated on spheres. 

Similarly, one can ask about observability inequalities for general compact Riemannian manifolds. The answer depends strongly on the global geometry of the manifold.
For instance, observability inequalities from nonempty open sets hold true on surfaces with negative curvature \cite{Jin2017,DyatlovJinNonnenmacher}, but not on spheres, where  \emph{highest weight spherical harmonics} may concentrate on great circles.

Finally, a last line of research consists in adding a potential on the torus and asking for observability inequalities; for this we refer to \cite{BurqZhu}.

\subsection{Necessary conditions}

{We now present our main results. The reader may refer to Subsection~\ref{sec::notations} for notations and conventions used in their statements and throughout the paper.}

Recall that a Borel measure $\mu$ on $\T^d$ is upper $\alpha$-regular if 
\begin{equation*}
    \mu(B_r(x_0)) \lesssim r^\alpha, \quad \forall x_0 \in \T^d,\ \forall r \in (0,1),
\end{equation*}
where $B_r(x_0)$ is the (geodesic) ball on $\T^d$ centered at $x_0$ with radius $r$.

We obtain the following necessary conditions for the trace and observability inequalities.

\begin{theorem}[Necessary condition for the trace inequality]
\label{thm:trace-necessary}
   Let $d \geq 2$. If the trace inequality holds for $\mu \in \mathcal{P}(\T^d)$, then $\mu$ is upper $(d-2)$-regular, and hence the Hausdorff dimension of its support is $\geq d-2$.
\end{theorem}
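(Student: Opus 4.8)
The plan is to test the trace inequality against eigenfunctions concentrating, as strongly as the uncertainty principle permits, on a ball of radius $\sim 1/\lambda$; this converts the hypothesis into an upper bound for $\mu$ of such a ball in terms of $1/\#\mathcal S^{d-1}_\lambda$, and the theorem then reduces to an essentially arithmetic statement: for each small $r>0$ one can find an eigenvalue $\lambda^2$ with $\lambda \lesssim 1/r$ and $\#\mathcal S^{d-1}_\lambda \gtrsim r^{-(d-2)}$. Since $\mu$ is a probability measure, the statement is vacuous for $d=2$ (where it reads $\mu(B_r(x_0))\lesssim 1$) and trivial for $r$ bounded below, so the content lies in the regime $d\ge 3$, $r\to 0$, which is what I will treat.

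\emph{Step 1 (the concentrated eigenfunction).} For $\lambda=\sqrt n\in\sqrt\NN$ with $\mathcal S^{d-1}_\lambda\ne\emptyset$, take $u_\lambda(x)=\sum_{k\in\mathcal S^{d-1}_\lambda}e^{2\pi i k\cdot x}\in E_\lambda$ — the Dirichlet-type kernel of the eigenspace — and set $N_\lambda=\#\mathcal S^{d-1}_\lambda\ge 1$. By orthogonality $\int_{\T^d}|u_\lambda|^2\dd x=N_\lambda$, while for $x\in B_{1/(8\lambda)}(0)$, writing $x$ in its representative in $(-\tfrac12,\tfrac12)^d$ so that $|k\cdot x|\le|k|\,|x|\le\lambda\cdot\tfrac1{8\lambda}=\tfrac18$, one gets $\operatorname{Re}u_\lambda(x)=\sum_{k}\cos(2\pi k\cdot x)\ge N_\lambda\cos(\pi/4)>N_\lambda/\sqrt2$, hence $|u_\lambda|^2\ge N_\lambda^2/2$ on that ball. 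Plugging $u_\lambda$ — translated to center the bump at an arbitrary $x_0$, which is legitimate as the eigenspaces are translation invariant — into \eqref{eq::trace-obs-ineq-def} yields
\[
  \tfrac12 N_\lambda^2\,\mu\bigl(B_{1/(8\lambda)}(x_0)\bigr)\le\int_{\T^d}|u_\lambda|^2\dd\mu\le\mathfrak C_\mu\int_{\T^d}|u_\lambda|^2\dd x=\mathfrak C_\mu N_\lambda,
\]
i.e.\ $\mu\bigl(B_{1/(8\lambda)}(x_0)\bigr)\le 2\mathfrak C_\mu/N_\lambda$, uniformly in $x_0$.

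\emph{Step 2 (choosing $\lambda$, and the dimension bound).} It remains to produce a good $\lambda$ for each small $r$. Here I would use the soft identity $\sum_{M\le n\le 2M}\#\mathcal S^{d-1}_{\sqrt n}=\#\{k\in\Z^d:M\le|k|^2\le 2M\}$, which by the elementary volume-plus-boundary count equals $\operatorname{vol}\{\sqrt M\le|k|\le\sqrt{2M}\}+O(M^{(d-1)/2})\asymp_d M^{d/2}$ for $M$ large; as there are at most $M+1$ summands, some integer $n^\ast\in[M,2M]$ satisfies $\#\mathcal S^{d-1}_{\sqrt{n^\ast}}\gtrsim_d M^{(d-2)/2}$. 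Given $r$ small (in terms of $d$), pick an integer $M\asymp r^{-2}$ with $\sqrt{2M}\le 1/(8r)$; then $\lambda:=\sqrt{n^\ast}\le 1/(8r)$, so $B_r(x_0)\subseteq B_{1/(8\lambda)}(x_0)$ and Step 1 gives $\mu(B_r(x_0))\le 2\mathfrak C_\mu/\#\mathcal S^{d-1}_{\sqrt{n^\ast}}\lesssim_d \mathfrak C_\mu\,M^{-(d-2)/2}\lesssim_d\mathfrak C_\mu\,r^{d-2}$, while for $r$ bounded below $\mu(B_r(x_0))\le 1\lesssim r^{d-2}$ is immediate. This is upper $(d-2)$-regularity. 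The bound $\dim_H\supp\mu\ge d-2$ is then the usual mass distribution principle: for any cover of $\supp\mu$ by balls $B_{r_i}(x_i)$, $1=\mu(\supp\mu)\le\sum_i\mu(B_{r_i}(x_i))\lesssim\sum_i r_i^{d-2}$, so $\mathcal H^{d-2}(\supp\mu)>0$.

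\emph{Main obstacle.} The delicate point is the \emph{sharp} exponent $d-2$ (no $\varepsilon$-loss), which is really an issue only for $d=3$: there one cannot fix $\lambda\sim 1/r$, because $\#\mathcal S^{2}_{\sqrt n}$ fluctuates wildly — it vanishes whenever $n\equiv 7\pmod 8$ and is only $\ll_\varepsilon n^{1/2+\varepsilon}$ in general — so the argument must exploit the freedom to take $\lambda$ anywhere in a short window around $1/r$, together with the fact that the \emph{average} of $\#\mathcal S^{d-1}_{\sqrt n}$ over such a window is $\asymp(1/r)^{d-2}$. (For $d\ge 4$ one could alternatively invoke pointwise lower bounds $\#\mathcal S^{d-1}_{\sqrt n}\gtrsim n^{(d-2)/2}$ valid on suitable congruence classes of $n$, but the pigeonhole argument just described is uniform in $d\ge 3$.)
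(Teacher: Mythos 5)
Your argument is correct and matches the paper's proof essentially step for step: both test the trace inequality against the concentrated eigenfunction $\varphi_{\lambda,x_0}=\sum_{k\in\mathcal S^{d-1}_\lambda}e^{2\pi ik\cdot(x-x_0)}$, both use the same lower bound $|\varphi_{\lambda,x_0}|\gtrsim N_d(\lambda)$ on a ball of radius $\sim 1/\lambda$, both produce $\lambda\sim 1/r$ with $N_d(\lambda)\gtrsim\lambda^{d-2}$ by a pigeonhole over a window of eigenvalues (the paper packages this as Lemma~\ref{lem::lattice-number-lower-bound}; you spell it out with a dyadic window, which is equivalent), and both finish via Frostman. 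The only cosmetic difference is that your pigeonhole window is $[M,2M]$ rather than the paper's $[R-1,R+1]$, which changes nothing.
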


This result is proved in Section~\ref{sec:trace-nec}, see that section for more precise and improved necessary conditions on $\mu$ in low dimensions $d=2,3,4$.

\begin{theorem}[Necessary condition for the observability inequality]
\label{thm:obs-necessary}
Let $d \geq 2$. If the observability inequality holds for $\mu \in \mathcal{P}(\T^d)$, then the Minkowski dimension of its support is $\geq d-2$.
\end{theorem}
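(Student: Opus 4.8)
The plan is to argue by contradiction: suppose the observability inequality holds for $\mu$, yet the Minkowski dimension of $\supp\mu$ is some $\alpha < d-2$. The strategy is to construct, for a sequence of eigenvalues $\lambda \to \infty$, eigenfunctions $u_\lambda \in E_\lambda$ that are normalized in $L^2(\T^d)$ but whose mass concentrates away from $\supp\mu$, so that $\int|u_\lambda|^2\,\dd\mu \to 0$, contradicting the lower bound $\int|u_\lambda|^2\,\dd x \le \mathfrak{c}\int|u_\lambda|^2\,\dd\mu$. Since the excerpt advertises ``the construction of eigenfunctions exhibiting strong concentration or vanishing behavior, tailored respectively to the trace and observability inequalities,'' I expect the observability half to rely on \emph{vanishing} eigenfunctions: functions in $E_\lambda$ that are small (or identically zero) on a neighborhood of the support.

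The key steps, in order. First, cover $\supp\mu$ by $N(\delta) \lesssim \delta^{-\alpha'}$ balls of radius $\delta$, for any $\alpha' \in (\alpha, d-2)$; this is exactly what the Minkowski dimension bound gives. Second, fix $\delta$ comparable to a negative power of $\lambda$ (the natural wavelength scale, or slightly coarser), and look for an eigenfunction that vanishes on the union of these balls. The mechanism: the eigenspace $E_\lambda$ has dimension $r_d(\lambda^2) = \#\mathcal{S}^{d-1}_\lambda$, which for $d \ge 5$ grows like $\lambda^{d-2}$ (and along suitable subsequences for $d = 3,4$ as well), while imposing ``$u$ vanishes on a $\delta$-ball'' costs, after a suitable discretization of the ball at scale $1/\lambda$, at most $O((\lambda\delta)^d)$ linear conditions — or, more efficiently, one can impose vanishing of $u$ together with enough derivatives at the centers, or vanishing at a single point, costing $O(N(\delta))$ conditions total if we only ask for pointwise vanishing at the centers. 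Balancing: as long as the number of linear constraints is strictly less than $\dim E_\lambda$, a nonzero eigenfunction satisfying them exists by linear algebra. Third, from such a $u_\lambda$ one must actually control $\int|u_\lambda|^2\,\dd\mu$: pointwise vanishing at centers is not enough for a general measure, so one needs either (a) a quantitative unique-continuation / gradient bound $\|u_\lambda\|_{L^\infty(B_\delta)} \lesssim \lambda \delta\,\|u_\lambda\|_{L^\infty}$ when $u_\lambda$ vanishes at the center, combined with the Bernstein bound $\|u_\lambda\|_{L^\infty} \lesssim \lambda^{(d-1)/2}\|u_\lambda\|_{L^2}$, so that $\int|u_\lambda|^2\,\dd\mu \lesssim (\lambda\delta)^2 \lambda^{d-1}$, which $\to 0$ provided $\delta \ll \lambda^{-(d+1)/2}$; then one needs $N(\delta) = \delta^{-\alpha'} \ll \lambda^{d-2}$, i.e.\ $\lambda^{\alpha'(d+1)/2} \ll \lambda^{d-2}$, which holds for $\alpha'$ close enough to $d-2$ precisely when... one checks this is self-defeating, so instead (b) impose vanishing to higher order $m$ at each center, giving $\|u_\lambda\|_{L^\infty(B_\delta)} \lesssim (\lambda\delta)^m \lambda^{(d-1)/2}$ at the cost of $\binom{m+d}{d} N(\delta)$ conditions, and send $m \to \infty$ slowly; the exponents then decouple and the construction closes for any $\alpha' < d-2$.

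The main obstacle — and the step requiring genuine input from the structure of lattice points on spheres rather than soft linear algebra — is guaranteeing that $\dim E_\lambda$ is large enough along a sequence $\lambda \to \infty$, in every dimension $d \ge 2$. For $d \ge 5$ one has $\#\mathcal{S}^{d-1}_\lambda \gtrsim \lambda^{d-2}$ for all admissible $\lambda$, which is clean; but for $d = 2, 3, 4$ the representation numbers $r_d(n)$ fluctuate wildly (and $d=2$, where $d-2 = 0$, needs a separate, essentially trivial argument: any nonempty open set has positive dimension $\ge 0$, and one falls back on the known obstruction via zero sets of eigenfunctions), so one must select a subsequence $n_j \to \infty$ along which $r_d(n_j)$ is suitably large (e.g.\ $n_j$ a product of many split primes), and simultaneously ensure the geometric constraint count stays below it. A secondary technical point is converting ``Minkowski dimension $< d-2$'' into a covering bound at the \emph{specific} scale $\delta(\lambda)$ dictated by the eigenfunction construction, uniformly along the chosen subsequence; this is where using Minkowski (box-counting) dimension rather than Hausdorff dimension is essential, since it gives covering numbers at every small scale, not merely along some sequence of scales. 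I would also double-check the endpoint behavior and the precise power of $\lambda$ in the Bernstein/$L^\infty$ bound for eigenfunctions on $\T^d$, as these determine the exact range of $\delta$ for which the argument closes.
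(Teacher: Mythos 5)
Your proposal is correct, and it captures the essential structure of the paper's argument: cover $\supp\mu$ at a scale tied to $\lambda$, impose linear constraints on $E_\lambda$ to make the eigenfunction small on that cover, and win the constraint-versus-dimension count because $\#\mathcal{S}^{d-1}_\lambda \gtrsim \lambda^{d-2}$ along a subsequence (the paper extracts this cheaply from the pigeonhole principle, Lemma~\ref{lem::lattice-number-lower-bound}, rather than from split-prime products). Where you genuinely diverge from the paper is in how the smallness on each covering ball is produced. You impose vanishing of $\partial^\beta u(x_j)$ for $|\beta| < m$ at the $N(\delta)$ centers and control $u$ on $B_\delta(x_j)$ via a Taylor remainder together with the Bernstein-type bounds $\|\nabla^m u\|_{L^\infty} \lesssim \lambda^m N_d(\lambda)^{1/2}\|u\|_{L^2}$; this gives $\|u\|_{L^\infty(B_\delta(x_j))} \lesssim_m (\lambda\delta)^m \lambda^{(d-2)/2+\epsilon}$, which vanishes for $\delta = \lambda^{-1-\eta}$ once $m$ is fixed large enough relative to $\eta$. (Your $\lambda^{(d-1)/2}$ is the generic Sogge $L^\infty$ bound; for toral eigenfunctions the trivial Cauchy--Schwarz bound $N_d(\lambda)^{1/2}\lesssim\lambda^{(d-2)/2+\epsilon}$ is sharper, but either suffices, and you also do not actually need $m\to\infty$: a fixed large $m$ closes the argument.) The paper instead constructs a local Fourier representation (Lemma~\ref{preplemma}): a family of functionals $\alpha_{\lambda,\ell,x_0}$ such that annihilating $\alpha_{\lambda,\ell,x_j}(u)$ for $|\ell|\le\lambda^\epsilon$ makes $|u|\lesssim\lambda^{d/2-\epsilon N}$ on $B_{1/(4\lambda)}(x_j)$, with an arbitrarily high power of $\lambda$ gained by choosing $N$ large. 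The paper also pre-smooths $\mu$ by a Fourier cutoff $\Phi$ with $\widehat\Phi_k=1$ for $|k|\le 2\lambda$, exploiting $\int|u|^2\,\dd\mu=\int|u|^2\,\dd(\Phi*\mu)$ before covering the enlarged support; in your version this step is bypassed because the sup-norm bound on the $\delta$-balls directly controls $\int|u|^2\,\dd\mu\le\|u\|^2_{L^\infty(\supp\mu)}\cdot\mu(\T^d)$, which is arguably cleaner. The two routes are equivalent in strength; your Taylor-expansion mechanism is a legitimate and somewhat more elementary substitute for the paper's local Fourier-series lemma, though the latter also powers the sharper submanifold results (Propositions~\ref{local} and~\ref{global}). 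One last point: your discussion of the $d=2$ case is more fraught than needed, since Minkowski dimension is nonnegative by definition, so the theorem is vacuous when $d=2$.
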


This result is proved in Section~\ref{sec:observability-nec}, see that section for more necessary conditions on measure $\mu$ supported on $d-2$ and $d-3$ dimensional hypersurfaces. 
For both results,  the number $d-2$ corresponding to the power law in $\# \mathcal{S}^{d-1}_\lambda \sim_d \lambda^{d-2}$ plays a crucial role (this equivalence is true in dimension $\geq 5$, but fluctuations appear in smaller dimensions).

\subsection{Sufficient conditions} 

Is the $(d-2+\epsilon)$-regularity (where $\epsilon > 0$) also sufficient for the trace inequality to hold?
This is conceivable since we do not have counter-examples. 
Notice that the trace inequality for $(d-2+\epsilon)$-regular measures implies  two major conjectures in the field (see Subsection~\ref{openquestions} for more on various open questions):
\begin{itemize}
    \item The trace inequality for hypersurfaces \cite{BourgainRudnick2012};
    \smallskip
    \item The estimate for eigenfunctions: $\| u \|_{L^p} \lesssim \| u \|_{L^2}$ if $p < \frac{2d}{d-2}$ proposed in \cite{Bourgain1993,Bourgain2001}.
\end{itemize}
As far as the observability inequality goes, some curvature assumptions are certainly needed beyond the Minkowski dimension condition, as can be seen in the case of flat hypersurfaces.

We now turn to the sufficient conditions we are able to prove.

\subsubsection{Fourier decay}

In Sections \ref{sec:dimension2} and \ref{sectionpointwiseFourier}, using point counting techniques and dimension induction, we establish the trace and observability inequalities when certain Fourier decay conditions hold for $\mu$.
In the following theorems $(\widehat{\mu}_k)_{k\in\Z^d}$ denote Fourier series of $\mu$.

\begin{theorem}
\label{thm::fourier-decay}
    Let $d \geq 2$. The trace and observability inequalities hold if $\mu \in \mathcal{P}(\T^d)$ satisfies, for some $\epsilon>0$:
    \begin{equation}
    \label{eq::mu-condition-fourier-decay}
        |\widehat{\mu}_k| \lesssim |k|^{2-d-\epsilon}.
    \end{equation}
\end{theorem}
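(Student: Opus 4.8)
The plan is to reformulate both inequalities as a single spectral statement and then feed in the arithmetic of lattice points on spheres. Fix $\lambda\in\sqrt{\NN}$ and identify $u=\sum_{k\in\mathcal{S}^{d-1}_\lambda}\widehat u_k e^{2\pi i k\cdot x}\in E_\lambda$ with its coefficient vector $a=(\widehat u_k)\in\ell^2(\mathcal{S}^{d-1}_\lambda)$, so that $\int_{\T^d}|u|^2\dd x=\|a\|_{\ell^2}^2$ and $\int_{\T^d}|u|^2\dd\mu=\langle M_\lambda a,a\rangle$, where $M_\lambda=(\widehat\mu_{k-k'})_{k,k'\in\mathcal{S}^{d-1}_\lambda}$ is positive semidefinite. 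Then the trace inequality in \eqref{eq::trace-obs-ineq-def} is exactly $M_\lambda\le\mathfrak{C}\,\mathrm{Id}$ uniformly in $\lambda$, and the observability inequality is $M_\lambda\ge\mathfrak{c}^{-1}\,\mathrm{Id}$ uniformly in $\lambda$. Since $\widehat\mu_0=1$, write $M_\lambda=\mathrm{Id}+R_\lambda$ with $R_\lambda$ the off-diagonal part; the trace inequality follows from $\|R_\lambda\|\lesssim 1$, and the observability inequality from $R_\lambda\ge-(1-c)\,\mathrm{Id}$ for a fixed $c\in(0,1)$, both uniformly in $\lambda$.

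For the trace inequality I would apply the Schur test, $\|R_\lambda\|\le\sup_{k\in\mathcal{S}^{d-1}_\lambda}\sum_{k'\ne k}|\widehat\mu_{k-k'}|$, decompose dyadically in $|k-k'|$, and insert \eqref{eq::mu-condition-fourier-decay}, reducing matters to the cap-counting estimate
\[
\#\bigl(\mathcal{S}^{d-1}_\lambda\cap B_r(\xi)\bigr)\lesssim_{d,\delta}r^{\,d-2+\delta}\qquad(\xi\in\R^d,\ r\ge 1,\ \delta>0),
\]
after which $\|R_\lambda\|\lesssim\sum_{j\ge 0}2^{j(2-d-\epsilon)}2^{j(d-2+\delta)}\lesssim_{d,\epsilon}1$ on taking $\delta=\epsilon/2$. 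The cap-counting estimate is itself proved by induction on $d$: slicing $\Z^d=\Z^{d-1}\times\Z$ in the last coordinate, the cap meets $\lesssim r$ values of $k_d$, and each slice is a cap of radius $\le r$ on the lattice sphere $\mathcal{S}^{d-2}_{(\lambda^2-k_d^2)^{1/2}}$, to which the $(d-1)$-dimensional bound applies. The base case $d=2$ is the one arithmetic input: an arc of length $\ell$ on a circle of radius $\lambda$ carries at most two lattice points once $\ell\lesssim\lambda^{1/3}$ (a nondegenerate lattice triangle has area $\ge\tfrac12$, while a triangle inscribed in such a short arc has area $\lesssim\ell^3/\lambda$), and at most $\#\mathcal{S}^1_\lambda\lesssim_\delta\lambda^{\delta}$ otherwise; hence $\#(\mathcal{S}^1_\lambda\cap B_r(\xi))\lesssim_\delta r^{\delta}$, which is the $d=2$ instance.

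For the observability inequality the Schur bound only gives $\|R_\lambda\|=O(1)$, which need not be $<1$: lattice points on spheres cluster, and the near-diagonal part of $R_\lambda$ is not small. I would split $R_\lambda=R_\lambda^{\mathrm{far}}+R_\lambda^{\mathrm{near}}$, where $R_\lambda^{\mathrm{far}}$ carries the entries with $|k-k'|>T$; by the same dyadic Schur estimate, $\|R_\lambda^{\mathrm{far}}\|\lesssim_{d,\epsilon}T^{-\epsilon/2}$, which is $<\tfrac12$ once $T$ is a large enough fixed constant. The operator $\mathrm{Id}+R_\lambda^{\mathrm{near}}$ is block diagonal for the partition of $\mathcal{S}^{d-1}_\lambda$ into $T$-clusters (connected components of the graph joining lattice points at distance $\le T$), so it suffices to prove $M_\lambda^{\mathcal{C}}:=(\widehat\mu_{k-k'})_{k,k'\in\mathcal{C}}\ge c_0\,\mathrm{Id}$ with a fixed $c_0>\tfrac12$ for every cluster $\mathcal{C}$. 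This is where the cluster structure of lattice points on spheres is used: for $\lambda$ large, a set $\mathcal{C}$ of mutually nearby lattice points on $\mathcal{S}^{d-1}_\lambda$ is confined to a proper rational affine subspace $V$, and $V\cap\lambda\mathbb{S}^{d-1}$ is a lower-dimensional sphere; an integral change of variables rewrites $\sum_{k\in\mathcal{C}}a_k e^{2\pi i k\cdot x}$ as $e^{2\pi i\xi\cdot x''}$ times a function $v(x')$ which, up to a modulation, is a toral eigenfunction in $m<d$ variables, and since modulations leave $|\cdot|^2$ unchanged and projections do not enlarge Fourier coefficients, $\langle M_\lambda^{\mathcal{C}}a,a\rangle=\int_{\T^m}|v|^2\dd\nu$ for $\nu=\pi_*\mu$, which again satisfies $|\widehat\nu_{k'}|\lesssim|k'|^{2-d-\epsilon}\le|k'|^{2-m-\epsilon}$. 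The inductive hypothesis (observability in dimension $m$, the one-dimensional case being a direct computation from \eqref{eq::mu-condition-fourier-decay}) then gives $\int|v|^2\dd\nu\gtrsim\|v\|_{L^2}^2$. Clusters of bounded size — equivalently bounded diameter, hence finitely many "shapes" — and the finitely many small $\lambda$ are handled separately: there $M_\lambda^{\mathcal{C}}$ depends only on the shape of $\mathcal{C}$, and it is positive definite because \eqref{eq::mu-condition-fourier-decay} prevents any bounded-degree trigonometric polynomial of the relevant form from vanishing $\mu$-almost everywhere. Summing up, $\lambda_{\min}(M_\lambda)\ge c_0-\tfrac12>0$ uniformly. (Alternatively, once the trace inequality is established one may attempt to deduce observability through the "trace $\Rightarrow$ observability" mechanism of \cite{BurqZhu}.)

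The hard part is the structural statement about clusters used above: making precise, and uniform in $\lambda$, that a cluster of nearby lattice points on $\mathcal{S}^{d-1}_\lambda$ lies on a proper rational affine subspace, tracking the (rational, possibly non-integer) centre and radius of the resulting lower-dimensional sphere, the distortion caused by non-orthogonal integral changes of variables, and the error of replacing $\mathcal{C}$ by its affine hull — this is the Bourgain–Rudnick-type geometric input — together with setting up the induction for the slightly larger class of eigenfunctions (on possibly shifted lattices) it produces and checking that the Fourier-decay exponent is not degraded by the reductions, as well as untangling the dependence of $T$ on the sizes of the exceptional clusters. By contrast the $d=2$ material (the arc-counting estimate, and the two-dimensional trace and observability step) is elementary, and is the content of the dedicated two-dimensional section~\ref{sec:dimension2}.
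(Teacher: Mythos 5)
Your overall strategy — decompose $\mathcal{S}^{d-1}_\lambda$ into clusters, kill cross-cluster terms using the Fourier decay together with a cap-counting estimate, and set up an induction on dimension for the within-cluster terms — is indeed the strategy of the paper (Section~\ref{sectionpointwiseFourier}), and the separate treatment of the finitely many small $\lambda$ is the content of Lemma~\ref{lem::trace-obs-low-freq-general}. Within that shared skeleton, you make two genuine deviations, one that simplifies and one that complicates.

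For the trace inequality, reformulating as the operator bound $\|R_\lambda\|\lesssim 1$ and applying the Schur test plus a dyadic decomposition is a cleaner and more modular route than the paper's, which obtains the trace estimate along the way in the clustering argument. Note, however, that your Schur argument requires a cap bound $N_d(\lambda,r)\lesssim_{d,\delta} r^{d-2+\delta}$ that is uniform in $\lambda$, which is \emph{not} what the paper records in Lemma~\ref{lemma:spherical-cap} (the paper's bound $\lambda^\epsilon r^{d-2}$ carries a $\lambda^\epsilon$ loss and would wreck your fixed-threshold Schur sum). Your slicing-plus-Jarník derivation of the $\lambda$-free cap bound is correct and is a genuine improvement in how the cap-counting is set up; the paper instead compensates for the $\lambda^\epsilon$ loss by using Connes' $\lambda$-dependent separation threshold $\lambda^{2/(d+1)!}$, so the two ingredients (cap bound versus cluster separation) trade off against one another.

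For the observability inequality, the crucial structural difference is the inductive framework. You propose to change variables so that the frequencies in a cluster live in a lower-dimensional lattice, and to regard the within-cluster piece as an eigenfunction on a torus $\T^m$ with $m<d$, tested against a pushforward $\pi_*\mu$. The paper avoids this reduction entirely: it keeps the eigenfunction on the original $\T^d$ and inducts on $n=\dim_{\mathrm{aff}}\supp\widehat{u}$, using that Connes' Lemma~\ref{lemma:Connes} is stated for an arbitrary $(d-1)$-sphere embedded in $\R^n$ and therefore applies recursively to the lower-dimensional spheres that carry each $\Omega_\alpha$. Because the bilinear form $\sum_{k,\ell}\widehat\mu_{k-\ell}\widehat u_k\overline{\widehat u_\ell}$ is intrinsic to $\T^d$, nothing in the paper's induction ever involves a change of variables, a rational but non-orthogonal lattice reduction, a sphere with non-integer centre or radius, or a non-standard Laplacian — exactly the list of difficulties you flag in your last paragraph. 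Those difficulties are real, which is the strongest argument for the paper's formulation: the induction hypothesis should be on $\dim_{\mathrm{aff}}\supp\widehat u$ for eigenfunctions on the \emph{same} $\T^d$, not on the ambient torus dimension. There is also a mild circularity in your near/far split: you need the cluster lower bound $c_0>1/2$ in order to beat the far-part norm, but the clusters (hence $c_0$) depend on $T$ while the far-part threshold is $T$ itself. The paper sidesteps this by splitting on $\diam\supp\widehat u\gtrless r$ \emph{before} clustering, handling the small-diameter case once and for all by compactness (Lemma~\ref{lem::trace-obs-low-freq-general}), and letting the cross-cluster error be $o(1)$ in $r$, which fixes the constants cleanly.
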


Note that the decay rate $d-2$ in the previous theorem is sharp, see Remark \ref{lastremark}. If $d \ge 5$, this theorem can be improved to reach a condition which shares the same scaling invariance as $(d-2)$-regular measures.

\begin{theorem}
\label{thm::fourier-dyadic}
    For $d \ge 5$, the trace and observability inequalities hold if $\mu \in \mathcal{P}(\T^d)$ satisfies
    \begin{equation}
    \label{eq::mu-condition-dyadic}
        \sum_{j \ge 0} 2^{j(d-2)} \sup_{|k| \in [2^j,2^{j+1}]} |\widehat{\mu}_k| < \infty.
    \end{equation}
\end{theorem}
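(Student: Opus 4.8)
The plan is to recast both inequalities as spectral bounds for a family of Hermitian matrices, to prove the trace inequality by a Schur test fed with lattice point counting on spheres, and to obtain observability by first noticing that in dimension $d\ge 5$ the hypothesis \eqref{eq::mu-condition-dyadic} forces $\mu$ to be absolutely continuous. Fixing $\lambda\in\sqrt{\NN}$ and $u=\sum_{k\in\mathcal{S}^{d-1}_\lambda}\widehat{u}_ke^{2\pi ik\cdot x}\in E_\lambda$, one has $\int_{\T^d}|u|^2\dd x=\sum_{k\in\mathcal{S}^{d-1}_\lambda}|\widehat{u}_k|^2$ and $\int_{\T^d}|u|^2\dd\mu=\langle A_\lambda\widehat{u},\widehat{u}\rangle$, where $A_\lambda$ is the positive semidefinite Hermitian matrix on $\ell^2(\mathcal{S}^{d-1}_\lambda)$ with entries $(A_\lambda)_{k,k'}=\widehat{\mu}_{k-k'}$. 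Since $\mu$ is a probability measure $\widehat{\mu}_0=1$, so $A_\lambda=I+B_\lambda$ where $B_\lambda$ is $A_\lambda$ with the diagonal set to zero. In these terms the trace inequality amounts to $\sup_\lambda\|A_\lambda\|<\infty$, and the observability inequality to the smallest eigenvalue of $A_\lambda$ being bounded below by a positive constant independent of $\lambda$.

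For the trace inequality I would split $B_\lambda=\sum_{j\ge 0}B_\lambda^{(j)}$, where $B_\lambda^{(j)}$ retains the entries with $|k-k'|\in[2^j,2^{j+1})$ (only finitely many $j$ occur since $1\le|k-k'|\le 2\lambda$). By the Schur test and Hermitian symmetry, $\|B_\lambda^{(j)}\|\le\sup_{k'\in\mathcal{S}^{d-1}_\lambda}\sum_{k\,:\,|k-k'|\in[2^j,2^{j+1})}|\widehat{\mu}_{k-k'}|\le\bigl(\sup_{|m|\in[2^j,2^{j+1})}|\widehat{\mu}_m|\bigr)\,N_\lambda(2^{j+1})$, where $N_\lambda(r):=\sup_{\xi\in\R^d}\#\bigl(\mathcal{S}^{d-1}_\lambda\cap B_r(\xi)\bigr)$ is the largest number of lattice points of $\lambda\mathbb{S}^{d-1}$ contained in a Euclidean ball of radius $r$. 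The arithmetic input is the uniform estimate, available in dimension $d\ge 5$ and reflecting the cluster structure of lattice points on spheres, that $N_\lambda(r)\lesssim r^{d-2}$ for $1\le r\le\lambda$ (consistent with $\#\mathcal{S}^{d-1}_\lambda\sim\lambda^{d-2}$ at $r=\lambda$). Summing in $j$ then gives $\|B_\lambda\|\lesssim\sum_{j\ge 0}2^{j(d-2)}\sup_{|k|\in[2^j,2^{j+1}]}|\widehat{\mu}_k|$, which is finite by \eqref{eq::mu-condition-dyadic}; hence $\sup_\lambda\|A_\lambda\|<\infty$, which is the trace inequality.

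For the observability inequality the key point I would exploit is that for $d\ge 5$ the hypothesis \eqref{eq::mu-condition-dyadic} is far stronger than it looks. Indeed it forces $|\widehat{\mu}_k|\lesssim|k|^{2-d}$, and $\sum_{k\in\Z^d}(1+|k|)^{2(2-d)}<\infty$ precisely when $d\ge 5$, so $\widehat{\mu}\in\ell^2(\Z^d)$; that is, $\dd\mu=f\dd x$ with $f\in L^2(\T^d)$, $f\ge 0$ and $\int f=1$. Consequently $|\{f>\delta\}|>0$ for some $\delta>0$, and $\int_{\T^d}|u|^2\dd\mu\ge\delta\int_{\{f>\delta\}}|u|^2\dd x$, so it suffices to invoke observability from a set of positive Lebesgue measure. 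The latter follows from the results of \cite{BurqZhu} combined with the (weak) uniform $L^p$ bounds for toral eigenfunctions supplied by $\ell^2$-decoupling \cite{BourgainDemeter}; alternatively, observability may be derived directly by running the same dimension-induction scheme as for the trace inequality, with base case $d=2$ given by Zygmund's theorem \cite{Zygmund1972}.

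The main obstacle is the uniform lattice point estimate $N_\lambda(r)\lesssim r^{d-2}$ for $1\le r\le\lambda$: this is exactly where the assumption $d\ge 5$ is essential, since in lower dimensions even the total count $\#\mathcal{S}^{d-1}_\lambda$ fluctuates and no such clean bound can hold (one then only obtains weaker, $\epsilon$-lossy statements). A secondary technical point is the behaviour at the largest relevant scale $2^j\sim\lambda$, where $B_{2^j}(\xi)$ captures a whole hemisphere of $\lambda\mathbb{S}^{d-1}$ and one must verify that the count is still $\lesssim\lambda^{d-2}$ with the correct implicit constant.
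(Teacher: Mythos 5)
Your trace argument hinges on the claim that $N_\lambda(r)\lesssim r^{d-2}$ uniformly for $1\le r\le\lambda$ when $d\ge 5$, but this clean bound is not available. The known cap estimates (Lemma~\ref{lemma:spherical-cap} in the paper, after Bourgain--Rudnick) carry an $\epsilon$-loss: for $d\ge 3$ one has $N_d(\lambda,r)\lesssim_\epsilon\lambda^\epsilon r^{d-2}$, and for $d\ge 5$ the finer bound $N_d(\lambda,r)\lesssim_{d,\epsilon}\lambda^{-1}r^{d-1}+\lambda^\epsilon r^{d-3+\epsilon}$. The first term is $\lesssim r^{d-2}$ as soon as $r\le\lambda$, but the second term is $\le r^{d-2}$ only when $r\gtrsim\lambda^{\epsilon/(1-\epsilon)}$ --- that is, at \emph{large} scales. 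At intermediate scales $1\ll r\ll\lambda^{\delta}$ the $\lambda^\epsilon$ loss survives, so your Schur test produces a bound that grows (albeit subpolynomially) in $\lambda$ and the trace inequality fails to close. This is precisely the regime the paper handles differently: via Connes' clustering lemma (Lemma~\ref{lemma:Connes}), the problematic nearby pairs all lie inside a single cluster of affine dimension $\le d-1$, and those are controlled by \emph{induction on affine dimension} rather than by a pointwise cap count. The cap estimate is invoked only at inter-cluster scales $r\gtrsim\lambda^{2/d!}$, where the $\lambda^\epsilon$ loss is absorbed by $r^{\epsilon'}$. So the genuine missing idea is the dimensional reduction: a flat Schur test cannot work with the available arithmetic input.

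Your observability argument also has a gap. The deduction $|\widehat\mu_k|\lesssim|k|^{2-d}\Rightarrow\widehat\mu\in\ell^2\Rightarrow\mu=f\dd x$ with $f\in L^2$ is correct for $d\ge 5$, but the final step---reducing to observability from $\{f>\delta\}$, a set of positive Lebesgue measure---invokes a statement that is \emph{open} for $d\ge 3$ (this is exactly the paper's Open Question (1), and the conditional route via \cite{BurqZhu} requires uniform integrability of $|u|^2$ which is itself conjectural; $\ell^2$-decoupling only gives $\lambda^\epsilon$-lossy $L^p$ bounds and does not close the loop). Nor does $f\in L^2$ put you in the Sobolev regime $W^{\epsilon,(d+1)/2}$ of Theorem~\ref{thm:sobolev}. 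Your closing remark that one could ``run the same dimension-induction scheme with base case $d=2$'' is in fact the correct route and is what the paper does: one reuses the entire inductive proof of Theorem~\ref{thm:decay-fourier}, replacing the dyadic Fourier-decay bound used on the inter-cluster error by the hypothesis~\eqref{eq::mu-condition-dyadic}, and checking (via Lemma~\ref{lem::trace-obs-low-freq-general}) that $\supp\mu$ has Hausdorff dimension $>d-1$ so that the low-frequency base case holds. You should develop that alternative rather than the $L^2$-reduction.
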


\subsubsection{Sobolev regularity}

In Section~\ref{sectionsobolev}, we turn to Sobolev functions, for which a key tool is the $\ell^2$ decoupling theorem of Bourgain and Demeter \cite{BourgainDemeter}.

\begin{theorem}
    Let $d \geq 2$. The trace and observability inequalities hold for $\mu \in \mathcal{P}(\T^d)$ if for some $\epsilon > 0$, there holds
    \begin{equation}
        \dd \mu = f \dd x,
        \quad
        f \in W^{\epsilon,\frac{d+1}{2}}(\T^d).
    \end{equation}
\end{theorem}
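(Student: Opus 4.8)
The plan is to reduce matters to the previously established Fourier-decay criterion, Theorem~\ref{thm::fourier-decay}, by showing that $f \in W^{\epsilon, \frac{d+1}{2}}(\T^d)$ forces the Fourier coefficients $\widehat\mu_k = \widehat f_k$ to be square-summable against a suitable weight—\emph{but} weighted only on dyadic blocks rather than pointwise, so the actual target is a block version of the hypothesis. First I would dyadically decompose $f = \sum_{j \ge 0} f_j$ where $f_j$ is Littlewood--Paley localized to frequencies $|k| \sim 2^j$ (with $f_0$ the low-frequency part). The Sobolev hypothesis gives $\sum_j 2^{2\epsilon j} \| f_j \|_{L^{(d+1)/2}}^2 \lesssim \| f \|_{W^{\epsilon,(d+1)/2}}^2 < \infty$ (after passing through Littlewood--Paley theory, valid since $(d+1)/2 \ge 2$ when $d \ge 3$; the case $d = 2$ needs separate treatment, see below).

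The core estimate is that for each dyadic piece, $\int |u|^2 |f_j| \dd x \lesssim 2^{j \cdot \frac{d-3}{d+1}} \|f_j\|_{L^{(d+1)/2}} \|u\|_{L^2}^2$ for $u \in E_\lambda$. This is exactly where the $\ell^2$ decoupling theorem of Bourgain--Demeter enters: by Hölder, $\int |u|^2 |f_j| \le \|u\|_{L^{d+1}}^2 \|f_j\|_{L^{(d+1)/2}}$, and decoupling on the sphere $\lambda \mathbb{S}^{d-1}$ (caps of size $\lambda^{1/2}$, or rather the integer-point clusters) yields the bound $\|u\|_{L^{d+1}(\T^d)} \lesssim_\epsilon \lambda^{\frac{d-1}{2} - \frac{d}{d+1} + \epsilon} \cdot (\text{something})$—I need to be careful here: the sharp $L^p$ bound for toral eigenfunctions at the decoupling exponent $p = \frac{2(d+1)}{d-1}$ is $\|u\|_{L^p} \lesssim \lambda^{\frac{d-1}{2} - \frac{d+1}{p}} \|u\|_{L^2} = \|u\|_{L^2}$ up to $\lambda^\epsilon$, but I want $L^{d+1}$, which by interpolation between $L^2$ and $L^{\frac{2(d+1)}{d-1}}$ sits in the range where one gets $\|u\|_{L^{d+1}} \lesssim \lambda^{\theta + \epsilon}\|u\|_{L^2}$ for an explicit $\theta = \theta(d)$. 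Crucially $\lambda^{2\theta}$ must be dominated by the frequency localization of $f_j$, i.e.\ one needs $2\theta < \frac{2(d-1)}{d+1}$ so that $|k| \sim 2^j \sim \lambda^2$ (no—$|k| \le 2\lambda$) pieces still sum; the correct comparison is that the gain $2^{\epsilon j}$ from Sobolev must beat the loss $\lambda^{2\theta}\lesssim |k_{\max}|^{\theta} \lesssim 2^{j\theta}$, which forces $\theta < \epsilon$—this cannot be right for fixed small $\epsilon$, so the real argument must instead exploit that $f_j$ localized to $|k|\sim 2^j$ only interacts with the eigenfunction through frequencies in an annulus, giving an \emph{extra} gain.

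The honest approach, then: write $\int |u|^2 f_j \dd x = \sum_{|m| \sim 2^j} \widehat f_{j,m} \widehat{|u|^2}_m$, and note $\widehat{|u|^2}_m = \sum_{k - k' = m,\ k,k' \in \mathcal S^{d-1}_\lambda} \widehat u_k \overline{\widehat u_{k'}}$. For this to be nonzero we need two lattice points on $\lambda \mathbb{S}^{d-1}$ at distance $|m| \sim 2^j$; when $2^j \gg \lambda$ this is empty, so effectively $j \lesssim \log \lambda$, and for each admissible $m$ the number of such pairs is controlled by the cluster structure of $\mathcal S^{d-1}_\lambda$ (invoked in the paper's abstract). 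By Cauchy--Schwarz in $m$ and the decoupling-type bound on the number of representations, one gets $|\int |u|^2 f_j| \lesssim A_j(\lambda) \|f_j\|_{\ell^\infty} \|u\|_{L^2}^2$ with $A_j(\lambda) = \#\{m : |m|\sim 2^j,\ \exists\, k,k'\in\mathcal S^{d-1}_\lambda,\ k-k'=m\}^{1/2} \cdot (\max \text{reps})^{1/2}$; and $\ell^\infty \hookleftarrow L^{(d+1)/2}$ with loss $2^{jd\cdot\frac{2}{d+1}}$ on the $2^{jd}$-many coefficients in the block. Summing over $j$ and using the Sobolev smallness of $\|f_j\|_{L^{(d+1)/2}}$, the series converges precisely because decoupling makes $A_j(\lambda)$ grow slower in $2^j$ than $2^{-\epsilon j}$ decay can absorb. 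The hard part—and where I expect to spend the most effort—is making this last sum rigorously convergent uniformly in $\lambda$: one must combine (i) the decoupling inequality to bound mixed-frequency interactions, (ii) sharp counting of lattice-point pairs on spheres at a prescribed gap, and (iii) the Sobolev embedding $W^{\epsilon,(d+1)/2} \hookrightarrow$ (weighted $\ell^2$ on dyadic blocks), reconciling the exponent $\frac{d+1}{2}$ with the critical decoupling exponent $\frac{2(d+1)}{d-1}$ via Hölder in a way that leaves a genuine $\epsilon$-power of room. For $d=2$ the exponent $\frac{d+1}{2} = \frac32 < 2$ so Littlewood--Paley square-function theory fails, and one should instead invoke Zygmund's $L^2$ theorem directly together with an interpolation/bootstrap, or observe that $W^{\epsilon,3/2}(\T^2) \hookrightarrow L^2(\T^2)$ by Sobolev embedding and apply Zygmund; the observability half in every dimension follows from the trace half by the general principle of \cite{BurqZhu} cited in the state-of-the-art discussion, or by a direct argument exhibiting that $f$ being a genuine (not merely $L^1$) density with Sobolev regularity precludes concentration of $\|u\|_{L^2(\mu)}^2$ away from a fixed proportion of $\|u\|_{L^2}^2$.
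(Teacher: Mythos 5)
Your Littlewood--Paley plus H\"older plus decoupling strategy is the natural first try, and you correctly identify where it breaks: decoupling gives $\|u\|_{L^{2(d+1)/(d-1)}}\lesssim_\delta \lambda^\delta\|u\|_{L^2}$ with a loss that is fixed in $\lambda$, while the Sobolev gain $2^{-j\epsilon_0}$ only decays in the frequency block index $j$. For the low-frequency blocks ($2^j = O(1)$) there is nothing to absorb $\lambda^\delta$, so the sum over $j$ does not give a bound uniform in $\lambda$. You diagnose this and gesture at the cluster structure and the representation count for $k-k'=m$, but you do not identify the two ingredients that actually close the argument.

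The paper's proof proceeds differently. It first decomposes $\mathcal{S}^{d-1}_\lambda$ (or, in the induction, the lattice points on a lower-dimensional sphere supporting $\widehat u$) into Connes clusters $\Omega_\alpha$, pairwise separated by $N \sim \rho^{2/(n+2)!}$, where $\rho$ is the radius of that sphere. It then splits the density once, at the cluster scale: $g = \mathbb{P}_N g + (1-\mathbb{P}_N) g$. The low-frequency part $\mathbb{P}_N g$ couples only frequencies at distance $\le N$, hence only within a single cluster, and since each cluster lies in an affine subspace of strictly smaller dimension, the within-cluster bilinear form is controlled by the \emph{inductive hypothesis} (with base case $n=1$ handled by Lemma~\ref{lemma:delta-measure}). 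No decoupling is used there, so no $\lambda^\delta$ loss appears. Decoupling is applied only to the tail $(1-\mathbb{P}_N)g$, where the Sobolev bound $\|(1-\mathbb{P}_N)g\|_{L^{(d+1)/2}} \lesssim N^{-\epsilon}\|g\|_{W^{\epsilon,(d+1)/2}}$ supplies a genuine negative power of $N$ (hence of $\lambda$), beating the $N^{\epsilon/2}$ decoupling loss by choosing the decoupling exponent small relative to $\epsilon$. It is this cluster-scale split plus dimension induction, rather than a full Littlewood--Paley decomposition of $g$, that removes the low-frequency obstruction you ran into.

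A secondary error: your $d=2$ fallback via Sobolev embedding is wrong. For small $\epsilon$, $W^{\epsilon,3/2}(\T^2)$ embeds into $L^q$ only for $q$ close to $3/2$, not into $L^2$, so Zygmund's $L^2$ criterion is not reached this way. The paper's two-dimensional argument instead uses Jarn\'ik's lemma directly: the Sobolev regularity yields a polynomial gain in the separation scale, while $\#\mathcal{S}^1_\lambda$ is subpolynomial, so the cross-cluster error term vanishes; this works for $W^{\epsilon,p}$ with any $p\ge 1$.
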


When $d=2$, the condition may be relaxed to $f \in W^{\epsilon,p}(\T^d)$ for any $p \ge 1$. In applications, it is often most relevant to choose $f = \mathbf{1}_E$ in the above theorem, and the question becomes to characterize sets $E$ such that $\mathbf{1}_E \in W^{\epsilon,p}(\T^d)$. It is explored in Appendix \ref{sec:appendixA}.

\subsection{Notations}
\label{sec::notations}

The following notations are used throughout the paper.
\begin{itemize}
    \item For any set of parameters $A$, we write $f \lesssim_A g$ resp.\ $f \gtrsim_A g$ if, for some finite constant $C>0$ depending solely on $A$, the inequality $f \le C g$ resp.\ $f \ge C g$ holds.
    We write $f \sim_A g$ if both $f \lesssim_A g$ and $f \gtrsim_A g$ hold.
    We will also use $C_A$, without further specification, to denote a finite and strictly positive constant depending solely on $A$.
    Therefore, the relations $f \lesssim_A g$ and $f \gtrsim_A g$ are respectively equivalent to $f \le C_A g$ and $f \ge C_A g$.
    \item We will use $\|\cdot\|_{L^2(\dd \mu)}$ to denote the $L^2$-norm with respect to the measure $\mu$.
    We will use $\|\cdot\|_{L^2}$ to denote the $L^2$-norm with respect to the Lebesgue measure on the torus $\T^d$.
    \item For any distribution $u$ on $\T^d$, we denote by $\widehat{u}_k$ ($k \in \Z^d$) its Fourier coefficients:
    \begin{equation*}
        \widehat{u}_k = \int_{\T^d} e^{-2\pi i k\cdot x} u(x) \dd x.
    \end{equation*}
\end{itemize}

\section*{Acknowledgements}

The research of Nicolas Burq has received funding from the European Research Council (ERC) under the
European Union's Horizon 2020
research and innovation programme (Grant agreement 101097172 - GEOEDP). 
Pierre Germain was supported by the
Simons Foundation Collaboration on Wave Turbulence, a start up grant from Imperial College and a Wolfson fellowship. 
Massimo Sorella acknowledges support from the Chapman Fellowship at Imperial College London and would like to thank Luigi De Rosa for insightful discussions.
Hui Zhu was partially supported by the
Simons Foundation Collaboration on Wave Turbulence.

\section{Perspectives}

\label{sectionperspectives}

\subsection{Implications of the  trace and observability inequalities}

The trace and observability inequalities studied here are not only intrinsically interesting but also have various implications.

\subsubsection{Cantor--Lebesgue theorems and spherical summation of Fourier series} 

If $E$ is a measurable subset of $\T^d$,  observability for the measure $\mathbf{1}_E \dd x$ is defined as the following property
\begin{equation} \label{O} \tag{O}
    \sum_{|k|^2=n} |c_k|^2 \lesssim \biggl\| \sum_{|k|^2=n} c_k e^{2\pi i k \cdot x} \biggr\|_{L^2(E)}^2, \quad \forall n \in \NN
\end{equation}

We saw that this statement was known if $d=2$ and $E$ is Borel with positive measure \cite{Zygmund1972} or $d \geq 3$ and $E$ is open (and nonempty) \cite{Connes1976coefficients}. 
It is also true if $\mathbf{1}_E$ is in $W^{\epsilon,\frac{d+1}{2}}(\T^d)$ as proved in the present article. For completeness, we give a non-trivial example of such a set in Appendix \ref{sec:appendixA}, see also \cite{Lomb19} for the proof of the von Koch snowflake $S \subset \R^2$ such that $\mathbf{1}_S \in W^{s,1 } (\R^2)$ for any $s \in \bigl(0, 2 - \frac{\ln 4}{\ln 2} \bigr)$.
The observability \eqref{O} has the following corollary
\begin{equation*} 
    \biggl\| \sum_{|k|^2=n} c_k e^{2\pi i k \cdot x} \biggr\|_{L^2(E)} \overset{n \to \infty}{\longrightarrow} 0
    \quad\implies\quad
    \sum_{|k|^2=n} |c_k|^2 \overset{n \to \infty}{\longrightarrow} 0
\end{equation*}

Cantor--Lebesgue theorems refer to a pointwise version of the above, namely
\begin{equation} \label{CL} \tag{CL}
    \sum_{|k|^2=n} c_k e^{2\pi i k \cdot x} \overset{n \to \infty}{\longrightarrow} 0,\ \forall x \in E
    \quad
    \implies
    \quad
    \sum_{|k|^2=n} |c_k|^2 \overset{n \to \infty}{\longrightarrow} 0.
\end{equation}
As observed in \cite{Zygmund1972}, Egorov's theorem implies that condition \eqref{O}, when satisfied on measurable sets of positive Lebesgue measure, yields \eqref{CL} for such sets. In particular, when $d=2$, condition \eqref{CL} holds for every set with positive Lebesgue measure. Separately, Connes \cite{Connes1976coefficients} applied Baire's category theorem to show that \eqref{O} on open sets implies \eqref{CL} for open sets. Thus, if 
$d\ge 3$, then \eqref{CL} holds for all nonempty open sets.

Cantor--Lebesgue theorems are a crucial step in proving uniqueness of Fourier series by spherical summation. 
A key result in this theory is the following
\begin{equation*}
    \lim_{n \to \infty} \sum_{|k|^2 \leq n} c_k e^{2\pi i k \cdot x} \overset{n \to \infty}{\longrightarrow} f(x),\ \forall x \in \T^d
    \quad
    \implies
    \quad
    c_k = \widehat{f}_k,\ \forall k \in \Z^d.
\end{equation*}
under the assumption that $f \in L^1(\T^d)$ and everywhere finite.
This is due to Ash and Wang \cite{AshWang2000} following Bourgain \cite{Bourgain1996}. How much the assumption \emph{``for all $x \in \T^d$''} can be relaxed to \emph{``for all $x \in E$''}, for some set $E$, is not well-understood; the complements of such sets $E$ are known as \emph{sets of uniqueness} for spherical summation.

We refer to the reviews \cite{Cooke2,AshWang2007} for more on Cantor--Lebesgue theorems and uniqueness of Fourier series by spherical summation.

\subsubsection{Trace and observability for the Schr\"odinger equation on the torus}

By using orthogonality in time of $e^{i 4\pi^2 n t}$ with $n \in \NN$, it follows that (see e.g., \cite{GermainMoyanoZhu2024vanishing,BurqZhu}):
\begin{equation}
    \mathfrak{c}_\mu^{-1}\| u_0\|_{L^2}^2 
    \leq \int_{0}^{1} \int_{\T^d} |e^{it\Delta/(2\pi)} u_0 |^2 \dd \mu \dd t 
    \leq \mathfrak{C}_\mu \| u_0\|_{L^2}^2.
\end{equation}
If $\dd \mu = \mathbf{1}_E \dd x$, then the Hilbert Uniqueness Method of Lions \cite{Lions} gives the equivalence of the exact controllability of the Schr\"odinger equation from $E$ with the non-vanishing of the constant on the left-hand side. We refer to \cite{BurqZhu} for an overview of the question of the observability of the Schr\"odinger equation on the torus.

\subsubsection{Quantum measures} 

By definition, a quantum measure on $\T^d$ is a  weak-$*$ limit, in the space of Radon measures, of a sequence $|u_n|^2 \dd x$, where $u_n \in E_{\lambda_n}$ are $L^2$-normalized and $\lambda_n \to \infty$ as $n \to \infty$.
On $\T^d$, quantum measures are absolutely continuous with respect to the Lebesgue measure due to Bourgain, see \cite{Jakobson1997quantum} where additional properties are also proved. It is natural to ask whether trace and observability estimates for eigenfunctions have a counterpart for quantum measures. Formally, for a quantum measure $f$, it is tempting to write 
\begin{equation} 
\label{eq:quantum-inequality}
    \mathfrak{c}_\mu^{-1} \le \liminf_{\sqrt{\NN} \ni \lambda \to \infty} \inf_{u \in E_\lambda} \frac{\int_{\T^d} |u|^2 \dd \mu}{\int_{\T^d} |u|^2 \dd x}
    \leq \int_{\T^d} f \dd \mu  
    \leq \limsup_{\sqrt{\NN} \ni \lambda \to \infty} \sup_{u \in E_\lambda} \frac{\int_{\T^d} |u|^2 \dd \mu}{\int_{\T^d} |u|^2 \dd x} \le \mathfrak{C}_\mu,
\end{equation}
but this remains to be justified carefully --- even giving a meaning to $\int_{\T^d} f \dd \mu$ is not obvious if $\dd \mu$ is not given by an $L^\infty$ density. This can be done in the framework which we now describe.

Suppose that $X$ is a Banach space of distributions on $\T^d$. We say that the space $X$ satisfies the {\em uniform trace inequality} if for all $u \in \bigcup_{\lambda \in \sqrt{\NN}} E_\lambda$ and $\varphi \in X$, there holds
\begin{equation}
    |\langle |u|^2, \varphi \rangle| \lesssim_X  \| \varphi \|_X.
\end{equation}
By this and using $C^\infty(\T^d) \subset X'$, a sequence of $L^2$-normalized eigenfunctions $(u_n)_{n}$ satisfies $\| |u_n|^2 \|_{X'} \lesssim 1$.
By the Banach--Alaoglu theorem, the sequence $( |u_n|^2 )_{n}$ converges, up to a subsequence, with respect to the weak-$*$ topologies in both $X'$ and the space of Radon measures on $\T^d$ to some $f \in L^1$.
This is the density of a quantum measure.

This proves the validity of \eqref{eq:quantum-inequality} when $\mu \in X$, for in this case one has
\begin{equation}
    \langle f, \mu \rangle = \lim_{n \to \infty} \int_{\T^d} |u_n|^2 \dd \mu.
\end{equation}

An explicit example of $X$ is given by $X = W^{\epsilon,\frac{d+1}{2}}(\T^d)$ (Theorem \ref{thm:sobolev}).
A natural function space for which the uniform trace inequality holds is the completion of $C^\infty(\T^d)$ with respect to the norm $\lvert\lvert\lvert \varphi \rvert\rvert\rvert = \mathfrak{C}_\varphi$. See \cite{BurqZhu} for further developments of this idea.

\subsection{Open questions}
\label{openquestions}

We gather here some open questions if $d \geq 3$ in increasing order of difficulty and describe how they relate to each other.
\begin{enumerate}
    \item Does the observability inequality hold on $\T^d$ ($d \ge 3$) if $\dd \mu = \mathbf{1}_E \dd x$, where $E$ is a set of positive Lebesgue measure?
    \item Does the trace inequality hold on $\T^d$ for the superficial measure of
    \begin{enumerate}
        \item a smooth hypersurface if $d \geq 4$ (conjectured in \cite{BourgainRudnick2012}); or
        \item a smooth surface of codimension two if $d \geq 5$?
    \end{enumerate}
    \item Same question for the observability inequality under an appropriate curvature condition.
    \item Does there exist a \emph{strict Young function} $F$ (which is by definition a positive, finite, convex and superlinear function) such that for all $L^2$-normalized $u \in \bigcup_{\lambda \in \sqrt{\NN}} E_\lambda$,
    \begin{equation*}
    \label{eq::uniform-integrability}
        \int_{\T^d} F(|u|^2) \dd x \lesssim 1?
    \end{equation*}
    The validity of this statement (for some $F$) is conjectured in \cite{BurqZhu} and implies the existence of another strict Young function $G$ such that the trace inequality holds for all measures of the type $\dd \mu = f \dd x$ provided that
    \begin{equation*}
        \int_{\T^d} G(f) \dd x < \infty.
    \end{equation*}
    Indeed, letting $G$ be the Legendre transforms of $F$, then Young's inequality gives:
    \begin{equation*}
        \int_{\T^d} |u|^2 f \dd x 
        \leq \int_{\T^d} F(|u|^2) \dd x 
        + \int_{\T^d} G(f) \dd x.
    \end{equation*}
    \item Does the trace inequality hold on $\T^d$ for measures of the type $f \dd x$ with $f \in L^p(\T^d)$ with $p > \frac d2$? This was conjectured in \cite{Bourgain1993,Bourgain2001}.
    \item Does the trace inequality hold on $\T^d$ for general $(d-2+\epsilon)$-regular measures?  This question arises naturally in light of the results obtained in the present paper. 
\end{enumerate}

The implications between positive answers to the above questions are as follows
\begin{equation*}
    (6) \implies (5) \implies (4) \implies (1) \qquad \mbox{and} \qquad (6) \implies (2).
\end{equation*}
The only nontrivial implication is $(4) \implies (1)$ which is proved in \cite{BurqZhu}.
Question (6) is the most difficult. 
It would already be very interesting to have more non-trivial examples of $(d-2+\epsilon)$-regular measures for which the trace inequality holds. The only example we currently know of is provided by Theorem \ref{thm:decay-fourier}, see Remark \ref{littleremark}

\section{Preliminaries}
\label{sec:prel}

\subsection{Classical definitions}

Let $\mathcal{M}(\T^d)$ be the space of signed Radon measures on $\T^d$.
By the Riesz representation theorem, we have the duality $\mathcal{M}(\T^d) \simeq C(\T^d)'$ where $C(\T^d)$ is the space of continuous \emph{real-valued} functions on $\T^d$.
The set of Borel probability measures on $\T^d$ is
\begin{equation*}
    \mathcal{P}(\T^d) \coloneqq \bigl\{ \mu \in \mathcal{M}(\T^d) : \mu \geq 0,\ \mu(\T^d) = 1 \bigr\}.
\end{equation*}

For $E \subset \T^d$ and $s \geq 0$, the $s$-dimensional Hausdorff measure of $E$ is defined by
\begin{equation*}
    \mathcal{H}^s(E) \coloneqq \sup_{\delta >0} \inf \biggl\{ \sum_\alpha (\operatorname{diam} U_\alpha)^s : E \subset \bigcup_\alpha U_\alpha,\ \operatorname{diam} U_\alpha < \delta \biggr\}.
\end{equation*}
The \emph{Hausdorff dimension} of $E$ is then defined by
\begin{equation*}
    \dim_{\mathrm{H}}(E) \coloneqq \inf \{ s \geq 0 : \mathcal{H}^s(E) = 0 \}.
\end{equation*}

Let $N_\delta(E)$ denote the minimal number of closed balls of radius $\delta > 0$ needed to cover $E$. Then the \emph{upper Minkowski dimensions} is defined as
\begin{equation*}
    \overline{\dim}_{\mathrm{M}}(E) \coloneqq \limsup_{\delta \to 0} \frac{\log N_\delta(E)}{-\log \delta}.
\end{equation*}

\subsection{Elementary properties of the trace and observability inequalities}

The following lemma gives the basic structures of the set of measures satisfying the trace and observability inequalities respectively.
Its proof is straightforward and will be omitted.
For convenience in the statements, we extend the trace and observability inequalities to general Borel measures in an obvious way (i.e.\ we abandon the requirement that the measure be a probability measure).

\begin{lemma} 
    Let $\mu,\nu$ be positive Borel measures on $\T^d$. Then
    \begin{itemize}
        \item $\mathfrak{C}_\mu \geq |\mu|$ and $\mathfrak{c}_\mu \geq |\mu|^{-1}$. 
        \smallskip
        \item $\mathfrak{C}_\mu \leq \mathfrak{C}_\nu$ and $\mathfrak{c}_\mu \geq \mathfrak{c}_\nu$ if $\mu \leq \nu$.
        \smallskip
        \item $\mathfrak{C}_{a \mu + b \nu} \leq a \mathfrak{C}_{\mu} + b \mathfrak{C}_\nu$ and $\mathfrak{c}_{a \mu + b \nu}^{-1} \leq  a \mathfrak{c}_{ \mu}^{-1} + b \mathfrak{c}_\nu^{-1} $ if $a,b \geq 0$. 
        \smallskip
        \item $\mathfrak{C}_{T_{x_0} \# \mu} = \mathfrak{C}_\mu$ and $\mathfrak{c}_{T_{x_0} \# \mu} = \mathfrak{c}_\mu$ where $T_{x_0}$ is the translation by $x_0 \in \R^d$.
        \smallskip
        \item $\mathfrak{C}_{\mu * \nu} \leq \min(|\nu| \mathfrak{C}_{\mu}, |\mu| \mathfrak{C}_{\nu})$ and $\mathfrak{c}_{\mu * \nu}^{-1} \ge \min(|\nu|\mathfrak{c}_{\mu}^{-1},|\mu|\mathfrak{c}_{\nu}^{-1})$.
\end{itemize}
\end{lemma}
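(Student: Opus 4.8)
The plan is to work throughout with the two variational identities
\begin{equation*}
    \mathfrak{C}_\mu = \sup_{0\neq u\in\bigcup_{\lambda\in\sqrt{\NN}}E_\lambda} \frac{\int_{\T^d}|u|^2\dd\mu}{\int_{\T^d}|u|^2\dd x},
    \qquad
    \mathfrak{c}_\mu = \sup_{0\neq u\in\bigcup_{\lambda\in\sqrt{\NN}}E_\lambda} \frac{\int_{\T^d}|u|^2\dd x}{\int_{\T^d}|u|^2\dd\mu},
\end{equation*}
which are immediate from the definition \eqref{eq::trace-obs-ineq-def} (with the convention $\mathfrak{c}_\mu=+\infty$ when $\int_{\T^d}|u|^2\dd\mu=0$ for some admissible $u\neq0$). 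Two elementary features of the test class $\bigcup_\lambda E_\lambda$ will be used repeatedly: it is invariant under every translation $T_{x_0}$, since $e^{2\pi ik\cdot(x+x_0)}=e^{2\pi ik\cdot x_0}e^{2\pi ik\cdot x}$ shows each $E_\lambda$ is; and both $\|\cdot\|_{L^2}$ and the Lebesgue measure are translation invariant.

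First I would obtain the lower bounds $\mathfrak{C}_\mu\geq|\mu|$ and $\mathfrak{c}_\mu\geq|\mu|^{-1}$ by testing on a single character $u=e^{2\pi ik\cdot x}\in E_{|k|}$ ($k\in\Z^d$): here $|u|^2\equiv1$, so $\int_{\T^d}|u|^2\dd\mu=\mu(\T^d)=|\mu|$ and $\int_{\T^d}|u|^2\dd x=1$, and the two ratios above equal $|\mu|$ and $|\mu|^{-1}$ respectively. The monotonicity and additivity items then reduce to the corresponding pointwise facts about the measures: $\mu\leq\nu$ gives $\int|u|^2\dd\mu\leq\int|u|^2\dd\nu$ for every $u$, and $\int|u|^2\dd(a\mu+b\nu)=a\int|u|^2\dd\mu+b\int|u|^2\dd\nu$; inserting these into the two suprema above and using that $\sup$ is monotone and sub-additive (while $\inf$ is monotone and super-additive) on the normalized ratios yields at once $\mathfrak{C}_\mu\leq\mathfrak{C}_\nu$, $\mathfrak{c}_\mu\geq\mathfrak{c}_\nu$, and the two $(a\mu+b\nu)$-estimates. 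For the translation item I would use the change of variables $\int_{\T^d}|u|^2\dd(T_{x_0}\#\mu)=\int_{\T^d}|u\circ T_{x_0}|^2\dd\mu$ together with $u\circ T_{x_0}\in E_\lambda$ and $\|u\circ T_{x_0}\|_{L^2}=\|u\|_{L^2}$, so that the supremum defining each constant is taken over exactly the same set of values for $\mu$ and for $T_{x_0}\#\mu$.

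The convolution bounds are the only part requiring a (routine) interchange of integrals. Starting from $\int_{\T^d}f\dd(\mu*\nu)=\int_{\T^d}\int_{\T^d}f(x+y)\dd\mu(x)\dd\nu(y)$ for nonnegative Borel $f$ — legitimate by Tonelli since $\mu,\nu$ are finite and nonnegative — I would apply it with $f=|u|^2$, $u\in E_\lambda$: for each fixed $y$ the translate $x\mapsto u(x+y)$ lies in $E_\lambda$ with the same $L^2$ norm as $u$, so the inner integral is $\leq\mathfrak{C}_\mu\|u\|_{L^2}^2$ (resp.\ $\geq\mathfrak{c}_\mu^{-1}\|u\|_{L^2}^2$); integrating in $y$ against $\nu$ produces a factor $|\nu|$, giving $\mathfrak{C}_{\mu*\nu}\leq|\nu|\mathfrak{C}_\mu$ (resp.\ $\mathfrak{c}_{\mu*\nu}^{-1}\geq|\nu|\mathfrak{c}_\mu^{-1}$), and swapping the roles of $\mu$ and $\nu$ gives the bounds with $|\mu|\mathfrak{C}_\nu$ resp.\ $|\mu|\mathfrak{c}_\nu^{-1}$; taking the smaller of the two finishes the proof. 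I do not expect any genuine obstacle here — the statement is elementary — the only points deserving a moment's care being the push-forward and convolution conventions and the fact that the interchange of integrals is justified purely by nonnegativity of the integrand.
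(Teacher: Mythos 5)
Your approach, working directly with the variational identities $\mathfrak{C}_\mu = \sup_u R_\mu(u)$ and $\mathfrak{c}_\mu^{-1} = \inf_u R_\mu(u)$ where $R_\mu(u) = \int_{\T^d}|u|^2\dd\mu/\int_{\T^d}|u|^2\dd x$, is correct and is the natural route to all five items; the paper declares the proof ``straightforward'' and omits it, so there is no argument to compare against. However, your own reasoning exposes a sign discrepancy in item 3 that you did not flag: super-additivity of $\inf$ gives
\begin{equation*}
    \mathfrak{c}_{a\mu+b\nu}^{-1} = \inf_u\bigl(aR_\mu(u)+bR_\nu(u)\bigr) \geq a\inf_u R_\mu(u)+b\inf_u R_\nu(u) = a\mathfrak{c}_\mu^{-1}+b\mathfrak{c}_\nu^{-1},
\end{equation*}
which is the \emph{reverse} of the $\leq$ printed in the lemma. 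The $\geq$ you derive is the correct one. An equivalent, even more elementary argument: if $c\int|u|^2\dd x\leq\int|u|^2\dd\mu$ and $c'\int|u|^2\dd x\leq\int|u|^2\dd\nu$ hold for all admissible $u$, then $(ac+bc')\int|u|^2\dd x\leq\int|u|^2\dd(a\mu+b\nu)$, so $a\mathfrak{c}_\mu^{-1}+b\mathfrak{c}_\nu^{-1}$ is an admissible lower observability constant for $a\mu+b\nu$; this is also the direction consistent with item 5, which correctly carries a $\geq$. The $\leq$ in the lemma is almost certainly a typographical error, and you should say so explicitly rather than assert that your super-additivity argument ``yields at once'' the printed estimate. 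A second, minor, point: in item 5 your two one-sided bounds actually give $\mathfrak{c}_{\mu*\nu}^{-1}\geq\max(|\nu|\mathfrak{c}_\mu^{-1},|\mu|\mathfrak{c}_\nu^{-1})$, strictly stronger than the stated $\geq\min$; ``taking the smaller of the two'' is the right move only on the $\mathfrak{C}$ side, where one keeps the better of two upper bounds, while on the $\mathfrak{c}^{-1}$ side the stated $\min$ is simply a weaker corollary of what your computation proves.
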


\subsection{Some results on lattice points on spheres}

In the remaining of the paper, we denote
\begin{equation}
    N_d(\lambda) = \# \mathcal{S}^{d-1}_\lambda,
    \qquad
    N_d(\lambda,r) = \sup_{x_0 \in \R^d} \# \bigl( \mathcal{S}^{d-1}_\lambda \cap B_r(x_0) \bigr).
\end{equation}
In other words $N_d(\lambda)$ is the number of integer points on $\lambda \mathbb{S}^{d-1}$ and $N_d(\lambda,r)$ is the largest possible number of integer points on spherical caps of radius $\le r$ on $\lambda \mathbb{S}^{d-1}$.

The following lemma will be useful.

\begin{lemma}[Sphere with more points than average]
\label{lem::lattice-number-lower-bound}
    For any $R>0$, there exists $\lambda \in [R-1,R+1] \cap \sqrt{\NN}$ such that $N_d(\lambda) \gtrsim_d \lambda^{d-2}$.
\end{lemma}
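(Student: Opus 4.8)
The plan is to use a counting/averaging argument over spheres of radius in a window, combined with a pigeonhole principle. Let $R>0$ be given. Consider the annulus $A_R = \{ x \in \R^d : R-1 \le |x| \le R+1 \}$. On one hand, the number of integer points in this annulus is, by a standard volume comparison (each unit cube centered at a lattice point in $A_R$ is contained in a slightly fattened annulus, and conversely), comparable to the volume of the annulus, which is $\sim_d R^{d-1}$ for $R \ge 1$ (the thickness is $2$ and the sphere has surface area $\sim_d R^{d-1}$); for small $R$ one can just take $\lambda$ near $R$ directly, so assume $R$ large. On the other hand, the integer points in $A_R$ are partitioned according to the value of $|k|^2 \in \NN$, and $|k|^2$ ranges over integers in an interval of length $\sim R$. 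Thus there are $\lesssim_d R$ possible values of $\lambda^2 = |k|^2$ with $\lambda \in [R-1,R+1] \cap \sqrt{\NN}$.

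The key step is then the pigeonhole: since $\gtrsim_d R^{d-1}$ lattice points are distributed among $\lesssim_d R$ spheres $\mathcal{S}^{d-1}_\lambda$ with $\lambda \in [R-1,R+1]$, at least one such sphere must carry $\gtrsim_d R^{d-1}/R = R^{d-2}$ of them. For that value of $\lambda$ we get $N_d(\lambda) \gtrsim_d R^{d-2} \gtrsim_d \lambda^{d-2}$, using that $\lambda \sim R$ on this window. This produces the desired $\lambda \in [R-1,R+1] \cap \sqrt{\NN}$.

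The one point requiring a little care — and the main (mild) obstacle — is the lower bound on the total number of lattice points in the annulus $A_R$: one needs to know that the annulus of inner radius $R-1$ and outer radius $R+1$ genuinely contains $\gtrsim_d R^{d-1}$ lattice points and is not, say, anomalously empty. This follows from the classical estimate $\#\{k \in \Z^d : |k| \le \rho\} = \omega_d \rho^d + O_d(\rho^{d-1})$ (with $\omega_d$ the volume of the unit ball), applied at $\rho = R+1$ and $\rho = R-1$ and subtracted: the main terms differ by $\omega_d((R+1)^d - (R-1)^d) \sim_d R^{d-1}$, which dominates the $O_d(R^{d-1})$ error terms once $R$ is large enough (depending only on $d$). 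For $R$ bounded by this threshold the statement is trivial since one may simply pick any $\lambda \in [R-1,R+1]\cap\sqrt{\NN}$ and absorb the constant. Alternatively, and perhaps more cleanly, one can avoid lattice-point asymptotics entirely by a direct construction: fix $k_0 \in \Z^{d-1}$ with $|k_0| \le R$ and let the last coordinate range; for each integer $m$ with $|m|^2 \le R^2 - |k_0|^2$ one gets a lattice point on the sphere of radius $\sqrt{|k_0|^2 + m^2}$, and choosing $k_0$ with $|k_0|^2 \in [R^2 - CR, R^2]$ for suitable $C=C_d$ forces all these radii into $[R-1,R+1]$; counting gives $\gtrsim_d R^{1/2}$ points on a single such sphere, which already suffices in dimension $d=2$ but must be upgraded by iterating over the first $d-1$ coordinates to reach the full $R^{d-2}$ in higher dimensions — at which point one is back to the volume-counting argument above, so the asymptotic formula is really the efficient route.
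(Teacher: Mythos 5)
Your proof is correct and is exactly the pigeonhole argument the paper invokes (the paper's own proof is a one-liner, "immediate consequence of the pigeonhole principle"); you have simply spelled out the details: $\gtrsim_d R^{d-1}$ lattice points in the annulus distributed over $\lesssim R$ possible values of $|k|^2$. The closing tangent about a direct construction is unnecessary, but the main argument matches the intended one.
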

\begin{proof}
    This is an immediate consequence of the pigeonhole principle.
\end{proof}

Next, we turn to classical results on the number of lattice points on spheres centered at $0$, for which we refer to \cite{Grosswald, IwaniecKowalski,Bateman} and references therein.

\begin{theorem}[Cardinality of $\mathcal{S}^{d-1}_\lambda$]
\label{thm::cardinality-lattice-sphere}
    The following estimates hold:
    \begin{itemize}
        \item $N_2(\lambda) \lesssim \exp\bigl\{C \ln \lambda/\ln \ln \lambda\bigr\}$ for some $C>0$.
        \smallskip
        \item $N_3(\lambda) \lesssim \lambda \ln\lambda \ln\ln\lambda$ and $\limsup_{\lambda \to \infty} N_3(\lambda) / (\lambda \ln \ln \lambda) > 0$.
        \smallskip
        \item $0 < N_4(\lambda) \lesssim \lambda^2 \ln\ln\lambda $ and $\limsup_{\lambda \to \infty} N_4(\lambda) / (\lambda \ln \ln \lambda) > 0$.
        \smallskip
        \item $\mathcal{N}_d(\lambda) \sim_d \lambda^{d-2}$ for all $d \ge 5$.
    \end{itemize}
\end{theorem}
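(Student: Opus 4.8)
The plan is to translate everything into statements about the counting function $r_d(n) \coloneqq \#\{x\in\Z^d : x_1^2+\cdots+x_d^2 = n\}$, since $N_d(\lambda) = r_d(\lambda^2)$. All four bullets then become classical facts about $r_d$, and I would organise the argument by the size of $d$: for $d=2,3,4$ there are exact formulas for $r_d$ in terms of multiplicative functions, whereas for $d\ge 5$ one invokes the Hardy--Littlewood circle method. I would take as known the underlying arithmetic inputs --- the maximal orders of the divisor function (Wigert) and of $\sigma$ (Gronwall), Dirichlet's class number formula together with the elementary bound $L(1,\chi)\ll\log q$, and the circle method with the size of its singular series; see \cite{Grosswald,IwaniecKowalski,Bateman}.

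The cases $d=2$ and $d=4$ are immediate. For $d=2$, the two-squares identity $r_2(n) = 4\sum_{m\mid n}\chi_{-4}(m)$ gives $r_2(n)\le 4\,d(n)$ with $d$ the divisor function, and Wigert's bound $\log d(n)\le(\log 2+o(1))\log n/\log\log n$ yields $N_2(\lambda)=r_2(\lambda^2)\lesssim\exp\{C\ln\lambda/\ln\ln\lambda\}$ for a suitable $C$. For $d=4$, Jacobi's four-squares theorem $r_4(n)=8\sum_{m\mid n,\,4\nmid m}m$ gives at once $r_4(n)\ge 8$ (so $N_4(\lambda)>0$) and $r_4(n)\le 8\sigma(n)$; Gronwall's theorem $\limsup_n\sigma(n)/(n\log\log n)=e^\gamma$ then produces the upper bound $N_4(\lambda)\lesssim\lambda^2\ln\ln\lambda$, and letting $\lambda^2$ run over the primorials $\prod_{2<p\le y}p$ (admissible since then $\lambda\in\sqrt\NN$ and $\lambda^2$ is odd) gives the matching $\limsup$: there $r_4(\lambda^2)=8\sigma(\lambda^2)=8\lambda^2\prod_{2<p\le y}(1+1/p)\asymp\lambda^2\log y\asymp\lambda^2\ln\ln(\lambda^2)$ by Mertens' theorem, which is stronger than the stated assertion.

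The delicate case is $d=3$. For the upper bound I would use Gauss's three-squares formula, which (after iterating $r_3(n)=r_3(n/4)$ while $4\mid n$) writes $r_3(n)$ as a bounded multiple of the Hurwitz class number $H(m)$ of the resulting $4$-free integer $m\le n$, and as $0$ when $m\equiv 7\pmod 8$. Writing $H(m)$ as a sum of ordinary class numbers $h(-m/f^2)$ over $f$ with $f^2\mid m$, bounding each by $h(-D)=\tfrac{w\sqrt D}{2\pi}L(1,\chi_{-D})$ together with $L(1,\chi_{-D})\ll\log D$ (Pólya--Vinogradov plus partial summation), and summing, I obtain
\[
  H(m) \ll \sqrt m\,\log m\sum_{f^2\mid m}\frac1f
  \le \sqrt m\,\log m\,\frac{\sigma(m)}{m}
  \ll \sqrt m\,\log m\,\ln\ln m
  \le \sqrt n\,\log n\,\ln\ln n,
\]
hence $N_3(\lambda)=r_3(\lambda^2)\lesssim\lambda\ln\lambda\ln\ln\lambda$. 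For the lower bound I would invoke the classical construction (Chowla) of negative discriminants with anomalously large $L$-value: choosing $n$ squarefree, $n\equiv 1\pmod 4$, along a suitable arithmetic progression so that $\chi_{-4n}(p)=+1$ for every prime $p\le c\log n$, the Euler product gives $L(1,\chi_{-4n})\gg\prod_{p\le c\log n}(1-1/p)^{-1}\gg\ln\ln n$ once the tail $\prod_{p>c\log n}(1-\chi_{-4n}(p)/p)^{-1}$ is kept $\gg 1$ by a character-sum (Pólya--Vinogradov or Burgess) estimate; for such $n$ one has $r_3(n)\gtrsim H(n)\gtrsim\sqrt n\,\ln\ln n$, which gives $\limsup_{\lambda\to\infty}N_3(\lambda)/(\lambda\ln\ln\lambda)>0$.

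For $d\ge 5$ the circle method gives $r_d(n)=\frac{\pi^{d/2}}{\Gamma(d/2)}\,\mathfrak S_d(n)\,n^{d/2-1}$ plus an error of strictly smaller order, where $\mathfrak S_d(n)=\prod_p\sigma_p(n)$ is the singular series. The point I would stress --- and the reason $d\ge5$ is clean whereas $d=4$ is not --- is that for $d\ge5$ each local factor at an odd prime is $1+O(p^{-(d-1)/2})$ with exponent $>1$, so the product converges absolutely to a quantity $\asymp_d 1$ uniformly in $n$, the finitely many remaining factors (including $p=2$) being bounded away from $0$ because $\sum x_i^2=n$ is everywhere nonsingularly locally solvable once $d\ge5$. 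Hence $r_d(n)\asymp_d n^{d/2-1}$ for $n$ large, and the finitely many small values are absorbed into the constants since $r_d(n)\ge1$; this gives $N_d(\lambda)=r_d(\lambda^2)\sim_d\lambda^{d-2}$. The main obstacle is therefore entirely the $d=3$ case, and within it the tracking of the $\ln\ln$ factor in both directions: for the upper bound, recognising that summing the $\ll\sqrt D\log D$ class-number bound over the square divisors of $n$ costs precisely a factor $\sigma(n)/n\asymp\ln\ln n$; for the lower bound, the Chowla-type construction together with the character-sum estimate keeping the tail of the Euler product bounded below. The cases $d=2,4$ are immediate from the classical identities, and $d\ge5$ is essentially bookkeeping for the singular series.
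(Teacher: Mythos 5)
The paper records this theorem as a classical fact with references \cite{Grosswald,IwaniecKowalski,Bateman} and gives no proof of it, so there is no in-paper argument to compare against; your sketch correctly reconstructs the standard route in those references: the exact $r_d$-formulas with Wigert for $d=2$, the Gauss/Hurwitz class number formula, the $L(1,\chi)\ll\log D$ bound and Chowla's construction for $d=3$, Jacobi with Gronwall and the odd primorials for $d=4$, and the circle method with a uniformly bounded singular series for $d\geq5$. One point worth making explicit: as you observe, your primorial computation in the $d=4$ case yields the stronger conclusion $\limsup_{\lambda\to\infty} N_4(\lambda)/(\lambda^2\ln\ln\lambda)>0$. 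The theorem as printed has $\lambda\ln\ln\lambda$ in that denominator, which appears to be a typo --- the $\limsup$ as written is trivially $+\infty$, since $N_4(\lambda)=8\sigma(\lambda^2)\geq 8\lambda^2$ whenever $\lambda^2$ is odd --- and the form the paper actually invokes later, in the remark following Proposition~\ref{prop:d3d4}, has $\lambda_n^{d-2}\ln\ln\lambda_n$ in the denominator, i.e.\ $\lambda_n^{2}\ln\ln\lambda_n$ when $d=4$. Your argument proves the intended statement.
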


Next, we recall upper bounds on the number of integer points on spherical caps.
These upper bounds easily follow from the estimates obtained by Bourgain and Rudnick \cite[Appendix A]{BourgainRudnick2012}.

\begin{lemma}[Counting lattice points in caps] \label{lemma:spherical-cap}
    For $d \geq 3$ and $r \ge 1$, we have
    \begin{itemize}
        \item If $d\ge 3$, then $N_d(\lambda,r) \lesssim_\epsilon \lambda^\epsilon r^{d-2}$ for $\epsilon > 0$.
        \smallskip
        \item If $d \ge 5$, then $N_d(\lambda,r) \lesssim_{d,\epsilon} \lambda^{-1} r^{d-1} + \lambda^\epsilon r^{d-3+\epsilon}$ for $\epsilon > 0$.
    \end{itemize}
\end{lemma}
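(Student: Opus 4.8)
The plan is to reduce everything to the lattice-point counts in spherical caps established by Bourgain and Rudnick \cite[Appendix A]{BourgainRudnick2012} and then to carry out the elementary bookkeeping that brings them into the stated form. By definition $N_d(\lambda,r)$ is the largest number of points of $\mathcal{S}^{d-1}_\lambda = \Z^d \cap \lambda\mathbb{S}^{d-1}$ lying in a Euclidean ball $B_r(x_0) \subset \R^d$, and since $\mathcal{S}^{d-1}_\lambda$ already lies in $\R^d$ this is exactly a spherical-cap count, a cap of chordal radius $r$ corresponding to a ball $B_r(x_0)$ with $x_0$ on (or near) the sphere. For $1 \le r \le \lambda$, the estimates of \cite[Appendix A]{BourgainRudnick2012} give, uniformly in the position of the center, $N_d(\lambda,r) \lesssim_\epsilon \lambda^\epsilon r^{d-2}$ when $d \in \{3,4\}$ and $N_d(\lambda,r) \lesssim_{d,\epsilon} \lambda^{-1} r^{d-1} + \lambda^\epsilon r^{d-3+\epsilon}$ when $d \ge 5$. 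It remains to handle the range $r > \lambda$, and, for $d \ge 5$, to deduce the first bound of the lemma from the second.

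For $r > \lambda$ the argument is trivial: any ball $B_r(x_0)$ meeting $\lambda\mathbb{S}^{d-1}$ contains at most all of $\mathcal{S}^{d-1}_\lambda$, so $N_d(\lambda,r) \le N_d(\lambda)$. By Theorem \ref{thm::cardinality-lattice-sphere} one has $N_d(\lambda) \lesssim_{d,\epsilon} \lambda^{d-2+\epsilon}$ for every $d \ge 3$, and $r > \lambda$ then yields $N_d(\lambda,r) \lesssim_{d,\epsilon} \lambda^\epsilon r^{d-2}$, which is the first bound; for $d \ge 5$ one moreover has $N_d(\lambda) \sim_d \lambda^{d-2} \le r^{d-2} \le \lambda^{-1} r^{d-1}$, which gives the second bound. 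Hence it suffices to consider $1 \le r \le \lambda$, and there the second bound for $d \ge 5$ is already the Bourgain--Rudnick estimate.

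It remains, for $1 \le r \le \lambda$, to prove the first bound. For $d \in \{3,4\}$ this is the Bourgain--Rudnick estimate verbatim. For $d \ge 5$ we may assume $\epsilon \le 1$ (the bound only weakens as $\epsilon$ grows, since $\lambda \ge 1$) and bound each term of $N_d(\lambda,r) \lesssim_{d,\epsilon} \lambda^{-1} r^{d-1} + \lambda^\epsilon r^{d-3+\epsilon}$ by $\lambda^\epsilon r^{d-2}$: since $r \le \lambda$ we have $\lambda^{-1} r^{d-1} = (r/\lambda)\,r^{d-2} \le r^{d-2}$, and since $r \ge 1$ and $\epsilon \le 1$ we have $r^{d-3+\epsilon} \le r^{d-2}$. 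The only points that require any care are that the Bourgain--Rudnick bounds be uniform over the location of the cap --- which is how they are proved --- and the identification of the chordal cap radius with the radius of $B_r(x_0)$; the genuinely hard input, namely the count of integer points on spherical caps via the geometry of the lattice hyperplane-slices cut by the sphere together with classical bounds on quadratic-form representations, is entirely contained in \cite{BourgainRudnick2012}. So there is no substantial obstacle beyond invoking that work correctly.
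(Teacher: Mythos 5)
Your reduction is correct and takes essentially the same route as the paper, which simply asserts that the lemma ``easily follow[s]'' from \cite[Appendix A]{BourgainRudnick2012} and gives no further detail. You supply the two missing bookkeeping steps that justify the phrase ``easily follows'': the extension from the range $1\le r\le\lambda$ (where the cap estimates of Bourgain--Rudnick live) to all $r\ge 1$ via the global count from Theorem~\ref{thm::cardinality-lattice-sphere}, and, for $d\ge 5$, the deduction of the first displayed bound from the second. Both are checked cleanly: for $r>\lambda$ you correctly use $N_d(\lambda,r)\le N_d(\lambda)\lesssim_{d,\epsilon}\lambda^{d-2+\epsilon}<\lambda^\epsilon r^{d-2}$ and, for $d\ge 5$, $N_d(\lambda)\sim_d\lambda^{d-2}\le r^{d-2}\le\lambda^{-1}r^{d-1}$; for $1\le r\le\lambda$ you correctly observe $\lambda^{-1}r^{d-1}\le r^{d-2}$ and, after reducing to $\epsilon\le 1$, $r^{d-3+\epsilon}\le r^{d-2}$. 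The only caveat is presentational: you quote the $d\in\{3,4\}$ and $d\ge 5$ cap bounds as if they appear verbatim in Bourgain--Rudnick, whereas there the statements are in terms of caps of angular radius $\lambda^{-\gamma}$ (and the $d=3$ case is treated slightly differently); the translation to chordal radius $r=\lambda^{1-\gamma}$ is immediate, so this is cosmetic, but a careful write-up should record it. Your remark that uniformity in the cap center is built into the Bourgain--Rudnick argument, and that $x_0$ may be taken on the sphere up to replacing $r$ by $2r$, is also the right observation to make and is not spelled out in the paper.
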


Considering now lattice points on spheres which are possibly lower-dimensional and not centered at zero, we record the following lemma, which only incurs a sub-polynomial loss compared to the best possible estimate.

\begin{lemma}[Counting lattice points on lower-dimensional spheres \cite{HuangZhang}, Lemma 4] \label{lemma:num-theo-sphere}
    If $2 \le d \le n$ and if $S^{d-1}_\lambda$ is a $(d-1)$-sphere embedded in $\R^n$ with radius $\lambda$, then for any $\epsilon>0$
    \begin{equation*}
        \# (S^{d-1}_\lambda \cap \Z^n) \lesssim_{d,\epsilon} \lambda^{d-2 + \epsilon}.
    \end{equation*}
\end{lemma}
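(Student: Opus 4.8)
My plan is to prove the statement by induction on the dimension parameter $d$. The inductive step will be a purely geometric ``slicing'' of the sphere by integer coordinate hyperplanes, costing only a factor $O_d(\lambda)$ each time, so that the whole argument is driven down to the base case $d=2$, which is the only place genuine arithmetic enters. Throughout one may assume $\lambda \ge 1$, since for $\lambda \le 1$ the sphere meets $\Z^n$ in $O_n(1)$ points. (As is standard for such asymptotic statements, the bound should be read as $\lesssim_{d,\epsilon}\max(1,\lambda)^{d-2+\epsilon}$, which makes the induction airtight.)

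For the inductive step I would argue as follows when $d \ge 3$. Let $V$ be the affine hull of $S^{d-1}_\lambda$, a $d$-dimensional affine subspace of $\R^n$. Since $V$ is not a point, some coordinate function $x_j$ is non-constant on $V$; as the sphere has diameter $2\lambda$, the value $x_j$ ranges over an interval of length $2\lambda$ along it, hence over at most $2\lambda+1 \lesssim \lambda$ integer values. For each integer value $t$, the slice $S^{d-1}_\lambda \cap \{x_j = t\}$ is a $(d-2)$-sphere (possibly a point, or empty) of radius $\le \lambda$, embedded in the affine hyperplane $\{x_j = t\} \cong \R^{n-1}$; forgetting the $j$-th coordinate identifies its integer points with points of $\Z^{n-1}$ on this $(d-2)$-sphere, of which, by the inductive hypothesis with parameters $(d-1,n-1)$ (note $3 \le d \le n$ yields $2 \le d-1 \le n-1$), there are $\lesssim_{d,\epsilon} \lambda^{(d-1)-2+\epsilon} = \lambda^{d-3+\epsilon}$. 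Summing over the $\lesssim \lambda$ admissible values of $t$ gives $\#(S^{d-1}_\lambda \cap \Z^n) \lesssim_{d,\epsilon} \lambda\cdot\lambda^{d-3+\epsilon} = \lambda^{d-2+\epsilon}$, as required.

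For the base case $d=2$ the sphere $S^{1}_\lambda$ is a circle in $\R^n$. If it carries at most two lattice points there is nothing to prove; otherwise the lattice points on it are not collinear and span a rational $2$-plane $W$. Differencing the equations $|p_i - c|^2 = \lambda^2$ for three such points shows the centre $c$ solves an integral linear system whose coefficient determinant $q$ is (four times) a positive-integer Gram determinant with $1 \le q \lesssim \lambda^{O(1)}$; Cramer's rule then gives $qc \in \Z^n$, whence $q^2\lambda^2 = |qp_0 - qc|^2 \in \NN$. Choosing a $\Z$-basis of the rank-$2$ lattice $\Z^n \cap (W - p_0)$ identifies the lattice points of the circle with representations of the integer $q^2\lambda^2$ by a positive-definite integral binary quadratic form, and the number of these is $\lesssim_\epsilon (q^2\lambda^2)^\epsilon \lesssim_\epsilon \lambda^\epsilon$ by the classical divisor-type bound for binary forms — which is uniform in the form, the representations injecting (up to bounded multiplicity) into the ideals of a given norm in a quadratic order. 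Since $\epsilon>0$ is arbitrary this is the desired $\lesssim_\epsilon \lambda^{(d-2)+\epsilon}$, and the induction closes. The slicing step is elementary and, in particular, introduces no dependence on the ``shape'' of the intermediate spheres or their lattices; the points that I expect to require care are the uniformity, in the binary form, of the divisor bound at $d=2$, and the (harmless, because the target exponent there is only $\epsilon$) control of denominators in the linear systems that produce $c$ and $\lambda^2$. A more laborious alternative would be to reduce directly to counting lattice points on ellipsoids attached to arbitrary integral quadratic forms in a fixed residue class; the slicing argument is preferable precisely because it sidesteps that difficulty.
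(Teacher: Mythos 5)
The paper does not prove this lemma; it is quoted verbatim from Huang--Zhang (Lemma~4 of \cite{HuangZhang}), so there is no in-paper argument to compare against. I will therefore only assess your proof on its own terms.

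Your argument is correct. The inductive slicing step is clean: once some coordinate $x_j$ is non-constant on the affine hull $V$ of the sphere, the sphere has diameter $2\lambda$, so $x_j$ takes at most $2\lambda + 1$ integer values on it; each slice $S^{d-1}_\lambda \cap \{x_j = t\}$ is a $(d-2)$-sphere (or a point, or empty) of radius $\le \lambda$ sitting in $\{x_j = t\}\cong \R^{n-1}$, and dropping the $j$-th coordinate carries $\Z^n \cap \{x_j = t\}$ isometrically onto $\Z^{n-1}$. The inductive hypothesis (applied with $(d-1,n-1)$, which still satisfies $2 \le d-1 \le n-1$) and the reading $\lesssim \max(1,\lambda)^{d-3+\epsilon}$, monotone in the radius, give $\lesssim_{d,\epsilon}\lambda^{d-3+\epsilon}$ per slice, whence $\lesssim_{d,\epsilon}\lambda^{d-2+\epsilon}$ after summing; the constants introduced at each of the at most $d-2$ slicing steps are absolute, so the final constant depends only on $d$ and $\epsilon$ as claimed. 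The base case also checks out: the Gram matrix $G$ of $p_1-p_0,\,p_2-p_0$ is an integral positive-definite $2\times 2$ matrix with entries $O(\lambda^2)$, so $q:=4\det G$ is a positive integer $\lesssim\lambda^4$; Cramer's rule then gives $qc\in\Z^n$ and hence $q^2\lambda^2 = |q p_0 - q c|^2 \in \NN$; choosing a $\Z$-basis $v_1,v_2$ of $\Z^n\cap(W-p_0)$ and writing $p=p_0+av_1+bv_2$, one lands on $Q(qa-\alpha,\,qb-\beta)=q^2\lambda^2$ for the integral positive-definite binary form $Q(x,y)=|xv_1+yv_2|^2$, where $q(c-p_0)=\alpha v_1+\beta v_2$ with $\alpha,\beta\in\Z$. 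The uniform-in-$Q$ bound $r_Q(m)\lesssim_\epsilon m^\epsilon$ is a classical consequence of the fact that the total number of representations of $m$ by \emph{all} classes of a fixed discriminant is $\lesssim d(m)$, so a single class represents $m$ at most that often; with $m\lesssim\lambda^{10}$ this yields $\lesssim_\epsilon \lambda^\epsilon$ after replacing $\epsilon$ by $\epsilon/10$. You rightly note that pushing the reduction-to-quadratic-forms all the way to dimension $d$ would be delicate (one would face $r_Q(m)\lesssim m^{d/2-1+\epsilon}$ with $m\sim q^2\lambda^2$ and a denominator $q$ that is only polynomially bounded), which is exactly why the slicing induction, driving everything down to $d=2$ where the divisor bound has exponent $\epsilon$, is the right move.
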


The following lemma, originally stated by Connes \cite{Connes1976coefficients} for the case $d=n$, remains valid for any $(d-1)$-sphere embedded in $\R^n$.
This is because the proof relies solely on volume estimates of spherical caps, which are invariant under translation.

\begin{lemma}[Clustering for lattice points on spheres \cite{Connes1976coefficients}, Lemma 1.] \label{lemma:Connes}
    If $2 \le d \le n$ and if $S^{d-1}_\lambda$ is a $(d-1)$-sphere embedded in $\R^n$ with radius $\lambda$, then one may cover $S^{d-1}_\lambda \cap \Z^n$ by its subsets $(\Omega_\alpha)_\alpha$ where each $\Omega_\alpha$ lives in an affine subspace of dimension $\le d-1$ and, if $\alpha \ne \beta$, then
    \begin{equation}
        \dist(\Omega_\alpha,\Omega_\beta) \gtrsim_d \lambda^{2/(d+1)!}.
    \end{equation}
\end{lemma}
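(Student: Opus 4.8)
The plan is to adapt Connes's original packing argument, replacing the Euclidean volume count with a purely metric one, so that the embedding of $S^{d-1}_\lambda$ into $\R^n$ (rather than $\R^d$) plays no role. First I would recall the mechanism: we wish to partition $S^{d-1}_\lambda \cap \Z^n$ into clusters, where a cluster is a maximal set of lattice points such that any two are joined by a chain of lattice points with consecutive gaps at most $\delta \coloneqq c_d \lambda^{2/(d+1)!}$ for a small dimensional constant $c_d$. By construction, distinct clusters $\Omega_\alpha, \Omega_\beta$ then satisfy $\dist(\Omega_\alpha,\Omega_\beta) > \delta$, which is the separation claimed. So the entire content is the dimension bound: each cluster must lie in an affine subspace of dimension $\le d-1$.

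The key step is therefore the following rigidity statement: if $\Omega \subset S^{d-1}_\lambda \cap \Z^n$ has diameter $\le D$ (with $D$ a suitable power of $\lambda$, smaller than $\lambda$), then $\Omega$ spans an affine subspace of dimension $\le d-1$. This is automatic in a crude sense — $\Omega$ lies on a $(d-1)$-sphere, hence any $d+1$ of its points are affinely dependent unless they are in "general position" on the sphere, and a small-diameter configuration of lattice points on a sphere of large radius cannot contain $d+1$ points in sufficiently general position. To make this quantitative one argues as follows. Suppose $\Omega$ contained $d+1$ affinely independent points $p_0,\dots,p_d$; since they lie on $S^{d-1}_\lambda$, the $(d-1)$-sphere $S^{d-1}_\lambda$ is the circumscribed sphere of the simplex they form, so the circumradius of this simplex equals $\lambda$. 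On the other hand, the circumradius of a nondegenerate lattice simplex of diameter $\le D$ is controlled by $D^{d-1}$ divided by its $(d-1)$-dimensional volume, and the volume is $\ge$ (a dimensional constant) because the vertices are distinct integer points (the volume of a lattice simplex is a positive integer multiple of $1/(d-1)!$ times the volume of a unit cell). Hence $\lambda \lesssim_d D^{d-1}$, i.e.\ $D \gtrsim_d \lambda^{1/(d-1)}$. Taking $D = \delta = c_d \lambda^{2/(d+1)!}$, and noting $2/(d+1)! < 1/(d-1)$ for $d \ge 2$, the inequality $\delta \gtrsim_d \lambda^{1/(d-1)}$ fails once $\lambda$ is large, a contradiction; for the finitely many remaining small $\lambda$ one adjusts the constant. (The precise exponent $2/(d+1)!$ is chosen in Connes to make a related quantitative circumradius estimate work with room to spare; I would follow his bookkeeping rather than re-optimize.)

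The main obstacle is the quantitative lower bound on the $(d-1)$-volume, equivalently the circumradius estimate, for a lattice simplex of small diameter lying in an arbitrary affine $d$-plane of $\R^n$. The subtlety is that the $d+1$ points lie in $\Z^n$ but their affine span need not be a coordinate subspace, so "the volume of a lattice simplex is a multiple of $1/(d-1)!$" is not literally true unless one first passes to the sublattice $\Z^n \cap (\text{affine span})$, whose covolume could a priori be large. However, this sublattice has covolume at most the norm of a primitive $(d-1)$-vector built from the $p_i - p_0$, which is $\lesssim_d D^{d-1}$ by Hadamard's inequality; this is exactly the quantity that reappears in the circumradius bound, so the argument closes with the same power of $D$. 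Once this lemma is in hand, the clustering conclusion follows immediately from the chaining definition and the triangle inequality, and the covering $(\Omega_\alpha)_\alpha$ — which may be taken to be the partition into clusters — has all the stated properties. I would present the volume/circumradius estimate as a short self-contained sublemma, cite \cite{Connes1976coefficients} for the original version, and remark that translation-invariance of all quantities involved (diameters, distances, lattice covolumes of difference sets) is what allows the sphere to be off-center and embedded in a higher-dimensional space.
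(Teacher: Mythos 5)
The paper's proof of this lemma is essentially a citation: it invokes Connes \cite{Connes1976coefficients} and observes that Connes's argument, which uses only spherical-cap volume estimates, is translation invariant and hence extends verbatim from $\Z^d$ to an arbitrary $(d-1)$-sphere in $\R^n$. Your proposal instead attempts a self-contained re-derivation, which is a more ambitious goal, but there is a genuine gap.

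The gap is in the passage from your rigidity statement to the conclusion. You correctly define the clusters $\Omega_\alpha$ as connected components of the graph on $S^{d-1}_\lambda \cap \Z^n$ in which two points are joined when they lie within $\delta$ of each other; this automatically gives $\dist(\Omega_\alpha,\Omega_\beta) > \delta$. Your rigidity lemma then asserts: if $\Omega$ has \emph{diameter} $\le D$, then it spans an affine subspace of dimension $\le d-1$. This is correct (by the circumradius/volume argument; see the formula remark below). But a $\delta$-chained cluster need not have diameter $\le \delta$: the chaining controls only consecutive gaps, so a cluster can a priori wander across a macroscopic portion of the sphere. Your claim that "the clustering conclusion follows immediately from the chaining definition and the triangle inequality" is therefore unsubstantiated --- and one cannot sidestep the problem by partitioning into small-diameter blocks, since such a partition has no reason to be $\delta$-separated (think of a long $\delta$-chain of nearly equispaced points: any diameter-$D$ cut leaves neighbouring blocks at distance $\le \delta$, not $\ge D$). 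Closing this gap is precisely the content of Connes's lemma: one must show that a $\delta$-\emph{connected} (not merely small-diameter) configuration of lattice points on the sphere lies in an affine $(d-1)$-plane, which requires bootstrapping the low-dimensionality claim along the chain, with a recursion across intermediate sub-spheres; the factorial in the exponent $2/(d+1)!$ reflects this recursion and is not produced by a single small-simplex volume estimate.

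A secondary, more minor issue: the circumradius bound you state, $\lambda \lesssim_d D^{d-1}/V$ with $V$ a $(d-1)$-dimensional volume, is dimensionally inconsistent --- checking against $d=2$, where $R = abc/(4K)$ with $K$ the triangle area and the sides $\le D$ gives $\lambda \lesssim D^3$, one sees the correct statement involves $D^{d+1}$ over the \emph{$d$-dimensional} volume of the $d$-simplex on $d+1$ vertices. Equivalently, a cap of diameter $D$ on a $(d-1)$-sphere of radius $\lambda$ lies within $D^2/(2\lambda)$ of its tangent $(d-1)$-plane, so a lattice $d$-simplex inside the cap has $d$-volume $\lesssim D^{d-1}\cdot D^2/\lambda$, while being bounded below by a dimensional constant; this gives $D\gtrsim_d \lambda^{1/(d+1)}$, which indeed dominates $\lambda^{2/(d+1)!}$, but again only for the diameter-$D$ formulation, not the chained one.
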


Connes constructed this partition by viewing $\mathcal{S}^{d-1}_\lambda$ as a graph: two distinct points $ p, q \in \mathcal{S}^{d-1}_\lambda $ are connected by an edge if $|p - q| \lesssim_d \lambda^{2/(d+1)!}$.
The clusters $(\Omega_\alpha)_\alpha$ are then defined as the connected components of this graph.
Particularly, when $d=2$, this lemma, which was first stated by Jarník \cite{Jarnik1926gitterpunkte} (see also \cite{CillerueloCordoba} for an improvement), implies
\begin{equation}
\label{eq::jarnik}
    \#\Omega_\alpha \le 2,
    \quad \diam \Omega_\alpha \lesssim \lambda^{1/3}, 
    \quad \forall \alpha.
\end{equation}

\subsection{A result on Fourier transforms of measures}

\begin{lemma} \label{lemma:delta-measure}
    If $\mu \in \mathcal{P} (\T^d)$ satisfies    $\lim_{k \to \infty} \widehat{\mu}_k = 0$, then $ \sup_{k \neq 0} |\widehat{\mu}_k| < 1$.
\end{lemma}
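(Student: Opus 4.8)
The plan is to argue by contradiction, using the decay hypothesis to ensure that the supremum is \emph{attained}, and then extracting rigidity from the equality case in the triangle inequality for integrals.

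First I would record the trivial bound $|\widehat{\mu}_k| \le \mu(\T^d) = 1$ valid for every $k \in \Z^d$, so that $\sup_{k \neq 0}|\widehat{\mu}_k| \le 1$ always holds; the point is to rule out equality. Suppose $\sup_{k \neq 0}|\widehat{\mu}_k| = 1$. Since $\widehat{\mu}_k \to 0$ as $|k| \to \infty$, only finitely many $k$ satisfy $|\widehat{\mu}_k| \ge \tfrac12$, so the supremum is in fact a maximum, attained at some $k_0 \in \Z^d \setminus \{0\}$ with $|\widehat{\mu}_{k_0}| = 1$.

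Next I would extract the structural consequence of this. Writing $\widehat{\mu}_{k_0} = e^{i\phi}$ and taking real parts in $1 = \int_{\T^d} e^{-i\phi} e^{-2\pi i k_0 \cdot x}\dd\mu(x)$ gives $\int_{\T^d}\cos(\phi + 2\pi k_0 \cdot x)\dd\mu(x) = 1 = \mu(\T^d)$; since the integrand is pointwise $\le 1$, it must equal $1$ for $\mu$-almost every $x$, i.e.\ $e^{-2\pi i k_0 \cdot x} = e^{i\phi}$ for $\mu$-a.e.\ $x$. Then, for every integer $n \ge 1$,
\begin{equation*}
    \widehat{\mu}_{n k_0} = \int_{\T^d} \bigl(e^{-2\pi i k_0 \cdot x}\bigr)^n \dd\mu(x) = \int_{\T^d} e^{i n \phi}\dd\mu(x) = e^{i n \phi},
\end{equation*}
so $|\widehat{\mu}_{n k_0}| = 1$ for all $n \ge 1$. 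Since $|n k_0| \to \infty$ as $n \to \infty$, this contradicts the hypothesis $\lim_{k\to\infty}\widehat{\mu}_k = 0$, completing the proof.

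I do not expect any genuine obstacle here: the only step requiring minimal care is the observation that the supremum is attained, and this is precisely where the decay hypothesis enters — it is exactly what fails for measures such as a Dirac mass $\delta_{x_0}$, all of whose Fourier coefficients have modulus $1$, consistent with the fact that such a measure is excluded by the hypothesis.
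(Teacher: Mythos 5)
Your proof is correct and takes essentially the same route as the paper's: both argue by contradiction, showing that a unimodular Fourier coefficient $\widehat{\mu}_{k_0}$ forces $e^{2\pi i k_0\cdot x}$ to be $\mu$-a.e.\ constant, hence $|\widehat{\mu}_{nk_0}|=1$ for all $n$, contradicting decay. You are slightly more careful than the paper in one place — justifying via the decay hypothesis that the supremum is actually attained (so that the negation of the conclusion really yields some $k_0$ with $|\widehat{\mu}_{k_0}|=1$), a step the paper's proof leaves implicit.
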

\begin{proof}
    We argue by contradiction and assume that $|\widehat{\mu}_k| = 1$ for some $k \in \Z^d \setminus \{0\}$.
    Then $e^{2\pi i k\cdot x} = e^{i\theta}$ for some $\theta \in \R$ and all $x \in \supp \mu$.
    Hence $e^{2\pi i Nk\cdot x} = e^{iN\theta}$ for all $N \in \NN \setminus 0$.
    Therefore $\widehat{\mu}_{Nk} = e^{iN\theta}$ and thus $|\widehat{\mu}_{Nk}|=1$.
    This contradicts the hypotheses of the lemma.
\end{proof}

\section{The case of dimension 2} \label{sec:dimension2}

It is instructive to consider first the case of the two-dimensional torus. The following theorem combines the results of Zygmund \cite{Zygmund1972} and Bourgain and Rudnick \cite{BourgainRudnick2009restriction}.
The proofs are short and elegant, and they are a source of inspiration throughout the present paper.

\begin{theorem}
\label{thm::ZBR} 
    When $d=2$, the following statements hold:
    \begin{enumerate}[label=(\roman*)]
        \item The trace inequality and the semiclassical observability inequality hold true if
        \begin{equation}
        \label{eq::condition-fourier-decay-2d}
            |\widehat{\mu}_k| \lesssim |k|^{-\epsilon}, \quad \forall k \in \Z^2.
        \end{equation}
        \item The trace inequality and the observability inequality hold true if 
        \begin{equation}
            \dd \mu = f \dd x,\quad f \in L^2(\T^2).
        \end{equation}
    \end{enumerate}
\end{theorem}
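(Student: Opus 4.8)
The two parts rest on one computation. Expanding $|u|^2$ in Fourier series, for $u=\sum_{k\in\mathcal{S}^1_\lambda}\widehat u_k e^{2\pi i k\cdot x}\in E_\lambda$ one has
\begin{equation*}
    \int_{\T^2}|u|^2\dd\mu=\sum_{m\in\Z^2}\widehat\mu_m\,a_m,
    \qquad
    a_m:=\sum_{\substack{k\in\mathcal{S}^1_\lambda\\ k+m\in\mathcal{S}^1_\lambda}}\widehat u_k\,\overline{\widehat u_{k+m}},
\end{equation*}
with $a_0=\|u\|_{L^2}^2$. The arithmetic fact I would exploit is specific to $d=2$: for $m\neq0$ the conditions $|k|^2=|k+m|^2=\lambda^2$ force $k\cdot m=-|m|^2/2$, so $k$ lies on a fixed affine line, which meets the circle $\lambda\mathbb S^1$ in at most two points; hence each $a_m$ with $m\neq0$ has at most two contributing terms. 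From this I extract two bounds, both uniform in $\lambda$. First, Cauchy--Schwarz over those two terms gives $|a_m|^2\le 2\sum_{k}|\widehat u_k|^2|\widehat u_{k+m}|^2$, and summing over $m\neq0$ (re-indexing as a sum over ordered pairs of distinct points of $\mathcal{S}^1_\lambda$) yields $\sum_{m\neq0}|a_m|^2\le 2\|u\|_{L^2}^4$. Second, combining Jarník's clustering \eqref{eq::jarnik} (clusters of size $\le2$) with the separation $\dist(\Omega_\alpha,\Omega_\beta)\gtrsim\lambda^{1/3}$ of the Connes clusters (Lemma~\ref{lemma:Connes}), every disc of radius $\le c_0\lambda^{1/3}$ meets at most one cluster, so $N_2(\lambda,c_0\lambda^{1/3})\le 2$.

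\emph{Trace inequality.} For (ii), I would isolate $m=0$, using $|\widehat\mu_0 a_0|=|\widehat f_0|\,\|u\|_{L^2}^2\le\|f\|_{L^2}\|u\|_{L^2}^2$, and control the tail by Cauchy--Schwarz and the first bound: $\bigl|\sum_{m\ne0}\widehat\mu_m a_m\bigr|\le\|f\|_{L^2}\bigl(\sum_{m\ne0}|a_m|^2\bigr)^{1/2}\le\sqrt2\,\|f\|_{L^2}\|u\|_{L^2}^2$, so $\mathfrak C_\mu\lesssim\|f\|_{L^2}$. For (i), the natural tool is the Schur test for $(\widehat\mu_{k'-k})_{k,k'\in\mathcal{S}^1_\lambda}$: since $|\widehat\mu_{-j}|=|\widehat\mu_j|$, it suffices to bound $\sup_{k\in\mathcal{S}^1_\lambda}\sum_{k'\in\mathcal{S}^1_\lambda}|\widehat\mu_{k'-k}|$ uniformly in $\lambda$. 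Here $k'=k$ gives $1$; by the second bound there is at most one $k'$ with $0<|k'-k|\le c_0\lambda^{1/3}$, contributing $\le\sup_{m\ne0}|\widehat\mu_m|$; and for $|k'-k|>c_0\lambda^{1/3}$ I use $|\widehat\mu_{k'-k}|\lesssim|k'-k|^{-\epsilon}\lesssim_\epsilon\lambda^{-\epsilon/3}$ together with $\#\mathcal{S}^1_\lambda=N_2(\lambda)$ to bound the total by $\lesssim_\epsilon N_2(\lambda)\lambda^{-\epsilon/3}$, which is bounded because $N_2(\lambda)$ grows subpolynomially (Theorem~\ref{thm::cardinality-lattice-sphere}). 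Thus $\mathfrak C_\mu<\infty$.

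\emph{Observability.} For (i), $\widehat\mu_0=1$ gives $\int_{\T^2}|u|^2\dd\mu=\|u\|_{L^2}^2+\sum_{m\ne0}\widehat\mu_m a_m$; the Schur test bounds the tail by $\bigl(\sup_{k}\sum_{k'\ne k}|\widehat\mu_{k'-k}|\bigr)\|u\|_{L^2}^2$, and splitting this sum as above (at most one small-difference term, of size $\le\delta:=\sup_{m\ne0}|\widehat\mu_m|$; large-difference terms totalling $\eta(\lambda)\to0$) gives $\le(\delta+\eta(\lambda))\|u\|_{L^2}^2$. Since the hypothesis forces $\widehat\mu_k\to0$, Lemma~\ref{lemma:delta-measure} gives $\delta<1$, hence $\int_{\T^2}|u|^2\dd\mu\ge\tfrac{1-\delta}{2}\|u\|_{L^2}^2$ for $\lambda$ large: the semiclassical observability inequality. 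For (ii), I would first reduce to an indicator: if $\dd\mu=f\dd x$ with $f\in L^1$, $f\ge0$, $f\not\equiv0$, pick $t_0>0$ so that $E:=\{f\ge t_0\}$ has positive measure; then $f\ge t_0\mathbf 1_E$, so it suffices to treat $\dd\mu=\mathbf 1_E\dd x$ (and $L^2\subset L^1$). Observability for $\mathbf 1_E\dd x$, $|E|>0$, is Zygmund's theorem, whose engine is the uniform $L^4$ bound $\|u\|_{L^4}^2\le\sqrt3\,\|u\|_{L^2}^2$ --- immediate from $\sum_{m}|a_m|^2=\|u\|_{L^2}^4+\sum_{m\ne0}|a_m|^2\le 3\|u\|_{L^2}^4$.

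\emph{Where the difficulty lies.} With \eqref{eq::jarnik}, Lemma~\ref{lemma:Connes}, Lemma~\ref{lemma:delta-measure} and Theorem~\ref{thm::cardinality-lattice-sphere} in hand, the trace inequalities and the semiclassical observability of (i) are routine bookkeeping. The genuine obstacle is the last step: upgrading the $L^4$ bound to observability from an \emph{arbitrary} positive-measure set. The $L^4$ bound with Hölder only gives observability when $|E|$ is close to $1$ (e.g.\ $\int_{E^c}|u|^2\le\|u\|_{L^4}^2|E^c|^{1/2}<\|u\|_{L^2}^2$ requires $|E^c|<1/3$), and removing this restriction is the substantive content; I would either reproduce Zygmund's argument or invoke the implication ``uniform $L^p$ bound for some $p>2$ $\Rightarrow$ observability from positive-measure Borel sets'' of \cite{BurqZhu} with $p=4$. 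The bounded-$\lambda$ regime in (ii) is then disposed of by finite-dimensionality: a nonzero element of $E_\lambda$ is real-analytic, so does not vanish on a positive-measure set, which rules out counterexamples with $\lambda$ bounded.
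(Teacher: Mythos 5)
Your treatment of the trace inequalities and of the semiclassical observability in part~(i) is correct and in substance the same as the paper's: the Schur test you apply to the Hermitian matrix $(\widehat\mu_{k'-k})_{k,k'\in\mathcal{S}^1_\lambda}$ is a clean reformulation of the paper's combination of the cluster decomposition \eqref{eq::OBS-decomp-jarnik} with Cauchy--Schwarz, and the ingredients (Jarník's bound $\#\Omega_\alpha\le2$, Connes' separation $\gtrsim\lambda^{1/3}$, Lemma~\ref{lemma:delta-measure}, and the subpolynomial bound on $N_2(\lambda)$ from Theorem~\ref{thm::cardinality-lattice-sphere}) match. The trace inequality in~(ii) is also handled the same way the paper does, via the at-most-two-representations observation and Cauchy--Schwarz.

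The gap is in the observability half of~(ii). You reduce to indicator functions $\mathbf{1}_E$, set up the $L^4$ bound $\|u\|_{L^4}^2\le\sqrt3\,\|u\|_{L^2}^2$, correctly note that $L^4$ plus H\"older only gives observability when $|E^c|$ is small, and then propose to ``reproduce Zygmund's argument'' (not done) or cite \cite{BurqZhu}. Appealing to Zygmund's theorem is circular here, since that theorem \emph{is} statement~(ii). What you have missed is that the very cluster decomposition you already deploy for~(i) settles~(ii) directly, with no detour through $\mathbf{1}_E$ and no external input. Normalize $\dd\mu=f\dd x$ to a probability measure; then $(\widehat\mu_m)_m\in\ell^2$ by Parseval, so the cross-cluster contribution in \eqref{eq::OBS-decomp-jarnik}, restricted to $|m|\gtrsim\lambda^{1/3}$ by the Connes separation, is bounded via Cauchy--Schwarz and your own estimate $\sum_m|a_m|^2\le3\|u\|_{L^2}^4$ by
\begin{equation*}
\biggl(\sum_{|m|\gtrsim\lambda^{1/3}}|\widehat\mu_m|^2\biggr)^{1/2}\biggl(\sum_m|a_m|^2\biggr)^{1/2}
\lesssim\biggl(\sum_{|m|\gtrsim\lambda^{1/3}}|\widehat\mu_m|^2\biggr)^{1/2}\|u\|_{L^2}^2,
\end{equation*}
which is a tail of a convergent series, hence $o(1)$ as $\lambda\to\infty$. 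Since $f\in L^2\subset L^1$ gives $\widehat\mu_m\to0$ by Riemann--Lebesgue, Lemma~\ref{lemma:delta-measure} yields $\delta=\sup_{m\ne0}|\widehat\mu_m|<1$, and the intra-cluster (diagonal) part of \eqref{eq::OBS-decomp-jarnik} is $\ge(1-\delta)\|u\|_{L^2}^2$. Together these give semiclassical observability for~(ii); your concluding finite-dimensionality remark then disposes of the bounded-$\lambda$ regime. This is exactly the paper's argument, and it renders the $L^4$/Zygmund/\cite{BurqZhu} detour unnecessary.
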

\begin{proof}
    For $u(x) = \sum_{k \in \mathcal{S}^{d-1}_\lambda} \widehat{u}_k e^{2\pi i k\cdot x}$ in $E_\lambda$, write
    \begin{equation}
        \int_{\T^2} |u|^2 \dd \mu  = \sum_{k,\ell \in \mathcal{S}^1_\lambda} \widehat{\mu}_{k-\ell} \widehat{u}_k \overline{\widehat{u}_\ell}.
    \end{equation}

    \noindent\textbf{The trace inequality:}
    If $\dd \mu = f \dd x$ with $f$ in $L^2$, then $(\widehat{\mu}_k)_{k \in \Z^2} \in \ell^2$ by Parseval's theorem. Furthermore, any $\xi \in \Z^2$ can be represented as $k-\ell$ with $k,\ell \in \mathcal{S}^1_\lambda$ in at most two different ways. 
    With the Cauchy--Schwarz inequality, this gives 
    \begin{equation*}
        \biggl| \sum_{k,\ell \in \mathcal{S}^1_\lambda} \widehat{\mu}_{k-\ell} \widehat{u}_k \overline{\widehat{u}_\ell} \biggr| \leq \biggl( \sum_{k,\ell \in \mathcal{S}^1_\lambda} |\widehat{\mu}_{k-\ell}|^2 \biggr)^{1/2}  \biggl( \sum_{k,\ell \in \mathcal{S}^1_\lambda} |\widehat{u}_k|^2 |\widehat{u}_\ell|^2 \biggr)^{1/2} \leq \sqrt{2} \| f \|_{L^2} \| u \|_{L^2}^2.
    \end{equation*}

    If $|\widehat{\mu}_k| \lesssim |k|^{-\epsilon}$, we use Jarnik's lemma (Lemma~\ref{lemma:Connes} or, more precisely, \eqref{eq::jarnik}) which gives a splitting of $\mathcal{S}^1_\lambda$ into sets $(\Omega_\alpha)$ such that $\# \Omega_\alpha \leq 2$ and $\dist(\Omega_\alpha,\Omega_\beta) \gtrsim \lambda^{1/3}$ for $\alpha \neq \beta$. 
    Write
    \begin{equation}
    \label{eq::OBS-decomp-jarnik}
        \sum_{k,\ell \in \mathcal{S}^1_\lambda} \widehat{\mu}_{k-\ell} \widehat{u}_k \overline{\widehat{u}_\ell} 
        = \sum_\alpha \sum_{k,\ell \in \Omega_\alpha} \widehat{\mu}_{k-\ell} \widehat{u}_k \overline{\widehat{u}_\ell} 
        + \sum_{\alpha \neq \beta} \sum_{k \in \Omega_\alpha} \sum_{\ell \in \Omega_\beta} \widehat{\mu}_{k-\ell} \widehat{u}_k \overline{\widehat{u}_\ell}.
    \end{equation}
    By the Cauchy--Schwarz inequality and the bound $\sharp \mathcal{S}^1_\lambda = N_1(\lambda) \lesssim_\epsilon \lambda^{\epsilon/4}$ for any $\epsilon>0$, we deduce
    \begin{align*}
        \biggl| \sum_{k,\ell \in \mathcal{S}^1_\lambda} \widehat{\mu}_{k-\ell} \widehat{u}_k \overline{\widehat{u}_\ell} \biggr| 
        & \leq \biggl|\sum_\alpha \sum_{k,\ell \in \Omega_\alpha} \widehat{\mu}_{k-\ell} \widehat{u}_k \overline{\widehat{u}_\ell} \biggr| 
        + \biggl| \sum_{\alpha \neq \beta} \sum_{k \in \Omega_\alpha} \sum_{\ell \in \Omega_\beta} \widehat{\mu}_{k-\ell} \widehat{u}_k \overline{\widehat{u}_\ell} \biggr| \\
        & \lesssim_\epsilon \sum_\alpha \sum_{k,\ell \in \Omega_\alpha} | \widehat{u}_k \widehat{u}_\ell | + \lambda^{-\epsilon/3} \sum_{k,\ell \in \mathcal{S}^1_\lambda } |\widehat{u}_k \widehat{u}_\ell| 
        \\
        & \lesssim \sum_\alpha \sup_\alpha \#\Omega_\alpha \sum_{k \in \Omega_\alpha} |\widehat{u}_k|^2 + \lambda^{-\epsilon/3} N_1(\lambda) \sum_{k \in \mathcal{S}^1_\lambda} |\widehat{u}_k|^2 \lesssim \| u \|_{L^2}^2.
    \end{align*}

    \noindent\textbf{The observability inequality:}
    Similarly, we reuse \eqref{eq::OBS-decomp-jarnik} and estimate
    \begin{equation*}
        \biggl| \sum_{\alpha \neq \beta} \sum_{k \in \Omega_\alpha} \sum_{\ell \in \Omega_\beta} \widehat{\mu}_{k-\ell} \widehat{u}_k \overline{\widehat{u}_\ell} \biggr|
        \lesssim \begin{cases}
        \sum_{|k| \gtrsim \lambda^{-1/3}} |\widehat{\mu}_k|^2, & \mu \in L^2; \\
        C_\epsilon \lambda^{\epsilon/4} \lambda^{ - \epsilon/3}, & |\widehat{\mu}_k| \lesssim |k|^{-\epsilon}.
        \end{cases}
    \end{equation*}
    The important point is that the right-hand side goes to zero as $\lambda \to \infty$. 
    There remains the first sum in the above equation.
    We analyze the following two cases separately:
    \begin{itemize}
        \item If $\Omega_\alpha = \{p\}$ is a singleton, then clearly
        \begin{equation*}
            \sum_\alpha \sum_{k,\ell \in \Omega_\alpha} \widehat{\mu}_{k-\ell} \widehat{u}_k \overline{\widehat{u}_\ell}
            = |\widehat{u}_p|^2.
        \end{equation*}
        \item If $\Omega_\alpha = \{p,q\}$ is a doubleton, then
        \begin{align*}
            \sum_\alpha \sum_{k,\ell \in \Omega_\alpha} \widehat{\mu}_{k-\ell} \widehat{u}_k \overline{\widehat{u}_\ell}
            & = |\widehat{u}_p|^2 + |\widehat{u}_q|^2 + \widehat{\mu}_{p-q} (\widehat{u}_p \overline{\widehat{u}_q} + \widehat{u}_q \overline{\widehat{u}_p}) \\
            & \ge (1-|\widehat{\mu}_{p-q}|) (|\widehat{u}_p|^2 + |\widehat{u}_q|^2).
        \end{align*}
        This estimate is uniform among all such $\Omega_\alpha$ since by Lemma \ref{lemma:delta-measure}, we have
        \begin{equation*}
            1-|\widehat{\mu}_{p-q}|
            \ge 1 - \sup_{k \in \Z^2 \setminus 0} |\widehat{\mu}_k| > 0.
        \end{equation*}
    \end{itemize}

    This proves the semiclassical observability for both cases $(i)$ and $(ii)$.
    In the case $(ii)$, there remains to prove the observability for eigenfunctions with bounded eigenvalues.
    This is immediate, since trigonometric polynomials cannot vanish on sets of positive Lebesgue measure.
\end{proof}

\begin{remark}
    A number of remarks and consequences are of interest.
    \begin{itemize} 
        \item The condition \eqref{eq::condition-fourier-decay-2d} is satisfied if $\mu$ is given by a smooth density on a smooth curve with non-vanishing curvature --- this was the original motivation of Bourgain and Rudnick.
        Since the zero set of eigenfunctions of the Laplacian can include curves of nonzero curvature, this example shows that the observability inequality can be satisfied in the limit $\lambda \to \infty$ although it is not satisfied for small values of $\lambda$.
        \item If $\epsilon$ is the optimal polynomial decay rate of the Fourier transform of the measure in \eqref{eq::condition-fourier-decay-2d}, then $2\epsilon$ is called the Fourier dimension of the measure, see Mattila \cite{Mattila2015} for many examples and an introduction to this theory.
        \item We prove that the trace inequality constant $\mathfrak{C}_\mu$ is bounded by $\sup_k | \widehat{\mu}_k| |k|^\epsilon$ and $\| \mu \|_{L^2}$ in assertions $(i)$ and $(ii)$ respectively. 
        The semiclassical observability constant $\mathfrak{c}_\mu^{\operatorname{sc}}$ is bounded by $(1 - \sup_{k \neq 0} |\widehat{\mu}_k|)^{-1}$ in both cases. 
        \item The assertion $(ii)$ implies observability for any probability measure of the form $\dd \mu = f \dd x$ with $f \in L^1$. Indeed, let $f_M = \min\{f,M\}$ with $M>0$ sufficiently large, then $f_{ M} \in L^2$ and $f \geq f_{M} $.
        The observability inequality for $f_M$ implies that for $f$.
    \end{itemize}
\end{remark}

\begin{remark}
    The above theorem can be extended rather straightforwardly to the following cases
    \begin{itemize}
        \item Superficial measures supported on non-smooth curves, $\dd \mu = \dd \sigma_\Gamma$. One may assume either $\Gamma$ is twice continuously differentiable with non-vanishing curvature, or $\Gamma$ is convex and rely on \cite{Chakhiev} to obtain decay for the Fourier transform.
        \item Functions in Sobolev spaces $W^{\epsilon,p}$ with $\epsilon > 0$ and $p \ge 1$ (clearly it gives new results only when $p \in [1,2)$).
        To show this, it suffices to use Jarnik's lemma as above, and notice that the Sobolev regularity gives polynomial gain in $\lambda$ while the number of lattice points on the circle is subpolynomial.
    \end{itemize}
    Motivated these results, it is natural to investigate whether the trace and observability inequalities extend to broader classes of measures and sets. In particular, the following cases are of interest and open to the best of our knowledge:
    \begin{itemize}
        \item \textbf{Trace inequality:} Densities belonging to \( L^p(\T^2) \) for \( 1 < p < 2 \).
        \item \textbf{Trace inequality:} Densities in \( L^p(\Sigma) \), where \( \Sigma \subset \T^2 \) is a smooth curve.
        \item \textbf{Observability inequality:} Subsets of a smooth curve \( \Sigma \subset \T^2 \) with non-vanishing curvature, provided these subsets have positive measure.
        \item \textbf{Trace and observability inequalities:} Measures supported on sets with Hausdorff dimension strictly greater than 1.
    \end{itemize}
\end{remark}

\section{Necessary conditions for the  trace inequality} \label{sec:trace-nec}

We will show in this section that probability measures which are  $(d-2)$-regular are critical to have the trace inequality. The first examples which come to mind are 
\begin{itemize}
    \item $\dd \mu = f \dd x$ where $f$ is smooth away from $x=0$ and $ f(x) \sim |x|^{-2}$ near $x =0$.
    \item $\dd \mu = \phi \dd \sigma_\Gamma$ where  $\Gamma$ is a smooth $(d-2)$-dimensional manifold and $\phi \in C^\infty(\Gamma)$.
\end{itemize}

Our idea is to use concentration properties of the so-called Bourgain eigenfunctions. Precisely, these are eigenfunctions of the form
\begin{equation}
\label{eq::eigenfunction-bourgain}
    \varphi_{\lambda,x_0} (x) = \sum_{k \in \mathcal{S}^{d-1}_\lambda} e^{2\pi i k \cdot (x- x_0)},
    \quad
    \lambda \in \sqrt{\NN},
    \ 
    x_0 \in \T^d.
\end{equation}
Clearly $\|\varphi_{\lambda,x_0}\|_{L^2} = \sqrt{N_d(\lambda)}$ and $\varphi_{\lambda,x_0}(x_0) = N_d(\lambda)$.
Notice that, there exists $c_0 > 0$ such that, if $x \in B_r(x_0)$ with $r\lambda \le c_0$, then $\cos\bigl(2\pi k\cdot(x-x_0)\bigr) \ge \frac{1}{2}$ for all $k \in \mathcal{S}^{d-1}_\lambda$.
For such $x$,
\begin{equation}
\label{eq::bourgain-planck-concentration}
    |\varphi_{\lambda,x_0}(x)| 
    \ge |\Re \varphi_{\lambda,x_0}(x)|
    \ge \sum_{k \in \mathcal{S}^{d-1}_\lambda} \cos\bigl(2\pi k\cdot(x-x_0)\bigr)
    \ge \frac{1}{2} N_d(\lambda).
\end{equation}

\begin{proposition} 
    \label{propnecessary3}
    If $d \geq 3$ and if the trace inequality holds for $ \mu \in \mathcal{P} (\T^d)$, then $\mu$ is upper $(d-2)$-regular, i.e., $\mu \bigl(B_r(x_0)\bigr) \lesssim r^{d-2}$ for all $r >0$ and $x_0 \in \T^d$.
    Consequently,
    \begin{equation}
        \dim_{H} (\supp \mu) \geq d-2.
    \end{equation}
\end{proposition}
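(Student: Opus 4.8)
The plan is to exploit the strong concentration of the Bourgain eigenfunction $\varphi_{\lambda,x_0}$ recorded in \eqref{eq::bourgain-planck-concentration}, and test the trace inequality against it. Fix $x_0 \in \T^d$ and $r \in (0,1)$. Choose $\lambda \in \sqrt{\NN}$ comparable to $c_0/r$ (so that $r\lambda \le c_0$), but — and this is the crucial point — chosen so that $N_d(\lambda)$ is not abnormally small: by Lemma~\ref{lem::lattice-number-lower-bound} there is some $\lambda \in [c_0/(2r), c_0/r] \cap \sqrt{\NN}$ with $N_d(\lambda) \gtrsim_d \lambda^{d-2} \sim r^{-(d-2)}$. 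With this choice, on one hand $\|\varphi_{\lambda,x_0}\|_{L^2}^2 = N_d(\lambda)$, and on the other hand, restricting the integral $\int |\varphi_{\lambda,x_0}|^2 \dd\mu$ to the ball $B_r(x_0)$ and using \eqref{eq::bourgain-planck-concentration},
\begin{equation*}
    \int_{\T^d} |\varphi_{\lambda,x_0}|^2 \dd\mu \ge \int_{B_r(x_0)} |\varphi_{\lambda,x_0}|^2 \dd\mu \ge \frac14 N_d(\lambda)^2 \, \mu\bigl(B_r(x_0)\bigr).
\end{equation*}

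Plugging these into the trace inequality $\int |\varphi_{\lambda,x_0}|^2 \dd\mu \le \mathfrak{C}_\mu \|\varphi_{\lambda,x_0}\|_{L^2}^2$ gives $\frac14 N_d(\lambda)^2 \mu(B_r(x_0)) \le \mathfrak{C}_\mu N_d(\lambda)$, hence
\begin{equation*}
    \mu\bigl(B_r(x_0)\bigr) \le \frac{4\mathfrak{C}_\mu}{N_d(\lambda)} \lesssim_d \mathfrak{C}_\mu \, \lambda^{-(d-2)} \sim_d \mathfrak{C}_\mu \, r^{d-2},
\end{equation*}
which is exactly upper $(d-2)$-regularity, with an implied constant controlled by $\mathfrak{C}_\mu$ and $d$. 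For $r$ bounded away from $0$ the estimate $\mu(B_r(x_0)) \le 1 \lesssim r^{d-2}$ is trivial, so it suffices to treat small $r$, which is where the argument above applies. The passage from upper $(d-2)$-regularity to $\dim_H(\supp\mu) \ge d-2$ is then the standard mass distribution principle (Frostman): if $\mu(B_r(x)) \lesssim r^{d-2}$ for all $x,r$, then for any $s < d-2$ one has $\mathcal{H}^s(\supp\mu) = \infty$ (in particular positive), so $\dim_H(\supp\mu) \ge s$ for every $s < d-2$; let $s \to d-2$.

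I expect the main obstacle to be purely bookkeeping: ensuring that one can simultaneously arrange $r\lambda \le c_0$ \emph{and} $N_d(\lambda) \gtrsim_d \lambda^{d-2}$, i.e.\ that the interval $[c_0/(2r), c_0/r]$ is long enough (length $c_0/(2r) \ge 1$ for $r$ small) to contain an element of $\sqrt{\NN}$ with the desired lower bound on $N_d$; this is precisely what Lemma~\ref{lem::lattice-number-lower-bound} delivers, since it produces such a $\lambda$ in any window of length $2$ around a prescribed $R$. One should also note that this argument needs $d \ge 3$ only insofar as the statement is vacuous-ish for $d = 2$ (where $d - 2 = 0$ and upper $0$-regularity just says $\mu(B_r(x_0)) \lesssim 1$, automatic); the mechanism itself is dimension-independent. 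No curvature or arithmetic finesse beyond the pigeonhole lemma is needed — the concentration of Bourgain eigenfunctions does all the work.
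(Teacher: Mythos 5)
Your proof is correct and follows essentially the same approach as the paper: test the trace inequality against the Bourgain eigenfunction $\varphi_{\lambda,x_0}$, use Lemma~\ref{lem::lattice-number-lower-bound} to pick $\lambda \sim 1/r$ with $N_d(\lambda)\gtrsim \lambda^{d-2}$, apply the concentration bound \eqref{eq::bourgain-planck-concentration} to lower-bound $\int|\varphi_{\lambda,x_0}|^2\dd\mu$ by $\tfrac14 N_d(\lambda)^2\mu(B_r(x_0))$, and finish with Frostman. (If anything, your choice $\lambda\in[c_0/(2r),c_0/r]$ is slightly cleaner than the paper's $\lambda\in[c_0 r^{-1},2c_0 r^{-1}]$, since it makes $r\lambda\le c_0$ hold on the nose rather than up to a harmless halving of the ball.)
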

\begin{proof}
    Choosing $c_0 > 0$ as above.
    By Lemma~\ref{lem::lattice-number-lower-bound}, when $r>0$ is sufficiently small, there exists $\lambda \in [ c_0 r^{-1}, 2 c_0 r^{-1} ] \cap \sqrt{\NN}$ such that 
    $N_d(\lambda) \gtrsim_d \lambda^{d-2}$.
    For such $\lambda$ and $x_0 \in \T^d$, define $\varphi_{\lambda,x_0} \in E_\lambda$ as in \eqref{eq::eigenfunction-bourgain}.
    By \eqref{eq::bourgain-planck-concentration} and the trace inequality for $\mu$, there holds
    \begin{equation}
    \label{eq::frostman-estimate}
    \begin{split}
        \mu \bigl(B_{r} (x_0)\bigr) 
        = \int_{B_r (x_0)} \dd \mu 
        & \lesssim  \frac{1}{N_d(\lambda)^2} \int_{\T^d} |\varphi_{\lambda,x_0} |^2 \dd\mu  \\
        & \lesssim_\mu \frac{1}{N_d(\lambda)^2} \int_{\T^d} |\varphi_{\lambda,x_0}|^2 \dd x 
        =  \frac{1}{N_d(\lambda)} \lesssim  \frac{1}{\lambda^{d-2}}
        \lesssim_d r^{d-2}.
    \end{split}
    \end{equation}
    Applying Frostman's Lemma we conclude the proof.
\end{proof}

When $d=2$, the above proposition holds true, but the statement ``\emph{$\mu$ is upper $0$-regular}'' is trivial.
In the following proposition, we obtain a non-trivial estimate.

\begin{proposition}
\label{prop::trace-necessary-sufficient-d=2}
    Let $d=2$. 
    {There exists $C > 0$ such that,}
    if the trace inequality holds for $\mu \in \mathcal{P} (\T^d)$, then for any $r \in (0,1)$ and $x_0 \in \T^2$, we have
    \begin{equation}
        \mu(B_r(x_0)) \lesssim r^{\frac{C}{\ln \ln (1/r)}}.
    \end{equation}
\end{proposition}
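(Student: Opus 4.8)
The plan is to mimic the proof of Proposition~\ref{propnecessary3} but replace the lower bound $N_2(\lambda) \gtrsim \lambda^{d-2} = 1$ (which is useless) with the best available lower bound on the number of lattice points on some circle of radius comparable to $r^{-1}$. The Bourgain eigenfunction $\varphi_{\lambda,x_0}$ still satisfies $|\varphi_{\lambda,x_0}(x)| \ge \tfrac12 N_2(\lambda)$ on $B_r(x_0)$ provided $r\lambda \le c_0$, and $\|\varphi_{\lambda,x_0}\|_{L^2}^2 = N_2(\lambda)$; so the trace inequality gives
\begin{equation*}
    \mu(B_r(x_0)) \lesssim_\mu \frac{1}{N_2(\lambda)^2} \int_{\T^2} |\varphi_{\lambda,x_0}|^2 \dd x = \frac{1}{N_2(\lambda)}.
\end{equation*}
Thus the whole game is to choose, for each small $r$, a radius $\lambda \in [c_0 r^{-1}/2,\, c_0 r^{-1}]\cap\sqrt{\NN}$ (or some comparable dyadic window) for which $N_2(\lambda)$ is as large as possible, and then to convert the resulting bound $\mu(B_r(x_0)) \lesssim 1/N_2(\lambda)$ into the claimed form $r^{C/\ln\ln(1/r)}$.

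The key input is that the number of representations of an integer as a sum of two squares can be quite large: classically, there are integers $n \le N$ with $r_2(n) \ge \exp\{c \ln N / \ln\ln N\}$ for some absolute $c > 0$ (this is the sharp maximal order, attained by taking $n$ to be a product of the first $k$ primes that are $\equiv 1 \bmod 4$). So first I would record this fact (it is the companion to the upper bound $N_2(\lambda) \lesssim \exp\{C\ln\lambda/\ln\ln\lambda\}$ already quoted in Theorem~\ref{thm::cardinality-lattice-sphere}), namely: for every $R$ large there exists $n \in [\,R^2/4,\, R^2\,]$ with $\#\{k \in \Z^2 : |k|^2 = n\} \ge \exp\{c \ln R / \ln\ln R\}$. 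Setting $\lambda = \sqrt{n}$, this $\lambda$ lies in $[R/2, R]$ and has $N_2(\lambda) \ge \exp\{c\ln R/\ln\ln R\}$. Then I take $R \sim c_0 r^{-1}$, so that $\ln R = \ln(1/r) + O(1)$ and $\ln\ln R = \ln\ln(1/r) + o(1)$, and feed this $\lambda$ (which indeed satisfies $r\lambda \le c_0$) into the concentration estimate above:
\begin{equation*}
    \mu(B_r(x_0)) \lesssim_\mu \frac{1}{N_2(\lambda)} \le \exp\Bigl\{-c\,\frac{\ln(1/r)}{\ln\ln(1/r)} + O(1)\Bigr\} \lesssim r^{\frac{C}{\ln\ln(1/r)}}
\end{equation*}
with, say, $C = c/2$ after absorbing the $O(1)$ for $r$ small; for $r$ bounded away from $0$ the statement is trivial since $\mu(B_r(x_0)) \le 1$.

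The main obstacle is making sure the extremal lattice-point bound is available in the precise window I need: I want an integer that is both a sum of two squares with many representations \emph{and} of size comparable to $r^{-2}$, uniformly for all small $r$. The classical construction gives, for each $k$, a specific highly divisible $n_k$ (a product of the first $k$ primes $\equiv 1 \bmod 4$) with $r_2(n_k)$ roughly $4 \cdot 2^k \sim \exp\{c \ln n_k/\ln\ln n_k\}$; but consecutive $n_k$ jump multiplicatively, so they do not land in an arbitrarily prescribed dyadic interval. The standard fix is to multiply $n_k$ by an extra square $m^2$, which does not decrease $r_2$, and choose $k$ and $m$ so that $m^2 n_k$ lands in $[R^2/4, R^2]$ while keeping $r_2(m^2 n_k) \ge r_2(n_k)$ still of the right exponential size relative to $\ln R$ — this forces $k$ to be taken as large as possible subject to $n_k \le R^2$, i.e.\ $k \sim \ln R / \ln\ln R$ by the prime number theorem, which is exactly what delivers the $\ln R/\ln\ln R$ exponent. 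I would state this as a lemma ("for all large $R$ there is $\lambda \in [R/2,R]\cap\sqrt{\NN}$ with $N_2(\lambda) \ge \exp\{c\ln R/\ln\ln R\}$") and cite the standard references on $r_2$ (e.g.\ \cite{Grosswald, IwaniecKowalski}); everything else is a direct repeat of the Bourgain-eigenfunction argument of Proposition~\ref{propnecessary3}. A closing remark worth including: combined with the upper bound on $N_2(\lambda)$, this shows the exponent $C/\ln\ln(1/r)$ is essentially sharp for this method, paralleling the discussion of Fourier dimension in dimension $2$.
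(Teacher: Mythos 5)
Your proposal is correct and follows essentially the same route as the paper: use the Bourgain eigenfunction $\varphi_{\lambda,x_0}$ together with the maximal order of the number of lattice points on a circle, achieved by radii $\lambda$ with $\lambda^2$ a product of many small primes $\equiv 1 \pmod 4$, so that $N_2(\lambda) \gtrsim \exp\{c\ln\lambda/\ln\ln\lambda\}$. The only difference from the paper's writeup is how you land $\lambda$ in the prescribed window: you multiply the extremal integer by a square $m^2$ (using $r_2(m^2 n) \ge r_2(n)$), whereas the paper takes $\lambda^2 = P_n$ directly, observes that the ratio $\sqrt{P_n/P_{n-1}} \le \sqrt{n}$ controls the overshoot past $r^{-1}$, and compensates by shrinking the ball radius to $\rho = r/\sqrt{\ln(1/r)}$; both devices are standard and yield the same exponent.
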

\begin{proof} 
    We follow a line of argument already appearing in \cite{GermainMoyanoZhu2024vanishing}.
    Let $P_n$ be the product of all prime numbers which are $\equiv 1 \pmod{4}$ and $\le n$. By \cite{McCurley-Kevin1984prime} and \cite{McCurley-Kevin1984-Lfunction}, we have $\ln P_n \sim n$. For $r > 0$, choose $n$ such that $\sqrt{P_{n-1}} \le 1/r\le \sqrt{P_n}$, then $n \sim \ln (1/r)$.
    Then there holds
    \begin{equation}
    \label{eq::primorial-upper-bound}
        \sqrt{P_n} \le \frac{1}{r} \frac{\sqrt{P_n}}{\sqrt{P_{n-1}}} \le \frac{\sqrt{n}}{r} \lesssim \frac{\sqrt{\ln(1/r)}}{r}.
    \end{equation}
    Next, let $\pi_n$ be the number of prime numbers $\le n$ which are $\equiv 1 \pmod{4}$ and recall that, by the prime number theorem for arithmetic progressions,
    $\pi_n \sim n/\ln n$.
    
    Let $r>0$ be sufficiently small and let $\lambda = \sqrt{P_n}$.
    By \eqref{eq::primorial-upper-bound}, we have $1/r \le \lambda \lesssim \sqrt{\ln(1/r)}/r$.
    Let $\rho = r/\sqrt{\ln(1/r)}$.
    We claim that,  when $r \le e^{-1}$ (which is always the case since $r$ is sufficiently small), there holds $\rho \le r \le \rho \sqrt{\ln(1/\rho)}$.
    Indeed, by definition $\rho \le r/\sqrt{\ln e} = r$ and $r = \rho \sqrt{\ln(1/r)} \le \rho \sqrt{\ln(1/\rho)}$.
    Combining these estimates gives
    \begin{equation*}
        \frac{1}{\rho \sqrt{\ln(1/\rho)}} \le \lambda \lesssim \frac{1}{\rho}.
    \end{equation*}
    Therefore, by Jacobi (see \cite{Grosswald}) and the definition of the primorial $P_n$, letting $d_j(n)$ be the number factors of $n$ that are $\equiv j \pmod{4}$, then for some $C,C',C''>0$,
    \begin{equation}
    \label{eq::lattice-number-primorial-est-d=2}
    \begin{split}
        N_2(\lambda) 
        & = 4 \bigl(d_1(P_n) - d_3(P_n)\bigr)
        = 4 d_1(P_n) \\
        & = 4 \times 2^{\pi_n}
        \ge 4 \times 2^{C n/\ln n}
        \gtrsim \lambda^{C'/\ln\ln\lambda}
        \ge \rho^{-C''/\ln\ln(1/\rho)}.
    \end{split}
    \end{equation}
    
    For $x_0 \in \T^2$, define $\varphi_{\lambda,x_0}$ as in \eqref{eq::eigenfunction-bourgain}.
    Fix a sufficiently small $\delta>0$ (independent of $r$ or $\lambda$) such that, by \eqref{eq::bourgain-planck-concentration}, if $x \in B_{\delta\rho}(x_0)$, then  $|\phi_{\lambda,x_0}(x)| \gtrsim N_2(\lambda)$.
    Arguing as in \eqref{eq::frostman-estimate} gives
    \begin{equation*}
        \mu \bigl(B_{\delta\rho} (x_0)\bigr) 
        =  \frac{1}{N_2(\lambda)}
        \lesssim \rho^{-C''/\ln\ln(1/\rho)}.
        \qedhere
    \end{equation*}
\end{proof}

When $d=3,4$, the number $N_d(\lambda)$ exceeds $\lambda^{d-2}$ by an arbitrarily large factor.
This causes the failure of the trace inequality for these critical measures, as is stated in the following theorem.

\begin{proposition} \label{prop:d3d4}
    When $d=3,4$ the  trace inequality does not hold for $\mu \in \mathcal{P} (\T^d)$ if for all $\lambda \in \sqrt{\NN}$, there exists $x_\lambda \in \T^d$ such that
    \begin{equation}
    \label{eq::sequential-lower-(d-2)-regular}
        \mu\bigl(B_{\lambda^{-1}}(x_\lambda)\bigr) \gtrsim \lambda^{2-d}.
    \end{equation}
\end{proposition}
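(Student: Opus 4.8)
The plan is to contradict the trace inequality by testing it against the Bourgain eigenfunctions $\varphi_{\lambda,x_0}$ of \eqref{eq::eigenfunction-bourgain} along a sequence of parameters $\lambda$ for which the sphere $\mathcal{S}^{d-1}_\lambda$ carries anomalously many lattice points — which is exactly what separates dimensions $3$ and $4$ from $d\ge5$. So, suppose for contradiction that the trace inequality holds for $\mu$ with some finite constant $\mathfrak{C}_\mu$. By Theorem \ref{thm::cardinality-lattice-sphere}, when $d=3,4$ the ratio $N_d(\lambda)/\lambda^{d-2}$ is unbounded along $\sqrt{\NN}$: there exist $\lambda_j\in\sqrt{\NN}$ with $\lambda_j\to\infty$ and $N_d(\lambda_j)\ge A_j\lambda_j^{d-2}$, $A_j\to\infty$. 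For $d=3$ this is the statement $\limsup_\lambda N_3(\lambda)/(\lambda\ln\ln\lambda)>0$; for $d=4$ one may simply take $\lambda_j^2$ to be the product of the first $j$ odd primes, so that $N_4(\lambda_j)=r_4(\lambda_j^2)=8\sigma(\lambda_j^2)$ and $\sigma(\lambda_j^2)/\lambda_j^2=\prod_{p\mid\lambda_j^2}(1+p^{-1})\to\infty$.

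Next I would match the two relevant length scales. For $\lambda=\lambda_j$, the hypothesis \eqref{eq::sequential-lower-(d-2)-regular} furnishes a point $x_{\lambda_j}$ with $\mu\bigl(B_{\lambda_j^{-1}}(x_{\lambda_j})\bigr)\gtrsim\lambda_j^{2-d}$, whereas the concentration bound \eqref{eq::bourgain-planck-concentration} only controls $\varphi_{\lambda_j,\cdot}$ on balls of the smaller radius $c_0\lambda_j^{-1}$ (with $c_0$ the absolute constant from that estimate). Covering $B_{\lambda_j^{-1}}(x_{\lambda_j})$ by $O_d(c_0^{-d})$ balls of radius $c_0\lambda_j^{-1}$ and invoking the pigeonhole principle produces a point $y_j\in\T^d$ with $\mu\bigl(B_{c_0\lambda_j^{-1}}(y_j)\bigr)\gtrsim_d\lambda_j^{2-d}$.

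The main step is then to test the trace inequality on $\varphi_j:=\varphi_{\lambda_j,y_j}\in E_{\lambda_j}$. On the one hand $\|\varphi_j\|_{L^2}^2=N_d(\lambda_j)$. On the other hand, by \eqref{eq::bourgain-planck-concentration} we have $|\varphi_j|\ge\tfrac12 N_d(\lambda_j)$ on $B_{c_0\lambda_j^{-1}}(y_j)$, hence
\[
    \int_{\T^d}|\varphi_j|^2\dd\mu \;\ge\; \tfrac14\,N_d(\lambda_j)^2\,\mu\bigl(B_{c_0\lambda_j^{-1}}(y_j)\bigr) \;\gtrsim_d\; N_d(\lambda_j)^2\,\lambda_j^{2-d}.
\]
The trace inequality then forces $N_d(\lambda_j)^2\lambda_j^{2-d}\lesssim_d\mathfrak{C}_\mu N_d(\lambda_j)$, i.e.\ $\mathfrak{C}_\mu\gtrsim_d N_d(\lambda_j)/\lambda_j^{d-2}=A_j\to\infty$, contradicting $\mathfrak{C}_\mu<\infty$.

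The argument is short, and the only delicate points are the scale-matching pigeonhole above and, above all, pinning down the arithmetic input that $N_d(\lambda)/\lambda^{d-2}$ is unbounded precisely when $d=3,4$. This is the heart of the matter: Proposition \ref{propnecessary3} shows that the trace inequality forces upper $(d-2)$-regularity of $\mu$, but the present proposition shows that in dimensions $3$ and $4$ upper $(d-2)$-regularity is far from sufficient — a measure that is genuinely $(d-2)$-dimensional at the scales $\lambda^{-1}$, $\lambda\in\sqrt{\NN}$, is already incompatible with the trace inequality.
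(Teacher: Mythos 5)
Your proof is correct and takes essentially the same route as the paper: test the trace inequality against the Bourgain eigenfunctions $\varphi_{\lambda,x_0}$ of \eqref{eq::eigenfunction-bourgain} concentrated at the points $x_\lambda$, and exploit the arithmetic fact (Theorem~\ref{thm::cardinality-lattice-sphere}) that $N_d(\lambda)/\lambda^{d-2}$ is unbounded along a subsequence precisely when $d=3,4$. Your covering-and-pigeonhole step to pass from the hypothesis ball $B_{\lambda^{-1}}(x_\lambda)$ to the smaller concentration ball $B_{c_0\lambda^{-1}}(y)$ is a welcome technical refinement: the paper's proof directly asserts $\mu\bigl(B_{\delta\lambda_n^{-1}}(x_{\lambda_n})\bigr)\gtrsim_\delta \lambda_n^{2-d}$ without reconciling the two scales, and your argument fills exactly that small gap.
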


\begin{remark}
    It is known that (Theorem~\ref{thm::cardinality-lattice-sphere}), for $d=3,4$, there exists $(\lambda_n)_{n \ge 0} \in \sqrt{\NN}$ such that
    \begin{equation}
    \label{eq::number-lattice-limsup-d=3-4}
        \lim_{n \to \infty} \frac{N_d(\lambda_n)}{\lambda_n^{d-2} \ln \ln \lambda_n} > 0.
    \end{equation}
    From the proof below, one immediately sees that the condition \eqref{eq::sequential-lower-(d-2)-regular} can be relaxed to
    \begin{equation}
        \limsup_{n \to \infty} \mu\bigl(B_{\lambda_n^{-1}}(x_{\lambda_n})\bigr)   \lambda_n^{d-2} \ln\ln \lambda_n   = \infty.
    \end{equation}
\end{remark}

\begin{proof}
    Let $\lambda_n$ satisfy \eqref{eq::number-lattice-limsup-d=3-4} and define $\varphi_n = \varphi_{\lambda_n,x_{\lambda_n}}$ as in \eqref{eq::eigenfunction-bourgain}.
    By \eqref{eq::bourgain-planck-concentration}, there exists $\delta \in (0,1)$ such that that $|\varphi_n(x)| \gtrsim N_d(\lambda_n)$ for all $x\in B_{\delta \lambda_n^{-1}}(x_{\lambda_n})$.
    Therefore, as $n \to \infty$,
    \begin{equation*}
        \frac{1}{N_d(\lambda_n)} \int_{\T^d} |\varphi_n|^2 \dd \mu  \gtrsim
        N_d(\lambda_n) \mu(B_{\delta \lambda_n^{-1}}(x_{\lambda_n} ))
        \gtrsim_\delta \ln \ln \lambda_n \to \infty. \qedhere
    \end{equation*}
\end{proof}

We conclude this section with the following proposition, which excludes the possibility of observability on linear subspaces of codimension 2. Unlike previous examples, whose constructions relied on eigenfunctions concentrating at a point, the proof here involves a sequence of eigenfunctions concentrating along the entire subspace. For simplicity, the subspace considered is aligned with the coordinate axes; however, the argument should extend to arbitrary rational linear subspaces of codimension 2.

\begin{proposition}
    For $d \geq 3$, the trace inequality fails for $\mu \in \mathcal{P}(\T^d)$ if
    \begin{equation*}
        \supp \mu \subset S = \{x \in \T^d: x_1=x_2 =0 \}.
    \end{equation*}
\end{proposition}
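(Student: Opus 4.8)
The set $S = \{x_1 = x_2 = 0\}$ is a rational subtorus of $\T^d$ of codimension $2$, canonically isometric to $\T^{d-2}$. The plan is to exploit this by constructing eigenfunctions on $\T^d$ all of whose frequencies lie in the coordinate $2$-plane $\{k \in \Z^d : k_3 = \dots = k_d = 0\}$; such a function depends only on $(x_1,x_2)$ and is therefore \emph{constant} along $S$. Taking it to be a Bourgain-type eigenfunction in the variables $(x_1,x_2)$ concentrating at the origin of $\T^2$ (in the sense of \eqref{eq::bourgain-planck-concentration} applied in dimension $2$), it concentrates along all of $S$, which is precisely the mechanism needed to break the trace inequality — in line with the remark preceding the statement that the relevant eigenfunctions concentrate along the whole subspace rather than at a point.

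\textbf{Construction and estimate.} Choose a sequence $\lambda_n \in \sqrt{\NN}$ along which $N_2(\lambda_n) \to \infty$; concretely one may take $\lambda_n^2 = 5^n$, so that $N_2(\lambda_n) = 4(n+1) \to \infty$, or the primorials $P_n$ of the proof of Proposition~\ref{prop::trace-necessary-sufficient-d=2}. Set
\begin{equation*}
    u_n(x) = \sum_{\substack{(k_1,k_2) \in \Z^2 \\ k_1^2 + k_2^2 = \lambda_n^2}} e^{2\pi i (k_1 x_1 + k_2 x_2)},
\end{equation*}
viewed as a function on $\T^d$; equivalently, $u_n$ is the two-dimensional Bourgain eigenfunction $\varphi_{\lambda_n,0}$ in the first two variables. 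Since each frequency $(k_1,k_2,0,\dots,0)$ lies on $\lambda_n \mathbb{S}^{d-1}$, we have $u_n \in E_{\lambda_n}$. By Parseval, $\int_{\T^d} |u_n|^2 \dd x = N_2(\lambda_n)$. On the other hand, for $x \in S$ we have $x_1 = x_2 = 0$, hence $u_n \equiv N_2(\lambda_n)$ on $S$; since $\supp \mu \subset S$ and $\mu(\T^d) = 1$, this gives $\int_{\T^d} |u_n|^2 \dd \mu = N_2(\lambda_n)^2$. Therefore
\begin{equation*}
    \frac{\int_{\T^d} |u_n|^2 \dd \mu}{\int_{\T^d} |u_n|^2 \dd x} = N_2(\lambda_n) \xrightarrow{\ n \to \infty\ } \infty,
\end{equation*}
so $\mathfrak{C}_\mu = \infty$ and the trace inequality fails.

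\textbf{Obstacles.} There is no real obstacle here: the construction is explicit and both computations are exact, the only external input being the classical unboundedness of the representation function $\lambda \mapsto N_2(\lambda)$. The concentration estimate \eqref{eq::bourgain-planck-concentration} is not strictly needed for the argument, but it explains the picture: $|u_n| \gtrsim N_2(\lambda_n)$ on a tube of radius $\sim \lambda_n^{-1}$ around $S$, while $\|u_n\|_{L^2(\T^d)}^2 = N_2(\lambda_n)$, so the mass of $u_n$ collapses onto a neighbourhood of $S$. The only point requiring extra work for an arbitrary \emph{rational} codimension-$2$ subspace (as alluded to before the statement) is to replace $k_1^2 + k_2^2$ by the binary quadratic form induced on the orthogonal rational $2$-plane and to invoke the analogue of the unboundedness of $N_2$ for that form; the rest of the argument is unchanged.
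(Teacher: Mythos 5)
Your proof is correct and is essentially the paper's argument: both build the two-dimensional Bourgain eigenfunction $\sum_{k_1^2+k_2^2=\lambda_n^2} e^{2\pi i(k_1x_1+k_2x_2)}$, observe it is constant equal to $N_2(\lambda_n)$ on $S$, and let $N_2(\lambda_n)\to\infty$ blow up the ratio. The only (immaterial) difference is the choice of sequence: you propose $\lambda_n^2 = 5^n$ (which indeed gives the clean count $N_2(\lambda_n)=4(n+1)$), while the paper reuses the primorial sequence $\lambda_n=\sqrt{P_n}$ from its earlier $d=2$ analysis; both suffice since all that is needed is $N_2(\lambda_n)\to\infty$.
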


\begin{proof}
    For $n \ge 0$, let $\lambda_n = \sqrt{P_n}$ where the primorial $P_n$ is defined in the proof of Proposition~\ref{prop::trace-necessary-sufficient-d=2}.
    By \eqref{eq::lattice-number-primorial-est-d=2}, we have $\lim_{n \to \infty} N_2(\lambda_n) = \infty$.
    For $x = (x_1,\ldots,x_d) \in \T^d$, put
    \begin{equation*}
        g_n(x) = \sum_{(k_1,k_2) \in \mathcal{S}^1_\lambda} e^{2\pi i (k_1 x_1 + k_2 x_2)}.
    \end{equation*}
    Then $\| g_n \|_{L^2} = \sqrt{N_2(\lambda_n)}$ and $g_n(x) = N_2(\lambda_n)$ for $x \in S$.
    We conclude with
    \begin{equation*}
        \lim_{n \to \infty} \frac{\int_{\T^{d}} |g_n(x)|^2  \dd  \mu }{\int_{\T^d} |g_n(x)|^2 \dd x} 
        = \lim_{n \to \infty} N_2(\lambda_n) = \infty. \qedhere
    \end{equation*}
\end{proof}

\section{Necessary conditions for the  observability inequality} \label{sec:observability-nec}

Before considering general measures, see Proposition \ref{prop:no-observability-d-2}, we start with the case of superficial measures on submanifolds.

\begin{proposition}[Necessary conditions for observability on local manifolds]\label{local}
    Let $d\geq 3$ and let $S \subset \T^d$ be a compact, non-trivial and smooth submanifold of integer dimension $\beta \le d-2$.
    There exists an $\epsilon >0$ such that for any $x_0 \in S$, the observability inequality does not hold for the probability measure $\mu_{x_0,\epsilon} \in \mathcal{P}(\T^d)$ defined by
    \begin{equation*}
        \dd \mu_{x_0, \epsilon} = \frac{\mathbf{1}_{B_\epsilon(x_0) \cap S}}{\mathcal{H}^\beta (B_\epsilon(x_0) \cap S)}  \dd \mathcal{H}^\beta. 
    \end{equation*}
    Precisely, for any $x_0\in S$, there exists an increasing sequence $(\lambda_n)_{n \ge 0}$ in $\sqrt{\NN}$ and a sequence of $L^2$-normalized eigenfunctions $(u_n)_{n\ge 0}$ with $u_n \in E_{\lambda_n}$ such that 
    \begin{equation}
    \label{eq::lim-no-obs-local}
        \lim_{n \to \infty}
        \int_{\T^d} |u_n|^2 \dd \mu_{x_0, \epsilon} = 0.
    \end{equation}
\end{proposition}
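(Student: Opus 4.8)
The plan is to build, for each $x_0 \in S$, a sequence of eigenfunctions that concentrates away from a fixed neighborhood of $x_0$ on $S$, so that its mass with respect to $\mu_{x_0,\epsilon}$ tends to zero while its $L^2$-norm stays fixed. Since $\dim S = \beta \le d-2$, after a rotation/translation we may assume $S$ is locally the graph of a smooth map over a $\beta$-dimensional coordinate plane near $x_0$; in particular $S$ lies, near $x_0$, inside an affine subspace of dimension $\le d-2$ up to curvature corrections, and the normal directions to $S$ at $x_0$ span a space of dimension $\ge 2$. The idea is to use an eigenfunction which is (a) built from frequencies lying in a rational $2$-plane roughly transverse to $S$ at $x_0$, so that it essentially depends only on two "transverse" coordinates, and (b) designed to vanish near $x_0$ in those two coordinates, exploiting that an eigenfunction of $\Delta$ on $\T^2$ (embedded as a coordinate $2$-subtorus) can vanish on a small open set's worth of the transverse slice. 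Concretely, mimicking the construction at the end of Section~\ref{sec:trace-nec} (the functions $g_n$), take $\lambda_n = \sqrt{P_n}$ so that $N_2(\lambda_n) \to \infty$, and let
\begin{equation*}
    u_n(x) = c_n \sum_{(k_1,k_2) \in \mathcal{S}^1_{\lambda_n}} a_k\, e^{2\pi i (k_1 x_1 + k_2 x_2)},
\end{equation*}
where the coefficients $a_k$ are chosen so that the trigonometric polynomial $\psi_n(x_1,x_2) = \sum a_k e^{2\pi i (k_1 x_1 + k_2 x_2)}$ vanishes on the disc of radius $\epsilon$ (in the $(x_1,x_2)$-variables) around the image of $x_0$, and $c_n$ normalizes $u_n$ in $L^2(\T^d)$. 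Here I assume the coordinates are chosen so that $x_0$ corresponds to $(x_1,x_2)=(0,0)$ and $S$ is, near $x_0$, a graph $x_1 = \gamma_1(y), x_2 = \gamma_2(y)$ over the remaining $\beta$ coordinates with $\gamma_i(0)=0$.

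The key analytic step is the vanishing construction: I need a nonzero $u \in E_{\lambda_n}$, using only frequencies in the $(k_1,k_2)$-plane, that vanishes identically on $\{x_1^2+x_2^2 \le \epsilon^2\} \times \T^{d-2}$ for a fixed $\epsilon$ independent of $n$. Since the frequencies $(k_1,k_2)$ all lie on $\mathcal{S}^1_{\lambda_n}$ (so $u$ restricted to the $(x_1,x_2)$-torus is a $2$-d eigenfunction), and since there are $N_2(\lambda_n) \to \infty$ of them, I can impose $\sim N_2(\lambda_n)$ linear vanishing conditions (e.g.\ vanishing of enough Taylor coefficients at the origin, or vanishing on a fixed finite net inside the disc together with a crude quantitative unique-continuation/Remez-type bound) and still have a nontrivial solution once $N_2(\lambda_n)$ exceeds the number of conditions; the point is that $N_2(\lambda_n) \to \infty$ allows an $\epsilon$-disc worth of vanishing in the limit. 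Alternatively, and more robustly, I would run the argument by contradiction on a subsequence: if no such uniformly-vanishing sequence existed, a normal-families/compactness argument on the rescaled eigenfunctions would contradict the growth $N_2(\lambda_n)\to\infty$. Either way, with such $u_n$ in hand, on $B_\epsilon(x_0)\cap S$ we have $(x_1,x_2) = (\gamma_1(y),\gamma_2(y))$ with $|(\gamma_1(y),\gamma_2(y))|$ small for $y$ near $0$, so (shrinking $\epsilon$ once more so that the graph stays inside the vanishing disc) $u_n$ vanishes on $B_\epsilon(x_0)\cap S$, hence $\int_{\T^d}|u_n|^2\dd\mu_{x_0,\epsilon} = 0$ for all large $n$, which is even stronger than \eqref{eq::lim-no-obs-local}.

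The main obstacle is the vanishing construction made quantitative and uniform in $n$: I must ensure the disc radius $\epsilon$ does not shrink as $\lambda_n \to \infty$, and that the curved submanifold $S$ (not just a flat coordinate plane) actually sits inside the vanishing region. The first issue is handled by the counting input $N_2(\lambda_n)\to\infty$ (Theorem~\ref{thm::cardinality-lattice-sphere} / equation~\eqref{eq::lattice-number-primorial-est-d=2}) together with a fixed budget of vanishing conditions, combined with a quantitative unique continuation estimate for trigonometric polynomials of the form $\sum_{|k|=\lambda_n}a_k e^{2\pi i k\cdot z}$ on $\T^2$ (a Turán/Nazarov-type lemma) to upgrade vanishing on a net to vanishing-of-small-size on the disc; the second issue is purely local and is dispatched by the implicit function theorem, choosing $\epsilon$ small enough (depending only on the $C^2$ geometry of $S$ near $x_0$, not on $n$) that $B_\epsilon(x_0)\cap S$ lies in the preimage of a disc of radius $\epsilon/2$ under the transverse projection. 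A technical point worth flagging: one needs the transverse $2$-plane to be \emph{rational} so that $\mathcal{S}^1_{\lambda_n}$ genuinely embeds into $\mathcal{S}^{d-1}_{\lambda_n}$; for a general smooth $S$ one first rotates so that the tangent space $T_{x_0}S$ is contained in a rational coordinate subspace of dimension $d-2$, which is possible precisely because $\beta \le d-2$ gives at least two free normal directions and the rational directions are dense — this is the step the proposition's own remark alludes to when it says "the argument should extend to arbitrary rational linear subspaces."
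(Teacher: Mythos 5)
Your plan has a fatal gap right at its core. You want a nonzero eigenfunction $u_n$ depending only on $(x_1,x_2)$, with $\|u_n\|_{L^2}=1$, that ``vanishes identically on $\{x_1^2+x_2^2 \le \epsilon^2\} \times \T^{d-2}$'' for a fixed $\epsilon>0$. A trigonometric polynomial is real-analytic, so if it vanishes on a nonempty open set it vanishes identically; there is no such $u_n$. Relaxing ``vanishing'' to ``smallness'' does not save the argument either: by Zygmund's theorem (Theorem~\ref{thm::ZBR}, observability on $\T^2$ for any set of positive Lebesgue measure), an $L^2$-normalized trigonometric polynomial $\psi_n$ on $\T^2$ with spectrum in $\mathcal{S}^1_{\lambda_n}$ satisfies $\|\psi_n\|_{L^2(B_{\epsilon'}(0))} \gtrsim_{\epsilon'} 1$ for every fixed disc $B_{\epsilon'}(0)$, uniformly in $n$. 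So your target (a sequence that concentrates away from a fixed 2D disc) is forbidden precisely by the two-dimensional version of the very phenomenon you are studying. The Turán/Nazarov-type estimate you invoke pushes in exactly the wrong direction: it gives \emph{lower} bounds on sparse trigonometric polynomials restricted to positive-measure sets, which is an obstruction to your construction, not a tool for it. Finally, the rotation you propose --- bringing $T_{x_0}S$ into a rational coordinate $(d-2)$-subspace via a lattice-preserving map --- is not generally achievable: $GL_d(\Z)$ acts discretely on Grassmannians, and a generic tangent plane cannot be aligned with a coordinate plane; density of rational directions does not help because you need exact alignment for $\mathcal{S}^1_{\lambda_n}$ to sit inside $\mathcal{S}^{d-1}_{\lambda_n}$.

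The paper's proof circumvents all of this by never asking the eigenfunction to be small on an open subset of $\T^d$. Instead, it restricts $u$ to the $\beta$-dimensional manifold $S$ near $x_0$, localizes with a cutoff $\chi(x'/\epsilon)$, and regards the resulting function $g_\lambda$ as living on a small torus $\T^\beta_{4\epsilon}$. It then solves the linear system $\widehat{g}_\lambda(\ell)=0$ for $|\ell|\le\eta\lambda$ --- about $\eta^\beta \lambda^\beta$ equations --- in the $N_d(\lambda)\gtrsim\lambda^{d-2}$ unknowns $(a_k)_{k\in\mathcal{S}^{d-1}_\lambda}$, which is underdetermined precisely because $\beta\le d-2$ and $\eta$ is small. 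The remaining high Fourier modes of $g_\lambda$ are then shown to be rapidly decaying by a non-stationary phase argument (Lemma~\ref{nonstat}), exploiting that $|k|\le\lambda < |\ell|/\eta$ forces $|\nabla\psi^\epsilon_{k,\ell}|\gtrsim|\ell|$. This gives $\|g_{\lambda_n}\|_{L^2}^2 = o(\lambda_n^{-N})$ for arbitrary $N$, hence $\int |u_n|^2\,\dd\mu_{x_0,\epsilon}\to 0$. The crucial structural point your proposal misses is that the linear constraints must be imposed on the \emph{restriction} of the eigenfunction to the codimension-$\ge 2$ submanifold (where the information loss makes the system underdetermined), not on the eigenfunction itself on an open set (where analyticity and $2$D observability make any such construction impossible). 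I would encourage you to redo the count: on $\T^d$ the useful budget of unknowns is the full $N_d(\lambda)\sim\lambda^{d-2}$ and the useful constraint count is $\lambda^\beta$ on $S$; your proposal replaces both by two-dimensional quantities and consequently cannot close.
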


\begin{proposition}[Necessary conditions for observability on global manifolds]\label{global}
    Let $d \ge 3$ and $S \subset \T^d$ be a compact, non-trivial, smooth submanifold of integer dimension $\beta$, where $\beta \le d-2$ for $d=3,4$ and $\beta \leq d-3$ for $d\ge 5$.
    Then the  observability inequality does not hold for the probability measure $\mu \in \mathcal{P}(\T^d)$ defined by
    \begin{equation*}
        \dd \mu = \frac{\mathbf{1}_{S}}{\mathcal{H}^\beta (S)}  \dd \mathcal{H}^\beta. 
    \end{equation*}
    Precisely, there exists an increasing sequence $(\lambda_n)_{n \ge 0}$ in $\sqrt{\NN}$ and a sequence of $L^2$-normalized eigenfunctions $(u_n)_{n\ge 0}$ with $u_n \in E_{\lambda_n}$ such that 
    \begin{equation}
    \label{eq::lim-no-obs-global}
        \lim_{n \to \infty}
        \int_{\T^d} |u_n|^2 \dd \mu = 0.
    \end{equation}
\end{proposition}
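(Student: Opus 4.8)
The plan is to construct, for suitable eigenvalues $\lambda_n^2$, eigenfunctions that vanish identically on the submanifold $S$, or more precisely concentrate their $L^2$-mass away from a neighborhood of $S$. The starting point is the observation that since $S$ has dimension $\beta \le d-2$ (or $\le d-3$ when $d\ge 5$), it has empty interior and, crucially, one expects to find enough lattice points on $\lambda_n\mathbb{S}^{d-1}$ lying in (or clustering near) a single rational hyperplane so that the associated eigenfunction is either constant or slowly varying in the normal directions while oscillating tangentially --- forcing cancellation along $S$. Concretely, I would first reduce, by compactness of $S$ and a partition-of-unity/localization argument, to a statement about the local behaviour near a point $x_0 \in S$; this is essentially Proposition \ref{local}, so the real content of the global version is to upgrade the local vanishing to a genuine global sequence with the \emph{same} eigenvalues working simultaneously, using that $\mathcal{H}^\beta(S) < \infty$ and that $S$ is covered by finitely many coordinate charts.

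The key construction, in the spirit of the Bourgain eigenfunctions $\varphi_{\lambda,x_0}$ of \eqref{eq::eigenfunction-bourgain} but dualized, would be as follows. Pick $\lambda_n \in \sqrt{\NN}$ (via Lemma \ref{lem::lattice-number-lower-bound}, or via the primorial construction of Proposition \ref{prop::trace-necessary-sufficient-d=2} to get many lattice points) so that $\mathcal{S}^{d-1}_{\lambda_n}$ is large. Using Lemma \ref{lemma:Connes} (Connes' clustering), decompose $\mathcal{S}^{d-1}_{\lambda_n}$ into clusters $\Omega_\alpha$, each lying in an affine subspace of dimension $\le d-1$ and mutually $\lambda_n^{2/(d+1)!}$-separated. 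I would then choose an eigenfunction $u_n = \sum_{k}\widehat{u}_k e^{2\pi i k\cdot x}$ whose frequencies are supported on lattice points so positioned that, after restriction to a chart of $S$, the phases $k\cdot x$ vary over $S$ only through a low-dimensional rational projection; by choosing $\widehat{u}_k$ to be a character that is "orthogonal" to the sublattice seen by $S$ (or by the stationary-phase/equidistribution heuristic that a sum of $\sim \lambda_n^{d-2}$ oscillating exponentials restricted to a positive-codimension set has $L^2$-mass $o(1)$ relative to its global mass), one gets $\int_S |u_n|^2\,d\mathcal{H}^\beta = o(\|u_n\|_{L^2}^2)$. The dimension thresholds ($\beta\le d-2$ vs $\beta\le d-3$ for $d\ge 5$) enter precisely through the cap-counting estimates of Lemma \ref{lemma:spherical-cap}: the number of frequencies that can "see" a $\beta$-dimensional piece of $S$ (i.e.\ that lie in a $\lambda_n^{-1}$-cap adapted to the normal bundle of $S$) must be a negligible fraction of $N_d(\lambda_n)\sim\lambda_n^{d-2}$, which forces $\beta < d-2$ in general dimension but, thanks to the sharper bound $N_d(\lambda,r)\lesssim \lambda^{-1}r^{d-1}+\lambda^\epsilon r^{d-3+\epsilon}$, only $\beta \le d-3$ when $d\ge 5$.

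The main obstacle, I expect, is exactly this quantitative frequency-counting step: showing that for a \emph{curved} submanifold $S$ the mass $\int_S|u_n|^2$ is genuinely small, uniformly over the chart decomposition. Restricting a single exponential $e^{2\pi i k\cdot x}$ to $S$ gives something of size $1$ pointwise, so the smallness must come entirely from cancellation among the $N_d(\lambda_n)$ terms after integrating against $d\mathcal{H}^\beta$; this is a (non-self-adjoint) restriction estimate for the eigenspace $E_{\lambda_n}$ tested against $\mu$, i.e.\ precisely a \emph{lower} bound on $\mathfrak{c}_\mu$ disguised as an upper bound on a restricted quadratic form. I would handle it by expanding $\int_S|u_n|^2\,d\mathcal{H}^\beta = \sum_{k,\ell\in\mathcal{S}^{d-1}_{\lambda_n}}\widehat{u}_k\overline{\widehat{u}_\ell}\,\widehat{\sigma_S}(k-\ell)$, where $\widehat{\sigma_S}$ is the Fourier transform of the surface measure on $S$; by the curvature of $S$ (non-trivial submanifold) one has decay $|\widehat{\sigma_S}(\xi)|\lesssim |\xi|^{-\gamma}$ for some $\gamma>0$ on a set of directions of full measure, and the diagonal $k=\ell$ contributes exactly $\|u_n\|_{L^2}^2\cdot\widehat{\sigma_S}(0)$, which one wants to \emph{cancel} --- so in fact I would choose $\widehat{u}_k$ supported on a single Connes cluster $\Omega_\alpha$ living in a proper affine subspace, arrange that this subspace is transverse to $S$ so that the phases $k\cdot x$, $k\in\Omega_\alpha$, are \emph{non-constant} on every chart of $S$, and then get the decay from $\#\Omega_\alpha \lesssim_d \lambda_n^{\epsilon}$ (few terms) combined with oscillation: each off-diagonal term decays and the diagonal is beaten by choosing $\widehat{u}_k$ with mean zero along $S$-tangential directions via a pigeonhole/averaging over translates $x_0\in S$. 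Combining with \eqref{eq::lim-no-obs-local} and the finiteness $\mathcal{H}^\beta(S)<\infty$ then yields \eqref{eq::lim-no-obs-global}.
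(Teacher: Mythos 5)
Your proposal diverges significantly from the paper's argument and contains a gap that would be fatal for flat submanifolds. The paper's proof of Proposition~\ref{global} is an elementary linear-algebra construction, not an oscillatory-integral cancellation argument: one covers $S$ by $K$ coordinate charts, writes the trace of a candidate $u_n = \sum_k a_k e^{2\pi i k\cdot x}$ on each chart as a cut-off trigonometric polynomial $g^j_{\lambda_n}$ on $\T^\beta_{4\epsilon}$, and imposes the linear constraints $\widehat{g}^j_{\lambda_n}(\ell) = 0$ for all $|\ell| \le \lambda_n$ and all $j$. The number of constraints is $\lesssim_\beta K\lambda_n^\beta$, while the number of unknowns $(a_k)_{k\in\mathcal{S}^{d-1}_{\lambda_n}}$ is $N_d(\lambda_n)$, which one makes large by choosing $\lambda_n$ along the subsequences of Theorem~\ref{thm::cardinality-lattice-sphere} (for $d=3,4$ one needs the $\ln\ln\lambda_n$ enhancement; for $d\ge 5$ one uses $N_d(\lambda_n)\sim\lambda_n^{d-2}$). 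The dimension thresholds $\beta\le d-2$ (resp.\ $\le d-3$) come \emph{only} from this count of equations versus unknowns, not from the cap-counting Lemma~\ref{lemma:spherical-cap}, which is not used here. Once the low Fourier modes of the trace are killed, the remaining modes $|\ell|>\lambda_n$ decay rapidly by non-stationary phase (Lemma~\ref{nonstat}), because the phase gradient $|k'-\ell/(4\epsilon)|$ cannot vanish when $|k'|\le\lambda_n < |\ell|/\epsilon$; that is the entire mechanism, and it requires no curvature of $S$.

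Your approach instead hinges on Fourier decay of the surface measure $\widehat{\sigma_S}$ "by the curvature of $S$" and on choosing frequencies in a single Connes cluster "transverse to $S$." But the proposition makes no curvature assumption on $S$ — flat $\beta$-planes are admissible — so the claimed decay of $\widehat{\sigma_S}$ simply does not hold in general, and with it the proposed off-diagonal smallness collapses. Moreover you correctly observe that the diagonal contribution $\sum_k|\widehat{u}_k|^2\,\widehat{\sigma_S}(0)$ is of full size, but the proposal to cancel it via "pigeonhole/averaging over translates $x_0\in S$" and "mean zero along $S$-tangential directions" is not a construction; one cannot kill a positive quadratic form $\int_S|u|^2\,d\mathcal{H}^\beta$ by translation averaging without first arranging that the trace of $u_n$ on $S$ is itself small, which is exactly the content the paper obtains by the linear-system-plus-non-stationary-phase route you did not take. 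In short, the missing idea is to treat the vanishing of the (windowed) Fourier coefficients of $u_n|_S$ as a finite underdetermined linear system and let lattice-point abundance supply the solution.
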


\begin{remark}
    From our proofs, the convergence rates of \eqref{eq::lim-no-obs-local} and \eqref{eq::lim-no-obs-global} is faster than any negative power of $\lambda_n$.
    Consequencely, no observation estimates can hold even with loss of derivatives.
\end{remark}

\begin{proof}[Proof of Proposition~\ref{local}] 
    Since $S$ is compact, there exists $\epsilon_0 > 0$ such that for any $x_0 \in S$, the submanifold $S$ is parametrized as $S_{x_0} = \{(x',\varphi(x')) : x' \in \epsilon_0 B\}$ near $x_0$, where $B \subset \R^\beta$ is the unit ball centered at the origin, and $\varphi: \R^\beta \to \R^{d-\beta}$ a smooth and $x_0$-dependent function.
    
    For $k \in \mathcal{S}^{d-1}_\lambda$, the trace of the eigenfunction $e^{2\pi i k\cdot x}$ on $S_{x_0}$ is $e^{2\pi i k \cdot (x',\varphi(x'))}$.
    Next, let $\chi \in C_c^\infty(\R^\beta)$ be such that it equals to one in the unit ball $B$ and is supported in $2B$.
    When $0 < \epsilon < \frac{1}{2} \epsilon_0$, for $a_k \in \mathbb{C}$ where $k \in \mathbb{S}^{d-1}_\lambda$, one defines $g_\lambda \in C_c^\infty(\R^\beta)$ as
    \begin{equation}
        g_\lambda(x')
        = \chi \Bigl(\frac {x'} {\epsilon}\Bigr) \sum_{k \in \mathcal{S}^{d-1}_{\lambda}}  a_k e^{2\pi i k \cdot (x',\varphi(x'))}.
    \end{equation}
    Since $\supp g_\lambda \subset 2\epsilon B$, we may naturally regard it as a $4\epsilon \Z^\beta$-periodic function, so that it lives henceforth in $\T^\beta_{4\epsilon} = \R^\beta / 4\epsilon \Z^\beta$.
    Expanding it in Fourier series, one writes
    \begin{equation}
        g_\lambda(x') 
        = (4\epsilon)^{-\beta} \sum_{\ell \in \Z^\beta} e^{2\pi i \frac{\ell}{4\epsilon} \cdot x'} \widehat{g}_\lambda(\ell),
        \quad
        \widehat{g}_\lambda(\ell) = \sum_{k \in \mathcal{S}^{d-1}_\lambda} a_k A^\epsilon_\lambda(k,\ell),
    \end{equation}
    where the linear coefficients are
    \begin{equation}
         A^\epsilon_\lambda(k,\ell)
         = \int_{\T^\beta_{4\epsilon}} \chi \Bigl(\frac {x'} {\epsilon}\Bigr) e^{-2\pi i \frac{\ell}{4\epsilon} \cdot x'} e^{2\pi i k \cdot (x',\varphi(x'))} \dd x'.
    \end{equation}
    
    We will fix parameters $0 < \epsilon < \eta$ depending solely on $d$ and $\beta$.
    Then we will find an increasing sequence of $\lambda_n$ and a sequence of nontrivial constants $(a^{(n)}_k:k \in \mathcal{S}^{d-1}_{\lambda_n})$ such that the Fourier coefficients $\widehat{g}_{\lambda_n}(\ell)$ vanish when $|\ell| \le \eta \lambda_n$.
    This leads to solving the linear systems:
    \begin{equation}\label{null}
        \widehat{g}_{\lambda_n}(\ell) = \sum_{k \in \mathcal{S}^{d-1}_{\lambda_n}} a_k^{(n)} A^\epsilon_{\lambda_n}(k,\ell) = 0, \quad \ell \in \Z^\beta, \ |\ell| \le \eta \lambda_n.
    \end{equation}
    The number of equations is $\lesssim_\beta \eta^\beta \lambda^\beta$ and the number unknowns (which are $(a^{(n)}_k:k \in \mathcal{S}^{d-1}_{\lambda_n})$) is $N_d(\lambda_n)$.
    By Lemma~\ref{lem::lattice-number-lower-bound}, we may choose $\lambda_n$ such that $N_d(\lambda_n) \gtrsim_d \lambda_n^{d-2}$.
    Since $\beta \le d-2$, if $\eta$ is small and $n$ is large, the system \eqref{null} admits a nontrivial solution $(a_k^{(n)}:k \in \mathcal{S}^{d-1}_{\lambda_n})$, which we may further assume to be normalized in $\ell^2(\mathcal{S}^{d-1}_{\lambda_n})$.
    
    Fix these solutions.
    By Lemma~\ref{nonstat} below, since $\epsilon < \eta$, if $|\ell| > \eta \lambda_n$, then for all $N \ge 0$,
    \begin{equation*}
        |\widehat{A}^\epsilon_{\lambda_n}(k,\ell)|
        \lesssim_{d,\beta,\varphi,\epsilon,\eta,N} \ell^{-3N}.
    \end{equation*}
    For $N \geq d$, by the Cauchy--Schwartz inequality, this leads to
    \begin{equation*}
        |\widehat{g}_{\lambda_n}(\ell)| \le \Bigl( \sum_{k \in \mathcal{S}^{d-1}_{\lambda_n}} |A_{\lambda_n}^\epsilon(k,\ell)|^2\Bigr)^{1/2}
        \lesssim_{d,\beta,\varphi,\epsilon,\eta,N} |\ell|^{-3N} N_d(\lambda_n)
        \lesssim_{d,\eta} |\ell|^{-2N}.
    \end{equation*}
    Recalling that $\widehat{g}_{\lambda_n} (\ell) =0$ when $|\ell| \leq \eta \lambda_n $,
    by Parseval's identity we deduce
    \begin{equation*}
        \| g_{\lambda_n} \|_{L^2 (\T^\beta_{4\epsilon})}^2
        \sim_{\beta,\epsilon}
        \sum_{|\ell| \geq \eta \lambda_n } |\widehat{g}_{\lambda_n} (\ell)|^2
        \lesssim_{d,\beta,\varphi,\epsilon,\eta,N} \lambda_n^{- 4N + \beta}
        \le \lambda_n^{-N}.
    \end{equation*}
    
    Finally, letting $u_n(x) = \sum_{k \in \mathcal{S}^{d-1}_{\lambda_n}} a_k^{(n)} e^{2\pi i k\cdot x} \in E_{\lambda_n}$, then
    \begin{equation*}
        \| u_n \|_{L^2(\mu_{x_0,\epsilon})}^2
        \lesssim_{\beta,\epsilon} \| g_{\lambda_n} \|_{L^2 (\T^\beta_{4\epsilon})}^2 \lesssim_{d,\beta,\varphi,\epsilon,\eta,N} \lambda_n^{-N}.
        \qedhere
    \end{equation*}
\end{proof}

\begin{lemma}\label{nonstat}
   If $0 < \epsilon < \eta$ and $|\ell| > \eta\lambda$, then for all $N \in \NN$, there holds
   \begin{equation}
       |A_\lambda^\epsilon(k,\ell)| \lesssim_{d,\beta,\varphi,\epsilon,\eta,N} |\ell|^{-N}.
   \end{equation}
\end{lemma}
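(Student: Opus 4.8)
The plan is to prove the claimed decay by a non-stationary phase argument applied to the oscillatory integral
\begin{equation*}
    A_\lambda^\epsilon(k,\ell)
    = \int_{\T^\beta_{4\epsilon}} \chi\Bigl(\frac{x'}{\epsilon}\Bigr) e^{2\pi i \Phi(x')} \dd x',
    \qquad
    \Phi(x') = k \cdot (x',\varphi(x')) - \tfrac{\ell}{4\epsilon} \cdot x'.
\end{equation*}
Write $k = (k'', k''')$ according to the splitting $\R^d = \R^\beta \times \R^{d-\beta}$, so that $k \cdot (x',\varphi(x')) = k'' \cdot x' + k''' \cdot \varphi(x')$. Then the gradient of the phase is
\begin{equation*}
    \nabla_{x'} \Phi(x') = k'' + (D\varphi(x'))^{T} k''' - \tfrac{\ell}{4\epsilon}.
\end{equation*}
The first step is the key lower bound: since $|k| \le \lambda$, we have $|k''| \le \lambda$ and $|k'''| \le \lambda$, and since $\varphi$ is smooth on the compact closure of $2\epsilon B$ we have $\|D\varphi\|_{L^\infty(2\epsilon B)} \lesssim_{\beta,\varphi} 1$; hence $|k'' + (D\varphi(x'))^T k'''| \lesssim_{\beta,\varphi} \lambda$. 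On the other hand $|\ell/(4\epsilon)| = |\ell|/(4\epsilon) \ge \eta\lambda/(4\epsilon)$. Choosing $\epsilon < \eta$ small enough relative to the constant coming from $\|D\varphi\|_{L^\infty}$ — more precisely so that $\eta/(4\epsilon)$ exceeds twice that constant, which is possible since we are free to shrink $\epsilon$ after $\eta$ is fixed (this is exactly the ordering "$0<\epsilon<\eta$ depending solely on $d$ and $\beta$" announced before the lemma) — we obtain
\begin{equation*}
    |\nabla_{x'}\Phi(x')| \gtrsim_{d,\beta,\varphi,\epsilon,\eta} |\ell|
    \qquad \text{for all } x' \in \supp \chi(\cdot/\epsilon) \subset 2\epsilon B.
\end{equation*}

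The second step is the standard integration-by-parts (non-stationary phase) iteration. Introduce the first-order differential operator
\begin{equation*}
    L = \frac{1}{2\pi i |\nabla\Phi|^2} \sum_{j=1}^{\beta} \partial_j \Phi \, \partial_{x'_j},
\end{equation*}
which satisfies $L\bigl(e^{2\pi i \Phi}\bigr) = e^{2\pi i \Phi}$, and integrate by parts $N$ times:
\begin{equation*}
    A_\lambda^\epsilon(k,\ell) = \int_{\T^\beta_{4\epsilon}} e^{2\pi i \Phi(x')} \, (L^{T})^{N}\Bigl[\chi\Bigl(\tfrac{x'}{\epsilon}\Bigr)\Bigr] \dd x',
\end{equation*}
with no boundary terms since $\chi(\cdot/\epsilon)$ is compactly supported inside the fundamental domain. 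Each application of $L^T$ produces a factor gaining one power of $|\nabla\Phi|^{-1} \lesssim |\ell|^{-1}$, at the cost of derivatives falling on $\chi(\cdot/\epsilon)$, on $1/|\nabla\Phi|^2$, and on $\nabla\Phi$; one checks by induction that all these factors are bounded by constants depending on $d,\beta,\varphi,\epsilon,\eta,N$ — here one uses that derivatives of $\nabla\Phi$ are derivatives of $D\varphi$ contracted with $k'''$, hence $O(\lambda)$, while $|\nabla\Phi|^{-1} = O(|\ell|^{-1}) = O((\eta\lambda)^{-1})$, so every such pairing is $O_\eta(1)$, and derivatives of $\chi(x'/\epsilon)$ cost powers of $\epsilon^{-1}$. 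After $N$ steps and estimating the integral over the fixed-volume domain $\T^\beta_{4\epsilon}$ trivially, we arrive at $|A_\lambda^\epsilon(k,\ell)| \lesssim_{d,\beta,\varphi,\epsilon,\eta,N} |\ell|^{-N}$, as desired.

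The main obstacle — and the only genuinely delicate point — is the bookkeeping in the lower bound for $|\nabla\Phi|$: one must be careful that the large parameter $|k'''| \le \lambda$ multiplying $D\varphi$ does not spoil the gain, which is why the quantitative smallness of $\epsilon$ relative to $\eta$ (and to $\|D\varphi\|_{C^1}$, uniformly in $x_0 \in S$ thanks to compactness) is essential; once this is secured, the rest is the routine non-stationary phase mechanism. It is worth noting that the exponent $-N$ suffices here, although the proof manifestly yields decay $\lesssim_N (\lambda + |\ell|)^{-N} \lesssim |\ell|^{-N}$, which is what is invoked (with $|\ell|^{-3N}$) in the proof of Proposition~\ref{local}.
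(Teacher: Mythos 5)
Your proof is correct and follows the same route as the paper's: a non-stationary phase argument hinging on the lower bound for $|\nabla_{x'}\psi^\epsilon_{k,\ell}|$ coming from the dominance of the $\ell/(4\epsilon)$ term. You are in fact somewhat more careful than the paper's one-line estimate \eqref{non-degeneracy-cond}, which drops the $(D\varphi(x'))^{T}k''$ contribution without comment and contains a small typo ($\ell/\epsilon$ for $\ell/(4\epsilon)$). Your explicit treatment of that term, via $\|D\varphi\|_{L^\infty(2\epsilon B)}\lesssim_{\beta,\varphi}1$ and the smallness of $\epsilon$ relative to $\eta$, is the right way to fill that gap. One small imprecision on your side: making $\eta/(4\epsilon)$ dominate the $D\varphi$ constant forces $\epsilon$ to depend on $\varphi$ (equivalently, on a uniform bound over $x_0\in S$, available by compactness), not ``solely on $d$ and $\beta$'' as you echo from the text preceding the lemma; this is harmless since the lemma's constant is permitted to depend on $\varphi$, but it is worth stating accurately. (Alternatively, one can parametrize $S$ near $x_0$ as a graph over its tangent plane so that $D\varphi(0)=0$, making $\|D\varphi\|_{L^\infty(2\epsilon B)}=O(\epsilon)$, which further tames this dependence.) The integration-by-parts bookkeeping you sketch is standard and correct.
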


\begin{proof}
    Write $k = (k',k'') \in \Z^\beta \times \Z^{d-\beta}$ and let $\psi^\epsilon_{k,\ell}(x') = (k' -\frac{\ell}{4\epsilon}) \cdot x' +  k'' \cdot \varphi(x')$.
    Then
    \begin{equation*}
        A_\lambda^\epsilon(k,\ell) = \int_{\T^\beta_{4\epsilon}} \chi\Bigl(\frac{x'}{\epsilon}\Bigr) e^{2\pi i \psi^\epsilon_{k,\ell}(x')} \dd x'.
    \end{equation*}
    Since $|k'| \le |k| = \lambda < \frac{|\ell|}{\eta} $, to conclude, we integrate by part and use the estimate:
    \begin{equation} \label{non-degeneracy-cond}
        |\nabla_{x'} \psi^\epsilon_{k,\ell} (x')|  = \Bigl|\Bigl(k' - \frac \ell \epsilon, k'' \cdot \nabla_{x'} \varphi\Bigr) \Bigr|
        \geq \frac{|\ell|} \epsilon - |k'|
        \geq |\ell| \Bigl(\frac{1}{\epsilon} - \frac{1}{\eta}\Bigr).
        \qedhere
    \end{equation} 
\end{proof}

\begin{proof}[Proof of Proposition \ref{global}]
    By the compactness of $\mathcal{S}$, there exists $\epsilon_0 > 0$ and a finite open covering $S = \bigcup_{j=1}^K S_j$ where, for each $j$, there exists a smooth function $\varphi_j : \R^\beta \to \R^{d-\beta}$ such that $S_j = \{(x',\varphi_j(x') : x' \in \epsilon_0 B) \}$ under a certain system of coordinates.
    
    We proceed as in the proof of Proposition~\ref{local} and let
    \begin{equation}
        g^j_\lambda(x')
        = \chi \Bigl(\frac {x'} {\epsilon}\Bigr) \sum_{k \in \mathcal{S}^{d-1}_{\lambda}}  a_k e^{2\pi i k \cdot (x',\varphi_j(x'))}
        = (4\epsilon)^{-\beta} \sum_{\ell \in \Z^\beta} e^{2\pi i \frac{\ell}{4\epsilon} \cdot x'} \widehat{g}^j_\lambda(\ell).
    \end{equation}
    Fix an increasing sequence $(\lambda_n)_{n \ge 0}$ such that
    \begin{itemize}
        \item If $d=3,4$, then $N_d(\lambda_n) \gtrsim \lambda_n \ln \ln \lambda_n$;
        \smallskip
        \item If $d\ge 5$, then $N_d(\lambda_n) \gtrsim_d \lambda_n^{d-2}$.
    \end{itemize}
    Setting $\eta = 1$, we solve the following linear systems:
    \begin{equation}
        \widehat{g}^j_{\lambda_n}(\ell) = 0, \quad \ell \in \Z^\beta,\ |\ell| \le \lambda_n, \ 1 \le j \le K.
    \end{equation}
    Now the number of equation is $\lesssim_\beta K \lambda_n^\beta$.
    It is smaller than $N_d(\lambda_n)$, the number of the unknowns (which are $(a_k)$), under the assumption of the proposition (either $d=3,4$ and $\beta \le d-2$ or $d \ge 5$ and $\beta \le d-3$).
    Hence, for large $n$, we find an $\ell^2$ normalized solution $(a^n_k)_{k \in \mathcal{S}^{d-1}_{\lambda_n}}$ to this system.
    
    To conclude, one defines the eigenfunction $u_n \in E_{\lambda_n}$ as previously, apply the previously obtained estimates of $u_n$ on each coordinate patch, and combines these estimate:
    \begin{equation*}
    \|u_n\|_{L^2(\dd\mu)}^2
    \le \sum_{1 \le j \le K} \| \bm{1}_{S_j} u_n \|_{L^2(\dd\mu)}^2
    \lesssim_{\beta,\epsilon} \sum_{1 \le j \le K} \| g^j_{\lambda_n} \|_{L^2 (\T^\beta_{4\epsilon})}^2 
        \lesssim_{d,\beta,\varphi,\epsilon,\eta,N} K \lambda_n^{-N}.
        \qedhere
    \end{equation*}
\end{proof}

\begin{remark} 
    Propositions~\ref{local} and~\ref{global} (the latter under the requirement $\beta \leq d-3$) can be extended to a general compact Riemannian manifold $M$ of dimension $d$.
    In this generalization, eigenfunctions are replaced by quasimodes of $\Delta_M$, the Laplacian on $M$. 
    Specifically, for some $\delta > 0$, one considers functions in the space
    \begin{equation*}
        E_{\lambda,\delta} = \bigoplus_{\mu \in (\lambda^2 - \delta, \lambda^2 + \delta)} \ker(-\Delta_M - \mu).
    \end{equation*}
    Indeed, at least along a sequence $\lambda_n \to \infty$, the lower bound
    \begin{equation*}
        \dim E_{\lambda,\delta} \gtrsim_{M,\delta} \lambda^{d-2}
    \end{equation*}
    follows directly from Weyl's law:
    \begin{equation*}
        \#\{ \lambda \in \sigma(-\Delta) : \lambda \leq \Lambda \} \sim_M \Lambda^d.
    \end{equation*}
    In the preceding proofs, the only torus-specific ingredient is the explicit form of the eigenfunctions on $\T^d$, which makes it possible to precisely identify the oscillatory behavior of the traces $g_\lambda$. This structure enables the integration by parts argument in Lemma~\ref{nonstat}.
    However, on a general compact Riemannian manifold, this exact eigenfunction description can be replaced by suitable approximate representations of quasimodes (see e.g.~\cite[Chapter5]{Sogge} and~\cite[Lemma 2.3]{BurqGerardTzvetkov-2}). 
\end{remark}

We now turn to the case of general measures. The reader can refer to Section~\ref{sec:prel} for a definition of the upper Minkowski dimension.

\begin{proposition} [Necessary conditions for observability for general measures] \label{prop:no-observability-d-2} 
    If $d \geq 3$ and $\overline{\dim}_{\mathrm{M}} (\supp \mu) < d-2$, then the observability inequality does not hold true.
    Equivalently, there exists a sequence of $L^2$-normalized eigenfunction $\{u_n \}_{n}$ where $u_n \in E_{\lambda_n}$ with $\lambda_n \to \infty$, such that
    \begin{equation*}
        \lim_{n \to \infty} \int_{\T^d} |u_n|^2 \dd \mu = 0.
    \end{equation*}
\end{proposition}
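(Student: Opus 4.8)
The plan is to adapt, to a general low-dimensional support, the idea behind Propositions~\ref{local} and~\ref{global}: along a sequence $\lambda_n \to \infty$, I would build an $L^2$-normalised eigenfunction $u_n \in E_{\lambda_n}$ that vanishes to high order at the centres of a fine cover of $\supp\mu$, and then show that any such function is automatically uniformly tiny on $\supp\mu$. In those propositions the vanishing was engineered through a Fourier expansion on a subtorus fitted to a submanifold; for a general set I would instead simply impose many pointwise conditions $\partial^\alpha u_n(y_j)=0$. The one genuinely delicate point is the balance between the covering scale $\delta_n$, which must be small enough for high-order vanishing to force $u_n$ to be negligible on $\supp\mu$, and the total number of vanishing conditions, which must stay below $N_d(\lambda_n) \sim \lambda_n^{d-2}$ so that such a nonzero $u_n$ exists; the hypothesis $\overline{\dim}_{\mathrm{M}}(\supp\mu) < d-2$ is exactly what makes the two requirements compatible.

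Concretely, put $s_0 = \overline{\dim}_{\mathrm{M}}(\supp\mu)$ and fix $\epsilon_0>0$ with $s_0+\epsilon_0 < d-2$. By Lemma~\ref{lem::lattice-number-lower-bound} choose $\lambda_n \in [n-1,n+1]\cap\sqrt{\NN}$ with $N_d(\lambda_n)\gtrsim_d\lambda_n^{d-2}$, and set $\delta_n = \lambda_n^{-1}$; by the definition of the upper Minkowski dimension, for $n$ large one can cover $\supp\mu$ by $M_n < \delta_n^{-(s_0+\epsilon_0)} = \lambda_n^{s_0+\epsilon_0}$ geodesic balls $B_{\delta_n}(y_1),\dots,B_{\delta_n}(y_{M_n})$. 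Let $m_n = \lceil\log_2 N_d(\lambda_n)\rceil + \lceil 35 d\rceil$, so that $4^{m_n}\ge N_d(\lambda_n)^2$, $m_n\ge 35d$, and $m_n\lesssim_d\log\lambda_n$ (using $N_d(\lambda_n)\le(2\lambda_n+1)^d$). For $u = \sum_{k\in\mathcal{S}^{d-1}_{\lambda_n}} a_k e^{2\pi i k\cdot x}\in E_{\lambda_n}$, the requirements $\partial^\alpha u(y_j)=0$ for $1\le j\le M_n$ and $|\alpha|\le m_n$ form a homogeneous linear system over $\mathbb{C}$ in the $N_d(\lambda_n)$ unknowns $(a_k)$ with at most $M_n\binom{m_n+d}{d}\lesssim_{d,\epsilon_0}\lambda_n^{s_0+\epsilon_0}(\log\lambda_n)^d$ equations. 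Since $s_0+\epsilon_0 < d-2$, for $n$ large this is smaller than $N_d(\lambda_n)$, so a nontrivial solution exists; normalising it in $\ell^2(\mathcal{S}^{d-1}_{\lambda_n})$ produces an $L^2$-normalised $u_n\in E_{\lambda_n}$.

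Finally I would estimate $u_n$ on $\supp\mu$. For every multi-index $\alpha$, Cauchy--Schwarz gives the Bernstein-type bound $\|\partial^\alpha u_n\|_{L^\infty}\le(2\pi\lambda_n)^{|\alpha|}\sum_k|a_k|\le(2\pi\lambda_n)^{|\alpha|}\sqrt{N_d(\lambda_n)}$. Fix $j$ and $x\in B_{\delta_n}(y_j)$; the degree-$m_n$ Taylor polynomial of $u_n$ at $y_j$ vanishes by construction, so Taylor's formula with remainder together with $\sum_{|\alpha|=m_n+1}\frac{1}{\alpha!}=\frac{d^{\,m_n+1}}{(m_n+1)!}$ yields
\begin{equation*}
    |u_n(x)| \le \frac{d^{\,m_n+1}}{(m_n+1)!}\,\delta_n^{\,m_n+1}(2\pi\lambda_n)^{m_n+1}\sqrt{N_d(\lambda_n)} = \frac{(2\pi d)^{m_n+1}}{(m_n+1)!}\sqrt{N_d(\lambda_n)} \le 2^{-(m_n+1)}\sqrt{N_d(\lambda_n)},
\end{equation*}
where the last inequality uses $(m_n+1)!\ge\bigl((m_n+1)/e\bigr)^{m_n+1}\ge(4\pi d)^{m_n+1}$, valid since $m_n+1\ge 35d > 4\pi e\,d$. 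Because the balls cover $\supp\mu$, this bound holds $\mu$-almost everywhere, so
\begin{equation*}
    \int_{\T^d}|u_n|^2\dd\mu \le 4^{-(m_n+1)}N_d(\lambda_n) \le \frac{1}{4N_d(\lambda_n)},
\end{equation*}
which tends to $0$ as $n\to\infty$ since $N_d(\lambda_n)\gtrsim_d\lambda_n^{d-2}\to\infty$; this is the assertion. (Letting $m_n$ grow slightly faster than $\log\lambda_n$ makes the left-hand side decay faster than any negative power of $\lambda_n$, which is the strengthening recorded in the remark above.) I expect the parameter balance sketched in the first paragraph to be the only real obstacle, the remaining steps being routine.
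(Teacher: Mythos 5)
Your proof is correct, and it takes a genuinely different route from the paper's. The paper constructs the vanishing through the localized Fourier expansion of Lemma~\ref{preplemma}: it mollifies $\mu$ by a kernel $\Phi$ concentrated at scale $\lambda_n^{-1+\epsilon}$ (so that $\int|u|^2\dd\mu = \int|u|^2\dd(\Phi*\mu)$ for $u \in E_\lambda$), covers the enlarged support by $K \lesssim \lambda_n^{\beta+d\epsilon}$ balls of radius $\lambda_n^{-1}/6$, and cancels $\lesssim \lambda_n^{d\epsilon}$ local Fourier coefficients per ball; the error term comes from the tail of the localized Fourier series. You instead cover $\supp\mu$ directly at scale $\lambda_n^{-1}$, impose vanishing of all derivatives up to a \emph{logarithmic} order $m_n \sim \log\lambda_n$ at the ball centers, and control the remainder by Taylor's theorem and the Bernstein bound $\|\partial^\alpha u_n\|_{L^\infty} \le (2\pi\lambda_n)^{|\alpha|}\sqrt{N_d(\lambda_n)}$. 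This avoids the mollification and the approximation lemma entirely; the price is the extra combinatorial factor $\binom{m_n+d}{d} \sim (\log\lambda_n)^d$ in the equation count, which is harmless since $s_0+\epsilon_0 < d-2$ leaves a polynomial gap. Your parameter bookkeeping checks out: $\sum_{|\alpha|=m_n+1}\frac{1}{\alpha!} = \frac{d^{m_n+1}}{(m_n+1)!}$ is the multinomial identity, the Stirling bound $(m_n+1)! \ge ((m_n+1)/e)^{m_n+1} \ge (4\pi d)^{m_n+1}$ needs $m_n+1 \ge 4\pi e d \approx 34.2\,d$ which $35d$ covers, and $4^{m_n} \ge N_d(\lambda_n)^2$ gives $\int|u_n|^2\dd\mu \le (4N_d(\lambda_n))^{-1} \to 0$. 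Your approach is both more elementary and gives a cleaner quantitative decay, though the paper's Fourier-localization method is what also powers Propositions~\ref{local} and~\ref{global} on submanifolds, so the authors may have preferred a unified mechanism.
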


We start with a preparatory lemma.

\begin{lemma}
\label{preplemma}
    For all $\lambda \ge 1$, all $\ell \in \Z^d$ and all $x_0 \in \T^d$, there exists a linear functional $\alpha_{\lambda,\ell,x_0} : C^\infty(\T^d) \to \mathbb{C}$ such that, if $u \in C^\infty (\T^d)$ is $L^2$-normalized and satisfies $\widehat{u}_k = 0$ for $|k| > \lambda$, then for all $R >0$ and all $N \in \NN$, there holds
    \begin{equation}
        \sup_{4 \lambda |x-x_0| \le 1} \biggl| u(x) - \sum_{|\ell| \leq R} \alpha_{\lambda,\ell}(u) e^{2\pi i \ell \cdot \lambda x} \biggr| \lesssim_{d,N} \lambda^{d/2}R^{-N}.
    \end{equation}
\end{lemma}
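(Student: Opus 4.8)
The plan is to localize the spectrally supported function $u$ near $x_0$ by rescaling, and then expand the resulting finite exponential sum in the rescaled variable by means of a fixed smooth cutoff. Concretely, fix once and for all a cutoff $\chi \in C_c^\infty(\R^d)$ with $\chi \equiv 1$ on $\{|z| \le 1/4\}$ and $\supp \chi \subset \{|z| \le 3/8\} \subset (-1/2,1/2)^d$. Periodizing $\chi(z)e^{2\pi i \xi \cdot z}$ over $\Z^d$ produces a smooth function on the unit torus (smooth precisely because $\chi$ is flat near the boundary of the period cell), and its absolutely convergent Fourier series has coefficients $\widehat{\chi}(\ell - \xi)$ for $\ell \in \Z^d$, where $\widehat{\chi}$ is the Fourier transform on $\R^d$. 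Since $\chi \equiv 1$ on $\{|z|\le 1/4\}$, this gives the exact pointwise identity
\[
    e^{2\pi i \xi \cdot z} = \sum_{\ell \in \Z^d} \widehat{\chi}(\ell - \xi) \, e^{2\pi i \ell \cdot z}, \qquad |z| \le 1/4, \quad |\xi| \le 1.
\]

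Now suppose $|x - x_0| \le 1/(4\lambda)$, set $z = \lambda(x-x_0)$ so $|z| \le 1/4$, and write each frequency $k$ in $u(x) = \sum_{|k|\le\lambda} \widehat{u}_k e^{2\pi i k \cdot x}$ as $k \cdot x = k \cdot x_0 + (k/\lambda)\cdot z$, noting $|k/\lambda| \le 1$. Applying the identity above with $\xi = k/\lambda$, using $e^{2\pi i \ell \cdot z} = e^{2\pi i \lambda \ell \cdot x} e^{-2\pi i \lambda \ell \cdot x_0}$, and interchanging the finite $k$-sum with the $\ell$-sum, I obtain
\[
    u(x) = \sum_{\ell \in \Z^d} \alpha_{\lambda,\ell,x_0}(u)\, e^{2\pi i \lambda \ell \cdot x}, \qquad \alpha_{\lambda,\ell,x_0}(u) := e^{-2\pi i \lambda \ell \cdot x_0} \sum_{|k| \le \lambda} \widehat{u}_k\, e^{2\pi i k \cdot x_0}\, \widehat{\chi}(\ell - k/\lambda),
\]
which is manifestly a linear functional of $u$, as required. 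Since $\widehat{\chi} \in \mathcal{S}(\R^d)$, for every $M$ one has $|\widehat{\chi}(\ell - k/\lambda)| \lesssim_M (1+|\ell|)^{-M}$ uniformly over $|k/\lambda| \le 1$ (because $1 + |\ell - \xi| \ge \tfrac12(1+|\ell|)$ when $|\xi|\le 1$). Combined with Cauchy--Schwarz, the bound $\#\{k \in \Z^d : |k| \le \lambda\} \lesssim_d \lambda^d$, and $\|u\|_{L^2} = 1$, this yields $|\alpha_{\lambda,\ell,x_0}(u)| \lesssim_{d,M} \lambda^{d/2} (1+|\ell|)^{-M}$.

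To finish, for $R \ge 1$ and any $N$ apply the coefficient estimate with $M = N+d+1$:
\[
    \sup_{4\lambda|x-x_0|\le 1} \Bigl| u(x) - \sum_{|\ell| \le R} \alpha_{\lambda,\ell,x_0}(u)\, e^{2\pi i \lambda \ell \cdot x} \Bigr| \le \sum_{|\ell| > R} |\alpha_{\lambda,\ell,x_0}(u)| \lesssim_{d,N} \lambda^{d/2} \sum_{|\ell|>R}(1+|\ell|)^{-N-d-1} \lesssim_{d,N} \lambda^{d/2} R^{-N},
\]
and the remaining range $0 < R < 1$ follows from the crude bound $\sum_{\ell \in \Z^d} |\alpha_{\lambda,\ell,x_0}(u)| \lesssim_{d,N} \lambda^{d/2} \le \lambda^{d/2} R^{-N}$. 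I do not anticipate a genuine obstacle: the only points demanding care are the verification that the Fourier expansion of $\chi(z) e^{2\pi i \xi \cdot z}$ is an \emph{exact} pointwise identity on $\{|z| \le 1/4\}$, the uniformity of the rapid decay of $\widehat{\chi}(\ell - \xi)$ over $|\xi| \le 1$, and the matching of the powers of $R$ and $N$ — all routine.
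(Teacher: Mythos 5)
Your proof is correct and essentially identical to the paper's: both rescale to a window of size $\sim\lambda^{-1}$ around $x_0$, multiply by a compactly supported cutoff equal to $1$ near the origin, periodize, read off the linear functionals as Fourier coefficients, and then bound the tail via the rapid decay of $\widehat{\chi}$, Cauchy--Schwarz, and the count $\#\{|k|\le\lambda\}\lesssim\lambda^d$. The only cosmetic differences are that the paper first translates to $x_0=0$ while you carry the phase factors $e^{-2\pi i\lambda\ell\cdot x_0}$ explicitly, and that you spell out the condition $\chi\equiv 1$ on $\{|z|\le 1/4\}$, which the paper's proof uses in the final pointwise comparison but omits to state when introducing $\chi$.
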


\begin{proof}
    Up to a translation, we may assume that $x_0 = 0$.
    Let $\chi \in C^\infty_c(\R^d)$ be supported in $\frac{1}{2} B = \{x \in \R^d : |x| \le \frac{1}{2}\}$.
    For $u \in C^\infty(\T^d)$, let
    \begin{equation*}
        g(y) = \chi (y) u \left(\frac{y}{\lambda} \right) = \chi(y) \sum_{k \in \Z^d} \widehat{u}_k e^{2 \pi i k \cdot y/\lambda}.
    \end{equation*}
    Since $\supp g \subset \frac{1}{2} B$, we may extend it periodically to $\T^d$ and still denote this extension by $g$.
    Let $(\widehat{g}_\ell)_{\ell \in \Z^d}$ be Fourier coefficients of $g$ and let $\widehat{\chi}$ be the Fourier transform of $\chi$ in $\R^d$.
    Then, define
    \begin{equation*}
        \alpha_{\lambda,\ell,x_0}(u)
        = \widehat{g}_\ell 
        = \sum_{k \in \Z^d} \widehat{u}_k \int_{\T^d} \chi(y) e^{2\pi i (k/\lambda - \ell)\cdot y} \dd y
        = \sum_{k \in \Z^d} \widehat{u}_k \widehat{\chi}\Bigl(\ell-\frac{k}{\lambda}\Bigr).
    \end{equation*}
    
    Now assume that $\widehat{u}_k = 0$ when $|k| > \lambda$.
    By the Cauchy--Schwartz inequality and the rapid decay of $\widehat{\chi}$, one deduces that, for all $N \ge 0$,
    \begin{equation*}
        |\widehat{g}_\ell|^2
        \lesssim_{\chi,N} \|u\|_{L^2}^2 \sum_{|k|\le \lambda} \Bigl(1 + \Bigl|\ell - \frac{k}{\lambda}\Bigr|^2\Bigr)^{-N}
        \le \lambda^d (1+|\ell|)^{-2N} \|u\|_{L^2}^2.
    \end{equation*}
    This implies that, for all $R \ge 1$, we have
    \begin{equation*}
        \sup_{4\lambda |x| \le 1} \biggl|u(x)-\sum_{|\ell| \le R} \widehat{g}_\ell e^{2\pi i \ell \cdot \lambda x}\biggr|
        \le \biggl\|g(y) - \sum_{|\ell| \le R} \widehat{g}_\ell e^{2\pi i \ell \cdot y}\biggr\|_{L^\infty}
        \le \sum_{|\ell| > R} |\widehat{g}_\ell|
        \lesssim_{\chi,d,N} \lambda^{d/2} R^{-N}.
        \qedhere
    \end{equation*}
\end{proof}

\begin{proof}[Proof of Proposition \ref{prop:no-observability-d-2}]
    Let $\phi$ be a Schwartz function in $\R^d$ such that $\widehat{\phi} \in C_c^\infty(\R^d)$ and satisfies $\widehat{\phi}(\xi)=1$ for $|\xi| \le 2$.
    For $\lambda \ge 1$, let $\Phi \in C^\infty(\T^d)$ be such that $\widehat{\Phi}_k = \widehat{\phi}(k/\lambda)$ for $k \in \Z^d$.
    Particularly $\widehat{\Phi}_k = 1$ when $|k| \le 2\lambda$.
    Consequently, if $u \in E_\lambda$, then
    \begin{equation*}
        \int_{\T^d} |u|^2 \dd\mu 
        = \int_{\T^d} \Phi * (|u|^2) \dd\mu
        = \int_{\T^d} |u|^2 \dd(\Phi * \mu).
    \end{equation*}
    By the Poisson summation formula and Fourier rescaling properties, one has
    \begin{equation*}
        \Phi(x) = \sum_{k \in \Z^d} \widehat{\phi}\Bigl( \frac{k}{\lambda} \Bigr) e^{2\pi i k\cdot x}
        = \lambda^d \sum_{q \in \Z^d} \phi(\lambda (x-q)).
    \end{equation*}
    From this, one immediately sees that $\Phi$ is concentrated near $\Z^d$ (i.e. $0 \in \T^d$).
    Quantitatively, letting $B_{\lambda^{-1+\epsilon} }(0) = \{x \in \T^d : \dist(x,0) \le \lambda^{-1+\epsilon}\}$, then the following estimates hold:
    \begin{itemize}
        \item $|\Phi(x)| \lesssim_{\phi,d} \lambda^d $ for $x \in B_{\lambda^{-1+\epsilon} }(0)$;
        \smallskip
        \item $|\Phi(x)| \lesssim_{\phi,d,N, \epsilon} \lambda^{-N} $ for $N \ge 0$ and $x \notin B_{\lambda^{-1+\epsilon} }(0)$.
    \end{itemize}
    Therefore, to conclude the proof it is enough to show that  there exist $\epsilon >0$ and a sequence of $L^2$-normalized eigenfunctions $u_n \in E_{\lambda_n}$ with $\lambda_n \to \infty$ such that
    \begin{equation*}
        \lim_{n \to \infty} \int_{\T^d} |u_n|^2 \dd \mu_n^\epsilon = 0,
        \quad \text{ where }
        \mu_n^\epsilon = \lambda_n^d \cdot \mathbf{1}_{B_{\lambda_n^{-1+\epsilon} }(0)} * \mu.
    \end{equation*}
     Let $\beta = \overline{\dim}_{\mathrm{M}} (\supp \mu)$ and choose $\epsilon > 0$ such that $\beta + 2d\epsilon < d-2$.
   Then, from the Minkowski dimension definition, we note that there exists $K \lesssim_{d,\mu} \lambda^{\beta + d\epsilon}$ and $(x_j)_{j=1}^K$ in $\T^d$ such that
    \begin{equation*}
        \supp \mu_n^\epsilon
        \subset B_{\lambda_n^{-1+\epsilon} }(0) + \supp \mu \subset \bigcup_{1 \le j \le K} B_{\lambda_n^{-1}/6}(x_j).
    \end{equation*}
    Let $\alpha_{\lambda,\ell,x_j}$ be defined as in Lemma~\ref{preplemma}.
    For any $n \in \NN$ we can find an $L^2$-normalized eigenfunction $u^{(n)} \in E_{\lambda_n}$ satisfying the following conditions:
    \begin{equation}
    \label{eq::alpha-vanishing-condition}
        \alpha_{\lambda_n,\ell,x_j}(u^{(n)}) = 0,
        \quad 1 \le j \le K, \ |\ell| \le \lambda^\epsilon.
    \end{equation}
    Interpreting~\eqref{eq::alpha-vanishing-condition} as a linear system in the unknowns 
\((\widehat{u}^{(n)}_k)_{k \in \mathcal{S}^{d-1}_{\lambda_n}}\), the number of unknowns is 
\(N_d(\lambda_n)\), which satisfies \(N_d(\lambda_n) \gtrsim \lambda_n^{d-2}\) for a suitably 
chosen sequence \((\lambda_n)\) (see Lemma~\ref{lem::lattice-number-lower-bound}). In contrast, 
the number of equations satisfies the bound \(\lesssim_d K \lambda_n^{d\epsilon} 
\lesssim_{d,\mu} \lambda_n^{\beta + 2d\epsilon}\).
    %Indeed, regarding \eqref{eq::alpha-vanishing-condition} as a linear system for the unknowns $(\widehat{u}^{(n)}_k)_{k\in \mathcal{S}^{d-1}_\lambda}$, the number of unknowns is $N_d(\lambda_n)$, which is $\gtrsim \lambda_n^{d-2}$ for a well-chosen sequence of $\lambda_n$ (Lemma~\ref{lem::lattice-number-lower-bound}), whereas the number of equations is $\lesssim_d K  \lambda_n^{d\epsilon} \lesssim_{d,\mu} \lambda_n^{\alpha + 2d\epsilon}$.
Since $\beta + 2d\epsilon < d-2$, for this sequence and sufficiently large $n$, a nontrivial solution always exists.
    
For such $u^{(n)}$, by Lemma \ref{preplemma}, for all $j \in \{1,\ldots,K\}$ and $N \ge 0$, there holds $|u^{(n)}(x)| \lesssim_{d,N} \lambda_n^{d/2-\epsilon N}$ for $x \in B_{\lambda_n^{-1}/6}(x_j)$.
Therefore, taking $N$ large, one obtains as $n \to \infty$:
\begin{align*}
    \int_{\T^d} |u_n|^2 \dd \mu_n^\epsilon 
    & \leq \sum_{1 \le j \le K}
    \|u\|_{L^\infty(B_{\lambda_n^{-1}/6}(x_j))}^2   \times \mu_n^\epsilon(B_{\lambda_n^{-1}/6}(x_j))  \\
    & 
    \lesssim_{d,N} \sum_{1 \le j \le K} \lambda_n^{d-2\epsilon N} \times  \lambda^{ d \epsilon}
    \lesssim_{d} \lambda_n^{(\beta +  2 d\epsilon) + (d-2\epsilon N)}
    \to 0.
    \qedhere
\end{align*}
\end{proof}

\section{Pointwise Fourier decay}
\label{sectionpointwiseFourier}

\begin{theorem} \label{thm:decay-fourier}
    Let $d\geq 3$. 
    The observability inequality and the trace inequality hold true for $\mu \in \mathcal{P}(\T^d)$ if, for some $\epsilon>0$, it satisfies the Fourier decay estimate:
    \begin{equation}
    \label{eq::condition-measure-fourier-decay}
        |\widehat{\mu}_k| \lesssim |k|^{-(d-2 + \epsilon)}.
    \end{equation}
\end{theorem}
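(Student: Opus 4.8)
My plan starts from the algebraic identity, valid for $u=\sum_{k\in\mathcal{S}^{d-1}_\lambda}\widehat{u}_k e^{2\pi ik\cdot x}\in E_\lambda$ and using $\widehat{\mu}_0=1$:
\begin{equation*}
    \int_{\T^d}|u|^2\dd\mu
    =\sum_{k,\ell\in\mathcal{S}^{d-1}_\lambda}\widehat{\mu}_{k-\ell}\widehat{u}_k\overline{\widehat{u}_\ell}
    =\|u\|_{L^2}^2+\mathcal{E}(u),
    \qquad
    \mathcal{E}(u):=\sum_{\substack{k,\ell\in\mathcal{S}^{d-1}_\lambda\\k\neq\ell}}\widehat{\mu}_{k-\ell}\widehat{u}_k\overline{\widehat{u}_\ell}.
\end{equation*}
Both inequalities reduce to the single estimate $\limsup_{\lambda\to\infty}\sup_{0\neq u\in E_\lambda}|\mathcal{E}(u)|/\|u\|_{L^2}^2\le s$, where $s:=\sup_{k\neq0}|\widehat{\mu}_k|$, since on any fixed finite-dimensional $E_\lambda$ the ratio $|\mathcal{E}(u)|/\|u\|_{L^2}^2$ is bounded; this must be supplemented by a separate treatment of small eigenvalues for the observability part (see below). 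Note that \eqref{eq::condition-measure-fourier-decay} forces $\widehat{\mu}_k\to0$ (as $d-2+\epsilon>0$), so by Lemma~\ref{lemma:delta-measure} one has $s<1$.

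To control $\mathcal{E}(u)$ I would run a dimension descent based on Connes' clustering lemma (Lemma~\ref{lemma:Connes}) and the cap bound $N_d(\lambda,r)\lesssim_{\epsilon'}\lambda^{\epsilon'}r^{d-2}$ (Lemma~\ref{lemma:spherical-cap}, for $d\ge3$). Applying Lemma~\ref{lemma:Connes} to $\mathcal{S}^{d-1}_\lambda$, I split $\mathcal{E}(u)$ into a cross part (pairs $k,\ell$ lying in distinct clusters) and within-cluster parts. On the cross part $|k-\ell|\gtrsim\lambda^{2/(d+1)!}$, so Schur's test, combined with the dyadic bound $\sum_{|v|\ge R}|\widehat{\mu}_v|\lesssim_{\epsilon'}\lambda^{\epsilon'}R^{-\epsilon}$ (obtained by summing the cap counts against $|\widehat{\mu}_v|\lesssim|v|^{-(d-2+\epsilon)}$), gives, after choosing $\epsilon'$ small, a bound $\lesssim\lambda^{-\delta}\|u\|_{L^2}^2$ for some $\delta>0$. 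Each within-cluster piece is the same quadratic form restricted to a subset of an affine slice $\mathcal{S}^{d-1}_\lambda\cap V$, which is again a sphere, now of dimension $\le d-2$; I iterate the splitting, at most $d-1$ times, until the clusters lie on lines and hence contain at most two lattice points. A two-point cluster $\{p,q\}$ contributes $2\Re(\widehat{\mu}_{p-q}\widehat{u}_p\overline{\widehat{u}_q})$, which is $\le|\widehat{\mu}_{p-q}|(|\widehat{u}_p|^2+|\widehat{u}_q|^2)\le s(|\widehat{u}_p|^2+|\widehat{u}_q|^2)$; summed over the disjoint bottom clusters this is $\le s\|u\|_{L^2}^2$, and adding the at most $d-1$ cross terms gives $|\mathcal{E}(u)|\le(s+o_\lambda(1))\|u\|_{L^2}^2$, as needed.

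The main obstacle is making this descent uniform across its $d-1$ levels. Using the cap estimates of Lemma~\ref{lemma:spherical-cap} on the affine slices is harmless, because each slice is a subset of $\mathcal{S}^{d-1}_\lambda$. The genuine difficulty is that Connes' separation $\rho^{2/(m+1)!}$ on a slice of radius $\rho$ degrades when $\rho\ll\lambda$, so the cross terms produced at lower levels need not be negative powers of $\lambda$. Slices of radius $O_d(1)$ carry only $O_d(1)$ lattice points and can be controlled directly; in general one should run the descent with clustering radii taken to be suitable fixed powers of $\lambda$ — decreasing along the descent and small enough for Connes' flatness to apply on each slice — and then track the resulting exponents so that every cross term remains $\lesssim\lambda^{-\delta}$. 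This exponent bookkeeping is the technical heart of the argument.

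Granting the estimate on $\mathcal{E}(u)$, the trace inequality follows from $\int|u|^2\dd\mu\le(1+\sup_\lambda\sup_u|\mathcal{E}(u)|/\|u\|_{L^2}^2)\|u\|_{L^2}^2$, which gives $\mathfrak{C}_\mu<\infty$. For observability, $\int|u|^2\dd\mu\ge(1-s-o_\lambda(1))\|u\|_{L^2}^2\gtrsim\|u\|_{L^2}^2$ for all sufficiently large $\lambda$. For the finitely many $\lambda\in\sqrt{\NN}$ below that threshold, I would check that $u\mapsto\int|u|^2\dd\mu$ is positive definite on the finite-dimensional space $E_\lambda$: condition \eqref{eq::condition-measure-fourier-decay} yields $\sum_{k\neq0}|\widehat{\mu}_k|^2|k|^{s_0-d}<\infty$, i.e.\ finiteness of the Riesz $s_0$-energy of $\mu$, for some $s_0>d-1$ (since $2(d-2+\epsilon)>d-1$ for $d\ge3$); hence $\mu$ assigns zero mass to every set of vanishing $\mathcal{H}^{s_0}$-measure, and in particular — the zero set of a non-trivial $u\in E_\lambda$ being contained in a proper real-analytic subvariety, hence $\mathcal{H}^{s_0}$-null — no non-trivial $u\in E_\lambda$ vanishes $\mu$-a.e. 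Taking the minimum of these finitely many positive lower bounds and combining with the large-$\lambda$ estimate gives $\mathfrak{c}_\mu<\infty$. (Alternatively, full observability could be obtained from the trace inequality and the bound $\dim_{\mathrm{H}}\supp\mu>d-1$ via \cite{BurqZhu}.)
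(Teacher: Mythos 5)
Your overall strategy is essentially the paper's: decompose the quadratic form according to Connes' clustering lemma (Lemma~\ref{lemma:Connes}), show the cross-cluster terms are small using the Fourier decay hypothesis and cap counts, and descend in affine dimension until the clusters are contained in lines, where the two-point estimate together with Lemma~\ref{lemma:delta-measure} gives a definite lower bound. You correctly identify where the difficulty lies: the Connes separation $\rho^{2/(m+1)!}$ is in terms of the slice radius $\rho$, which may be far smaller than $\lambda$ at lower levels of the descent, so the cross-terms cannot simply be made negative powers of $\lambda$.

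Where your argument has a genuine gap is in how you propose to close this. The fix you sketch --- ``run the descent with clustering radii taken to be suitable fixed powers of $\lambda$, decreasing along the descent, and track the exponents'' --- cannot work as described, because a slice can have radius $\rho$ that is \emph{bounded but not $O_d(1)$}, e.g.\ $\rho = 1000$ independently of $\lambda$. For such a slice, no fixed power of $\lambda$ lower-bounds the Connes separation, and the cross-terms inside that slice are not $o_\lambda(1)$; they are bounded only by a quantity that depends on $\rho$, i.e.\ a constant. Your claim ``$|\mathcal{E}(u)|\le(s+o_\lambda(1))\|u\|_{L^2}^2$'' is therefore not achievable: for Fourier supports of bounded diameter the off-diagonal part can be a fixed multiple of $s$ exceeding $1-s$, and no amount of exponent bookkeeping in $\lambda$ makes it small. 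The positivity argument you give at the end (Riesz $s_0$-energy, analytic zero sets) only treats the finitely many full eigenspaces $E_\lambda$ with $\lambda$ below a threshold; it does not provide the \emph{uniformity over all translates of a bounded-diameter Fourier support} that is needed to handle small slices occurring at all levels of the descent for arbitrarily large $\lambda$.

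The paper's resolution of this is precisely the missing ingredient: it first proves (Lemma~\ref{lem::trace-obs-low-freq-general}) that the two-sided estimate $\|u\|_{L^2(\dd\mu)}\sim_{d,\mu,r}\|u\|_{L^2}$ holds \emph{uniformly for all} $u$ whose Fourier support has diameter $\le r$, using translation invariance of the problem in Fourier space to reduce to a single compact unit sphere in a finite-dimensional space, together with the fact that $\supp\mu$ is not contained in the zero set of any nonzero trigonometric polynomial (which follows from $\dim_{\mathrm{H}}\supp\mu>d-1$ under \eqref{eq::condition-measure-fourier-decay}). The induction is then carried out on the affine dimension of $\supp\widehat{u}$, with the fixed threshold $r$ (not $\lambda$) as the parameter: at each step, clusters of diameter $\le r$ are handled by that lemma, clusters of diameter $>r$ by the induction hypothesis, and the cross-term error is $o(1)_{r\to\infty}$ uniformly in $\lambda$ --- no exponent bookkeeping in $\lambda$ at all. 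This $r$-based formulation is what replaces the exponent tracking you flag as ``the technical heart,'' and without an analogue of Lemma~\ref{lem::trace-obs-low-freq-general} your descent does not close for observability.
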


\begin{remark} \label{littleremark} 
    A typical example of measures satisfying \eqref{eq::condition-measure-fourier-decay} is given by $\dd \mu = f \dd x$ with the density $f$ being smooth except at the origin, where it behaves as $|f(x)| \sim |x|^{\epsilon-2}$, see Remark~\ref{lastremark} for a related example.
    Note that such a measure is upper $(d-2+\epsilon)$-regular.
\end{remark}

\begin{remark} \label{lastremark}
    The decay rate $d-2$ in the statement is sharp  for the trace inequality.
    More precisely, for any $\epsilon > 0$, one may construct a probability measure $\mu \in \mathcal{P}(\T^d)$ with Fourier decay $|\widehat{\mu}_k| \lesssim |k|^{-(d-2-\epsilon)}$ such that the trace inequality does not hold.
    %When $d=3,4$, we may further include the cases where $\epsilon=0$.
    We define the measure by setting $\dd \mu = C f \dd x$ where $C > 0$ is a normalizing constant that makes $\mu$ a probability measure and
    \begin{equation*}
        f(x) =\sum_{k \in \mathbb{Z}^d} \phi_\epsilon(x+k), \quad \forall x\in [0,1)^d \,,
        \qquad
        \phi_\epsilon(x) = e^{-|x|^2}/|x|^{2+ \epsilon} \quad \forall x \in \R^d \,.
    \end{equation*}
    A straightforward computation leads to $\widehat{f}_0 =  \widehat{\phi}_\epsilon (0) = \frac{1}{C} >0 $ and $\widehat{f}_k =\widehat{\phi}_\epsilon (k) \sim_{d,\epsilon} |k|^{-(d-2-\epsilon)} $ for all $k \in \mathbb{Z}^d \setminus \{ 0\}$.  We conclude the invalidity of the trace inequality by applying Proposition \ref{propnecessary3}, noticing that $f$ is not upper $d-2$-regular. For  $d=3,4$ one may set  $\epsilon =0$  by applying Proposition \ref{prop:d3d4}.

%     Next, let $u = \varphi_{\lambda,0} \in E_\lambda$ be the Bourgain's eigenfunction defined as in \eqref{eq::eigenfunction-bourgain}, precisely $u(x) = \sum_{k \in \mathcal{S}^{d-1}_\lambda}  e^{2\pi i k \cdot x}$. By Parseval $\| u \|_{L^2}^2 = N_d(\lambda)$.
%     Therefore
% \begin{equation*}
%     \frac{1}{\| u \|_{L^2}^2} \int_{\T^d} |u|^2 \dd \mu 
%     = \frac{1}{N_d(\lambda)}  \sum_{k, \ell \in \mathcal{S}^{d-1}_\lambda} \widehat{\mu}_{k- \ell} 
%     \geq N_d(\lambda) \inf_{|\xi|\le 2\lambda} \widehat{\mu}_\xi
%     \gtrsim_{d,\epsilon}  \frac{N_d(\lambda)}{\lambda^{d-2-\epsilon}} \,.
% \end{equation*}
% If $d \ge 5$, then $N_d(\lambda) \gtrsim_d \lambda^{d-2}$, and thus $\frac{N_d(\lambda)}{\lambda^{d-2-\epsilon}} \gtrsim_d \lambda^\epsilon \to \infty$ as $\lambda \to \infty$.
% If $d=3,4$ and $\epsilon = 0$ (the above computation remains valid when $\epsilon = 0$), then along a sequence $\lambda_n \to \infty $, one has $N_d(\lambda_n) \gtrsim \lambda_n^{d-2} \ln \ln \lambda_n$, and thus $\frac{N_d(\lambda_n)}{\lambda_n^{d-2}} \gtrsim \ln \ln \lambda_n \to \infty$ as $n \to \infty$. To conclude the invalidity of the observability inequality, we use Theorem \ref{thm::cardinality-lattice-sphere}.
\end{remark}

Our proof relies on the cluster structure of  $\mathcal{S}^{d-1}_\lambda$, and more specifically on Lemma~\ref{lemma:Connes} due to Connes~\cite{Connes1976coefficients}, which enables a dimensional reduction. The core idea is as follows. If $\mathcal{S}^{d-1}_\lambda = \bigcup_\alpha \Omega_\alpha$ is the decomposition given by Lemma~\ref{lemma:Connes}, then for any $u \in E_\lambda$, we have
\begin{equation}
    \label{eq::OBS-cluster-decomp}
    \|u\|_{L^2 (d\mu)}^2
    = \sum_{k,\ell \in \mathcal{S}^{d-1}_\lambda} \widehat{\mu}_{k-\ell} \, \widehat{u}_k \, \overline{\widehat{u}_\ell}
    = \sum_\alpha \sum_{k,\ell \in \Omega_\alpha} \widehat{\mu}_{k-\ell} \, \widehat{u}_k \, \overline{\widehat{u}_\ell} 
    + \sum_{\alpha \ne \beta} \sum_{k \in \Omega_\alpha} \sum_{\ell \in \Omega_\beta} \widehat{\mu}_{k-\ell} \, \widehat{u}_k \, \overline{\widehat{u}_\ell}.
\end{equation}
Because the clusters $\Omega_\alpha$ are well separated and the measure $\mu$ exhibits Fourier decay, the cross-cluster interaction (the second term) is expected to be small and can be treated as an error term. 
It therefore suffices to establish the trace and observability inequalities for functions whose frequencies lie within a single cluster $\Omega_\alpha$. 
Since each cluster is contained in a lower-dimensional sphere, this structure naturally supports an inductive argument on the dimension.

Note that, in order for the error term to be small, one needs $\lambda$ to be sufficiently large.
More precisely, it turns out from the proof that, one only needs $\diam \supp \widehat{u}$ to be sufficiently large.
This leads to the consideration, for $r > 0$, of the following space:
\begin{equation*}
    \mathcal{T}_r
    = \{u \in C^\infty(\T^d) : \diam \supp \widehat{u} \le r\}.
\end{equation*}
In other words $\mathcal{T}_r$ is the space of all trigonometric polynomials whose Fourier supports have diameters $\le r$.
The following lemma establish quadratic estimates for trigonometric polynomials in $\mathcal{T}_r$, and it will be used in both this and the next section.

\begin{lemma}
\label{lem::trace-obs-low-freq-general}
    If { $\supp  \mu$} is not contained in the zero set of any nonzero trigonometric polynomial, then uniformaly for all $u \in \mathcal{T}_r$, there holds
\begin{equation}
\label{eq::trace-OBS-trig-diam-r}
    \|u\|_{L^2(\dd \mu)} \sim_{d,\mu,r} \|u\|_{L^2}.
\end{equation}
\end{lemma}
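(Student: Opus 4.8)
The plan is to reduce, by a modulation, to a \emph{fixed} finite-dimensional space of trigonometric polynomials and then conclude by a compactness argument.

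First I would observe that if $u \in \mathcal{T}_r$ and $k_0 \in \supp \widehat{u}$, then $\diam \supp \widehat{u} \le r$ forces $\supp \widehat{u} \subset k_0 + \overline{B_r(0)}$. Writing $u(x) = e^{2\pi i k_0 \cdot x} v(x)$, we get $\supp \widehat{v} \subset \overline{B_r(0)}$, i.e. $v \in \mathcal{V}_r := \operatorname{span}\{ e^{2\pi i k \cdot x} : k \in \Z^d,\ |k| \le r \}$, which is a fixed complex vector space of dimension $\#(\Z^d \cap \overline{B_r(0)}) \lesssim_d r^d$. Since $|u(x)| = |v(x)|$ for every $x \in \T^d$, both $\|u\|_{L^2(\dd\mu)} = \|v\|_{L^2(\dd\mu)}$ and $\|u\|_{L^2} = \|v\|_{L^2}$, so it suffices to prove $\|v\|_{L^2(\dd\mu)} \sim_{d,\mu,r} \|v\|_{L^2}$ for all $v \in \mathcal{V}_r$.

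On $\mathcal{V}_r$ the map $v \mapsto \|v\|_{L^2(\dd\mu)}^2 = \sum_{|k|,|\ell| \le r} \widehat{\mu}_{k-\ell}\, \widehat{v}_k\, \overline{\widehat{v}_\ell}$ is a nonnegative continuous Hermitian form in the finitely many coefficients $(\widehat{v}_k)_{|k|\le r}$. For the upper bound one may simply note $\|v\|_{L^2(\dd\mu)} \le \|v\|_{L^\infty} \le \sum_{|k|\le r} |\widehat{v}_k| \lesssim_{d,r} \|v\|_{L^2}$, using that $\mu$ is a probability measure and Cauchy--Schwarz. For the lower bound --- the only place the hypothesis enters --- the form attains a minimum $c \ge 0$ on the $\|\cdot\|_{L^2}$-unit sphere of $\mathcal{V}_r$ (compact, since $\mathcal{V}_r$ is finite-dimensional), say at $v_0$ with $\|v_0\|_{L^2}=1$. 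If $c=0$ then $\int_{\T^d}|v_0|^2\dd\mu = 0$, so the nonzero trigonometric polynomial $v_0$ vanishes $\mu$-a.e., hence on $\supp\mu$, contradicting the assumption that $\supp\mu$ is not contained in the zero set of any nonzero trigonometric polynomial. Thus $c>0$ and $\|v\|_{L^2(\dd\mu)} \ge \sqrt{c}\,\|v\|_{L^2}$ on $\mathcal{V}_r$, with $c$ depending only on $d,\mu,r$. Combining the two bounds gives \eqref{eq::trace-OBS-trig-diam-r}.

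There is no real obstacle here: the content is entirely elementary. The one point requiring care is the modulation step, which turns the non-compact family $\mathcal{T}_r$ (Fourier supports may sit anywhere in $\Z^d$) into the single finite-dimensional space $\mathcal{V}_r$ on which compactness and the non-vanishing hypothesis can be invoked; the $r$-dependence of the constants is unavoidable and allowed by the statement.
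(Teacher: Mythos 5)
Your proof is correct and follows essentially the same route as the paper: both use the translation (modulation) invariance of the two $L^2$-norms to reduce to a fixed finite-dimensional space of trigonometric polynomials, then invoke compactness and the hypothesis on $\supp\mu$ to rule out a zero minimum. Your write-up is a bit more explicit about the modulation step and the argument that a continuous function vanishing $\mu$-a.e.\ must vanish on $\supp\mu$, but the underlying argument is identical.
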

\begin{proof}
    Indeed, by the translation invariance of these norms in the Fourier space, we only need to consider trigonometric polynomials whose Fourier supports are within the $2r$ neighborhood of the origin.
    Such functions form a finite dimensional space. 
    Therefore, there exist the maximum and the minimum of the continuous function 
    $$u \in \{ L^2 (\T^d) : \| u \|_{L^2} =1\} \mapsto \|u\|_{L^2(\dd \mu)}^2\,.$$
    Finally, by the assumption that $\supp \mu$ is not contained in the support of any nonzero trigonometric polynomials we have that the minimum is non-zero, concluding the proof. 
\end{proof}

\begin{remark}
    This lemma applies to any $\mu \in \mathcal{P}(\T^d)$ that satisfies \eqref{eq::condition-measure-fourier-decay}.
    Indeed, by \cite[Theorem 2.8 and Section 3.6]{Mattila2015}, it implies
    \begin{equation}
        \dim_{\mathrm{H}} \supp \mu \geq  \dim_{\mathrm{F}} \supp \mu \ge 2(d-2+\epsilon) > d-1.
    \end{equation}
\end{remark}

In the following, we first present a proof for the $d=3$ case (the proof for the general case will be given afterwards).
A related result is implicit in \cite{BourgainRudnick2012}, where the authors consider measures with Fourier decay $|k|^{-1}$ and rely on the oscillatory structure of the Fourier coefficients. 
In our setting, we assume slightly faster decay, of the form $|k|^{-(1+\epsilon)}$, which enables a simpler argument.

\begin{proof}[Proof of Theorem \ref{thm:decay-fourier}  for $d = 3$.]
    By Lemma~\ref{lem::trace-obs-low-freq-general}, it remains to establish the trace and observability estimates for eigenfunctions with $\diam \supp \widehat{u} > r$ for some sufficiently large $r$.
    
    For any such $L^2$-normalized eigenfunction $u \in E_\lambda$, one has $\lambda \gtrsim r$.
    By Lemma~\ref{lemma:Connes}, write $\mathcal{S}^2_\lambda = \bigcup_\alpha \Omega_\alpha$ where each $\Omega_\alpha$ lives in an affine subspace of dimension $\le 2$ and $\dist(\Omega_\alpha,\Omega_\beta) > C \lambda^{2/4!}$ some $C>0$ whenever $\alpha \ne \beta$.
    Then we invoke the decomposition formula \eqref{eq::OBS-cluster-decomp}. We firstly estimate the second summand. Let $j_\lambda= \max\{j \in \NN : 2^j \le C \lambda^{2/4!}\}$.
    Then
    \begin{align*}
        \biggl| \sum_{\alpha\ne\beta} \sum_{k \in \Omega_\alpha} \sum_{\ell \in \Omega_\beta} \widehat{\mu}_{k-\ell} \widehat{u}_k \overline{\widehat{u}_\ell} \biggr|
        & \le \sum_{|k-\ell| > C \lambda^{2/4!}} |\widehat{\mu}_{k-\ell} \widehat{u}_k \overline{\widehat{u}_\ell}|
        \le \sum_{j \ge j_\lambda} \sum_{2^j \le |k-\ell| \le 2^{j+1}} |\widehat{\mu}_{k-\ell} \widehat{u}_k \overline{\widehat{u}_\ell}| \\
        & \lesssim \sum_{j \ge j_\lambda}  \max_{2^j \le |\xi| \le 2^{j+1}} |\widehat{\mu}_\xi| \sum_{2^j \le |k-\ell| \le 2^{j+1}} \bigl( |\widehat{u}_k|^2 + |\widehat{u}_\ell|^2 \bigr) \\
        & \lesssim \sum_{j \ge j_\lambda} N_3(\lambda,2^{j+1}) \max_{2^j \le |\xi| \le 2^{j+1}} |\widehat{\mu}_\xi|  \\
        & \lesssim_\epsilon \sum_{j \ge j_\lambda} \lambda^{\epsilon/4!} 2^j \times 2^{-j(1+\epsilon)}
        \lesssim \sum_{j \ge j_\lambda} 2^{-j\epsilon/2} = o(1)_{r \to \infty}.
    \end{align*}
    To conclude, we use the following claim: for each $\alpha$, there holds
    \begin{equation}
    \label{eq::embedded-circle-est}
        \sum_{k,\ell \in \Omega_\alpha} \widehat{\mu}_{k-\ell} \widehat{u}_k \overline{\widehat{u}_\ell}  \sim_\mu  \sum_{k \in \Omega_\alpha} |\widehat{u}_k|^2.
    \end{equation}
    Indeed, even though we are in the $d=3$ setting, the sets $(\Omega_\alpha)_\alpha$ each lies within a two dimensional affine hyperplane and is thus contained in a circle of radius $r_\alpha$.
    Therefore the exact same proof as Theorem~\ref{thm::ZBR} (for the Fourier decay case) applies.
    This proof establishes the trace inequality and the semiclassical observability, i.e., the observability inequality holds when $r_\alpha$ is sufficiently large.
    The full observability is guaranteed by Lemma~\ref{lem::trace-obs-low-freq-general}.
\end{proof}

We now turn to the general case $d \geq 3$.
Our idea is to perform mathematical induction on the affine dimension of supports of eigenfunctions.

\begin{definition}
    For any $\Omega \subset \R^d$, we denote by $\dim_{\mathrm{aff}} \Omega$ the dimension of the affine hull of $\Omega$, i.e. $\dim_{\mathrm{aff}} \Omega$ equals to the dimension of the smallest affine subspace in $\R^d$ that contains $\Omega$.
\end{definition}

\begin{proof}[Proof of Theorem \ref{thm:decay-fourier}  for $d\geq 3$.]
    By Lemma~\ref{lem::trace-obs-low-freq-general}, it remains to establish the trace and observability estimates for eigenfunctions with $\diam \supp \widehat{u} > r$ for some sufficiently large $r$.
    
    We will achieve this by using mathematical induction on the affine dimension of $\supp \widehat{u}$.
    Precisely, we will prove the following statement: if $r$ is sufficiently large, then for all $n \in \{1,\ldots,d\}$ and any toral eigenfunction $u$ satisfying  $\diam \supp \widehat{u} > r$  and $\dim_{\mathrm{aff}} \supp \widehat{u} = n$, there holds
    \begin{equation}
    \label{eq::obs-trace-induction-fourier-decay}
        \|u\|_{L^2 (\dd \mu)} \sim_{d,n,\mu,r} \| u \|_{L^2 (\T^d)}.
    \end{equation}
    We use mathematical induction.
    First we prove the case $n=1$.
    In this case  $\# \supp \widehat{u} \le 2$ and we denote $\supp \widehat{u} = \{ k, \ell \}$. 
    Let $\delta >0$ be such that $\sup_{\xi \in \Z^d \setminus 0} |\widehat{\mu} (\xi)| \leq 1 - \delta$ (Lemma \ref{lemma:delta-measure}).
    Then, as done in Section \ref{sec:dimension2}, we have
    \begin{equation*}
        \|u\|_{L^2}^2 \gtrsim \int_{\T^d} |u|^2 \dd \mu = |\widehat{u}_k|^2 + |\widehat{u}_\ell|^2 + 2 \widehat{\mu}_{k-\ell} \widehat{u}_k \widehat{u}_\ell \geq \delta ( |\widehat{u}_k|^2 + |\widehat{u}_\ell|^2 )
    = \delta \|u\|_{L^2}^2.
    \end{equation*}

    We now prove the induction step $n \Rightarrow n+1$ with $n \leq d-1$.
    Fix any $L^2$-normalized eigenfunction $u \in E_\lambda$ with $\dim_{\mathrm{aff}} \supp \widehat{u} = n+1$.
    Now $\supp \widehat{u}$ is contained in an $n$-sphere of radius $\rho \gtrsim r$.
    By Lemma \ref{lemma:Connes}, we write $\supp \widehat{u} = \bigcup_\alpha \Omega_\alpha$ where $\dim_{\mathrm{aff}} \Omega_\alpha \le n$ for all $\alpha$ and $\dist(\Omega_\alpha,\Omega_\beta) \ge C_n \rho^{2/(n+1)!}$ for some $C_n > 0$ whenever $\alpha \ne \beta$.
    Then we invoke the decomposition formula \eqref{eq::OBS-cluster-decomp} and for each $\alpha$, we use the induction hypothesis and obtain
    \begin{equation*}
        \sum_\alpha \sum_{k,\ell \in \Omega_\alpha} \widehat{\mu}_{k-\ell} \widehat{u}_k \overline{\widehat{u}_\ell}
        \sim_{d,n,\delta} \sum_\alpha  \sum_{k \in \Omega_\alpha} |\widehat{u}_k|^2
        = \|u\|_{L^2}^2.
    \end{equation*}
    To estimate the second summand term in \eqref{eq::OBS-cluster-decomp}, we let $j_s = \max\{j \in \NN : 2^j \le C_n s^{2/d!}\}$ and write
    \begin{align*}
        \sum_{|k-\ell| > c_{d-1} \lambda^{2/d!}} |\widehat{\mu}_{k-\ell} \widehat{u}_k \overline{\widehat{u}_\ell}|
        & \le \sum_{j \ge j_\rho} \sum_{2^j \le |k-\ell| \le 2^{j+1}} |\widehat{\mu}_{k-\ell} \widehat{u}_k \overline{\widehat{u}_\ell}| \\
        & \lesssim \sum_{j \ge j_\rho} \max_{2^j \le |\xi| \le 2^{j+1}} |\widehat{\mu}_\xi| \sum_{2^j \le |k-\ell| \le 2^{j+1}}  \bigl( |\widehat{u}_k|^2 + |\widehat{u}_\ell|^2 \bigr) \\
        & \lesssim \sum_{j \ge j_\rho} \sup_{k \in \supp \widehat{u}} \# \bigl(\supp \widehat{u} \cap B_{2^{j+1}}(k)\bigr) \times \max_{2^j \le |\xi| \le 2^{j+1}} |\widehat{\mu}_\xi|.
    \end{align*}
    Next, we separate the discussion of the two cases: $n=d-1$ and $n<d-1$:
    \begin{itemize}
        \item If $n=d-1$, then $\# \bigl(\supp \widehat{u} \cap B_{2^{j+1}}(k)\bigr) \le N_d(\lambda,2^{j+1}) \lesssim_d \lambda^\epsilon 2^{j(d-2)}$ and $\lambda =\rho \ge r$.
        Therefore, the error term is bounded by
        \begin{equation*}
            \sum_{j \ge j_\lambda} \lambda^{ \epsilon/d!} 2^{j(d-2)} \times 2^{-j(d-2+\epsilon)}
            \lesssim \sum_{j \ge j_\lambda} 2^{-j\epsilon/2} 
            = \sum_{j \ge j_r} 2^{-j\epsilon/2} 
            = o(1)_{r \to \infty}.
        \end{equation*}
    
        \item If $n \leq d-2$, then $\# \bigl(\supp \widehat{u} \cap B_{2^{j+1}}(k)\bigr) 
            \lesssim_n 2^{n j} \lesssim 2^{(d-2) j}$ by the area method.
        Therefore, the error term is bounded by
        \begin{equation*}
            \sum_{j \ge j_\rho} 2^{(d-2)j} \times 2^{-j(d-2+\epsilon)} 
            \lesssim \sum_{j \ge j_\rho} 2^{-j\epsilon} 
            = \sum_{j \ge j_r} 2^{-j\epsilon}
            = o(1)_{r \to \infty}.
        \end{equation*}
    \end{itemize}
    
    We finish the induction process by combining these estimates above and formula \eqref{eq::OBS-cluster-decomp}.
    They yield the following estimate:
    \begin{equation*}
        \sum_{k,\ell \in \mathcal{S}^{d-1}_\lambda} \widehat{\mu}_{k-\ell} \widehat{u}_k \overline{\widehat{u}_\ell}
        \sim_{d,n,\delta} 1 + o(1)_{r \to \infty}.
        \qedhere
    \end{equation*}
\end{proof}

When $ d \ge 5$, we can relax the decay condition on $\mu$.

\begin{theorem}
    For $d \ge 5$, the trace and observability inequalities hold if $\mu$ satisfies
    \begin{equation}
    \label{eq::mu-condition-dyadic-proof}
        \sum_{j \ge 0} 2^{j(d-2)} \sup_{|\xi| \in [2^j,2^{j+1}]} |\widehat{\mu}_\xi| < \infty.
    \end{equation}
\end{theorem}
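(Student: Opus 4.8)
The plan is to adapt, almost verbatim, the inductive argument proving Theorem~\ref{thm:decay-fourier} for $d\ge 3$; the one new ingredient is that for $d\ge5$ one may replace the crude cap count $N_d(\lambda,r)\lesssim_\epsilon\lambda^\epsilon r^{d-2}$ by the sharper estimate $N_d(\lambda,r)\lesssim_{d,\epsilon}\lambda^{-1}r^{d-1}+\lambda^\epsilon r^{d-3+\epsilon}$ of Lemma~\ref{lemma:spherical-cap}, which is exactly what matches the scaling of \eqref{eq::mu-condition-dyadic-proof} and makes the slow (merely ``dyadically summable'') decay enough.

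First I would unpack the hypothesis. Writing $b_j=2^{j(d-2)}\sup_{|\xi|\in[2^j,2^{j+1}]}|\widehat\mu_\xi|$, assumption \eqref{eq::mu-condition-dyadic-proof} says $\sum_j b_j<\infty$, so $b_j\to0$; in particular $|\widehat\mu_k|\lesssim|k|^{-(d-2)}$ and $\widehat\mu_k\to0$. From $\widehat\mu_k\to0$ and Lemma~\ref{lemma:delta-measure} we get $\delta:=1-\sup_{k\ne0}|\widehat\mu_k|>0$; from $|\widehat\mu_k|\lesssim|k|^{-(d-2)}$ and the standard energy estimate (as in the remark after Lemma~\ref{lem::trace-obs-low-freq-general}) we get $\dim_{\mathrm{H}}\supp\mu\ge2(d-2)>d-1$ (this is where $d\ge5$ enters), so $\supp\mu$ lies in no zero set of a nonzero trigonometric polynomial and Lemma~\ref{lem::trace-obs-low-freq-general} applies to $\mu$. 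I would then prove, by induction on $n\in\{0,1,\dots,d\}$, that there is $C_n=C_n(d,\mu)$ with $C_n^{-1}\|u\|_{L^2}^2\le\|u\|_{L^2(\dd\mu)}^2\le C_n\|u\|_{L^2}^2$ for every eigenfunction $u$ with $\dim_{\mathrm{aff}}\supp\widehat u=n$, \emph{with no restriction on $\diam\supp\widehat u$}; applied with $n=\dim_{\mathrm{aff}}\supp\widehat u\le d$, this gives the trace and observability inequalities at once (the low-eigenvalue range being included). The cases $n=0$ (a single character) and $n=1$ (the two-point computation of Section~\ref{sec:dimension2}, giving $C_1=\max(2,\delta^{-1})$) are elementary.

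For the step $n\Rightarrow n+1$, fix $u$ with $\dim_{\mathrm{aff}}\supp\widehat u=n+1$; then $\supp\widehat u$ lies on a sphere of radius $\rho$, with $\rho=\lambda$ when $n+1=d$, and $\diam\supp\widehat u\le2\rho$. For a threshold $R_{n+1}=R_{n+1}(d,\mu)$ fixed below: if $\diam\supp\widehat u\le R_{n+1}$ then $u\in\mathcal{T}_{R_{n+1}}$ and Lemma~\ref{lem::trace-obs-low-freq-general} gives the bound directly; otherwise $\rho>R_{n+1}/2$ and I apply Lemma~\ref{lemma:Connes} to write $\supp\widehat u=\bigcup_\alpha\Omega_\alpha$ with $\dim_{\mathrm{aff}}\Omega_\alpha\le n$ and $\dist(\Omega_\alpha,\Omega_\beta)\gtrsim\rho^{c}$ for $\alpha\ne\beta$, where $c=c(n,d)\in(0,1)$. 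In the decomposition \eqref{eq::OBS-cluster-decomp}, the diagonal term is $\sum_\alpha\|u_\alpha\|_{L^2(\dd\mu)}^2$ where $u_\alpha=\sum_{k\in\Omega_\alpha}\widehat u_k e^{2\pi ik\cdot x}$ has affine dimension $\le n$; by the restriction-free inductive hypothesis this is $\sim_{d,n,\mu}\sum_\alpha\|u_\alpha\|_{L^2}^2=\|u\|_{L^2}^2$. For the off-diagonal term $B=\sum_{\alpha\ne\beta}\sum_{k\in\Omega_\alpha,\ell\in\Omega_\beta}\widehat\mu_{k-\ell}\widehat u_k\overline{\widehat u_\ell}$, bounding $|\widehat\mu_{k-\ell}\widehat u_k\overline{\widehat u_\ell}|\le|\widehat\mu_{k-\ell}|\bigl(|\widehat u_k|^2+|\widehat u_\ell|^2\bigr)/2$ and decomposing $|k-\ell|\in[\rho^{c},2\rho]$ dyadically gives $|B|\lesssim\|u\|_{L^2}^2\sum_{j\ge j_\rho}M_j\,b_j\,2^{-j(d-2)}$, with $j_\rho\sim c\log_2\rho\to\infty$ and $M_j=\sup_k\#\bigl(\supp\widehat u\cap B_{2^{j+1}}(k)\bigr)$. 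When $n+1\le d-1$ the area method gives $M_j\lesssim_n2^{j(d-2)}$, so $|B|\lesssim\|u\|_{L^2}^2\sum_{j\ge j_\rho}b_j\to0$ as $\rho\to\infty$. When $n+1=d$, so $\rho=\lambda$ and $M_j\le N_d(\lambda,2^{j+1})$, feeding in the sharp cap bound yields $|B|\lesssim_{d,\epsilon}\|u\|_{L^2}^2\bigl(\lambda^{-1}\sum_j2^jb_j+\lambda^\epsilon\sum_j2^{j(\epsilon-1)}b_j\bigr)$; using $2^j\le2\lambda$ in the first sum and choosing the free parameter $\epsilon$ small enough in terms of $d$ (relative to $c$) in the second, both are $\lesssim_d\sup_{j\ge j_\lambda}b_j\to0$. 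Thus in all cases $|B|\le\theta(R_{n+1})\|u\|_{L^2}^2$ with $\theta(R)\to0$ and $\theta$ depending only on $d,\mu$; picking $R_{n+1}$ so that $\theta(R_{n+1})<\frac{1}{2}\min_{0\le m\le n}C_m^{-1}$ and combining with the short-frequency case closes level $n+1$ with a constant $C_{n+1}=C_{n+1}(d,\mu)$.

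The crux — and the only place $d\ge5$ is used beyond the dimension bound on $\supp\mu$ — is the off-diagonal estimate in the top-dimensional case: under the slow decay of \eqref{eq::mu-condition-dyadic-proof} the crude count $\lambda^\epsilon r^{d-2}$ is useless, since it would leave $\lambda^\epsilon\sum_{j\ge j_\lambda}b_j$ whose decay rate is not controlled; one must instead exploit the genuine gain $\lambda^{-1}r^{d-1}+\lambda^\epsilon r^{d-3+\epsilon}$, whose first term cancels the factor $2^{-j(d-2)}$ in the dyadic sum down to a single $2^j\le2\lambda$, producing a bound governed purely by $\sup_{j\ge j_\lambda}b_j$, after a careful balancing of $\epsilon$ against the small Connes separation exponent $c$. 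A secondary point needing attention is keeping all constants independent of $r$: this works precisely because the short-frequency regime at each level is discharged directly through Lemma~\ref{lem::trace-obs-low-freq-general}, not through the inductive hypothesis, so no growing cutoff is ever propagated downward.
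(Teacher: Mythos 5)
Your proposal is correct and follows essentially the same route as the paper: (i)~deduce $|\widehat\mu_k|=o(|k|^{-(d-2)})$ and hence $\dim_{\mathrm H}\supp\mu>d-1$, so Lemma~\ref{lem::trace-obs-low-freq-general} applies; (ii)~run the cluster-decomposition induction of Theorem~\ref{thm:decay-fourier} on $\dim_{\mathrm{aff}}\supp\widehat u$; (iii)~in the top-dimensional case, replace the crude cap count by the finer $d\ge5$ bound $N_d(\lambda,r)\lesssim\lambda^{-1}r^{d-1}+\lambda^\epsilon r^{d-3+\epsilon}$, which on the relevant dyadic range $\lambda^{c}\lesssim 2^j\lesssim\lambda$ gives $N_d(\lambda,2^{j+1})\lesssim 2^{j(d-2)}$ after balancing $\epsilon$ against the Connes separation exponent, so the off-diagonal term is controlled by a tail of the summable sequence $b_j$. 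One helpful bookkeeping improvement in your version: you phrase the inductive hypothesis with no diameter restriction and discharge the short-frequency case at each dimension level via Lemma~\ref{lem::trace-obs-low-freq-general}; this avoids the awkwardness that clusters $\Omega_\alpha$ produced by Lemma~\ref{lemma:Connes} may themselves have small diameter and so would not strictly fall under a diameter-restricted inductive hypothesis. Two small inaccuracies, neither of which affects the argument: the inequality $2(d-2)>d-1$ already holds for $d\ge4$, so the dimension-of-support step is not where $d\ge5$ is genuinely needed (only the cap estimate of Lemma~\ref{lemma:spherical-cap} requires $d\ge5$); and when you bound the first piece $\lambda^{-1}\sum_j 2^j b_j$ by $\sup_{j\ge j_\lambda}b_j$ you are implicitly using $\sum_{j:\,2^j\le 2\lambda}2^j\lesssim\lambda$, which is fine but worth stating.
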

\begin{proof}
    The proof follows the same outline as before, with only minor adjustments noted below:
    \begin{itemize}
        \item The condition \eqref{eq::mu-condition-dyadic-proof} implies the Fourier decay $|\widehat{\mu}_\xi| = o(|\xi|^{-(d-2)})_{\xi \to \infty}$.
        Therefore we still have $\dim_{\mathrm{H}} \supp \mu > d-1$ to apply Lemma~\ref{lem::trace-obs-low-freq-general}.
        
        \item In the $n=d-1$ case, we use the finer estimate $N_d(\lambda,2^{j+1}) \lesssim_{d,\epsilon} \lambda^{-1} 2^{j (d-1)} + \lambda^{\epsilon/d!} 2^{j(d-3+\epsilon)}$.
        Since it suffices to sum over those $j$ with $ { \lambda^{2/d!} } \lesssim 2^j \lesssim \lambda$, we may further bound $N_d(\lambda,2^{j+1}) \lesssim 2^{j(d-2)}$.
        Therefore, the error term is bounded by
        \begin{equation*}
            \sum_{j \ge j_\lambda} 2^{j(d-2)} \sup_{2^j \le |\xi| \le 2^{j+1}} |\widehat{\mu}_\xi|
            \le \sum_{j \ge j_r} 2^{j(d-2)} \max_{2^j \le |\xi| \le 2^{j+1}} |\widehat{\mu}_\xi| = o(1)_{r \to \infty}.
        \end{equation*}
        
        \item In the $n\leq  d-2$ case, it suffices to notice that, by keeping $\max_{2^j \le |\xi| \le 2^{j+1}} |\widehat{\mu}_\xi|$ in the summation, the remainder term satisfies the same estimate:
        \begin{equation*}
            \sum_{j \ge j_r} 2^{j(d-2)} \sup_{2^j \le |\xi| \le 2^{j+1}} |\widehat{\mu}_\xi| = o(1)_{r \to \infty}.\qedhere
        \end{equation*}
    \end{itemize}
\end{proof}

\section{Sobolev regularity}
\label{sectionsobolev}

In this section we prove observability and trace inequalities under suitable Sobolev regularity assumptions of the measure.
They are closely related to the estimates with subpolynomial loss 
\begin{equation*}
    \| u \|_{L^p} \lesssim \lambda^\epsilon \| u \|_{L^2},
    \quad p \le \frac{2(d+1)}{d-1},
\end{equation*}
for any eigenfunction $u \in E_\lambda$. If $d=3$, this estimate was mentioned without proof in \cite{Bourgain1997}, and was generalized to $d \geq 3$ through the $\ell^2$ decoupling theorem \cite{BourgainDemeter}.

\begin{theorem} \label{thm:sobolev}
    For $d \geq 3$, the trace and observability inequality holds true if $ \dd \mu = g \dd x$ where $g \in W^{\epsilon,\frac{d+1}{2}} (\T^d)$ for some $\epsilon>0$.
    Furthermore, under the same assumption, this trace inequality is uniform in the sense that for all toral eigenfunctions:
    \begin{equation}
    \label{eq::trace-uniform-sobolev}
        \int_{\T^d} |u|^2 \dd \mu \lesssim_{ d,\epsilon} \| g \|_{W^{\epsilon , \frac{d+1}{2}  }} \| u \|_{L^2}^2.
    \end{equation}
\end{theorem}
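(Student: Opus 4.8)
The plan is to prove the uniform trace estimate \eqref{eq::trace-uniform-sobolev} --- from which the trace inequality is immediate --- and to establish the observability inequality separately. Throughout, put $p = \frac{2(d+1)}{d-1}$, the $\ell^2$-decoupling exponent, so that its dual conjugate $\bigl(\frac p2\bigr)' = \frac{d+1}{2}$. The single external input is the Bourgain--Demeter theorem \cite{BourgainDemeter} in the guise of the toral eigenfunction bound $\|u\|_{L^p(\T^d)} \lesssim_{d,\delta} \lambda^\delta \|u\|_{L^2}$, valid for every $u \in E_\lambda$ and every $\delta>0$ (and its analogue on lower-dimensional tori). Combined with Hölder's inequality this already gives $\int_{\T^d} |u|^2 g \le \|u\|_{L^p}^2 \|g\|_{L^{(d+1)/2}} \lesssim_{d,\delta} \lambda^{2\delta}\|g\|_{L^{(d+1)/2}}\|u\|_{L^2}^2$, i.e.\ the trace inequality \emph{with an $\epsilon$-loss}; the entire point is that the $\epsilon$ derivatives of Sobolev regularity suffice to absorb this loss.

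To that end I would use a Littlewood--Paley decomposition $g = \sum_{j\ge 0} g_j$ ($g_j = P_j g$, with $\supp\widehat{g_j}\subset\{|\xi|\sim 2^j\}$ for $j\ge 1$ and $g_0$ collecting the frequencies $|\xi|\lesssim 1$) together with the \emph{frequency-localized} estimate
\begin{equation}
\label{eq:plan-sob-loc}
  \Bigl|\int_{\T^d}|u|^2 g_j\,\dd x\Bigr| \lesssim_{d,\delta} 2^{j\delta}\,\|g_j\|_{L^{(d+1)/2}}\,\|u\|_{L^2}^2, \qquad u\in E_\lambda, \ j\ge 1,
\end{equation}
whose crucial feature is that the loss is $2^{j\delta}$, not $\lambda^\delta$. (The block $g_0$ is handled directly: $\bigl|\int_{\T^d}|u|^2 g_0\,\dd x\bigr|\le\|u\|_{L^2}^2\sum_{|\xi|\lesssim 1}|\widehat g_\xi|\lesssim_d\|g\|_{L^1}\|u\|_{L^2}^2\le\|g\|_{W^{\epsilon,(d+1)/2}}\|u\|_{L^2}^2$, using $|\widehat{|u|^2}_\xi|\le\|u\|_{L^2}^2$.) Since $\widehat{|u|^2}$ is supported in $\{|\xi|\le 2\lambda\}$ for $u\in E_\lambda$, only the blocks with $2^j\lesssim\lambda$ contribute, so granting \eqref{eq:plan-sob-loc}, choosing $\delta\in(0,\epsilon)$, and using the Littlewood--Paley characterization $2^{j\epsilon}\|g_j\|_{L^{(d+1)/2}}\lesssim\|g\|_{W^{\epsilon,(d+1)/2}}$, summation over $j$ yields
\begin{equation*}
  \int_{\T^d}|u|^2 g\,\dd x \lesssim_{d,\delta}\Bigl(1+\sum_{j\ge 1}2^{j(\delta-\epsilon)}\Bigr)\|g\|_{W^{\epsilon,(d+1)/2}}\|u\|_{L^2}^2 \lesssim_{d,\epsilon}\|g\|_{W^{\epsilon,(d+1)/2}}\|u\|_{L^2}^2,
\end{equation*}
which is \eqref{eq::trace-uniform-sobolev}, and hence the trace inequality.

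The proof of \eqref{eq:plan-sob-loc} is where I expect the main difficulty to lie. I would decompose $u = \sum_\theta u_\theta$ according to a partition of $\mathcal{S}^{d-1}_\lambda$ into caps $\theta$ of diameter $\sim 2^j$. If $\dist(\theta,\theta')\gg 2^j$, then $u_\theta\overline{u_{\theta'}}$ has Fourier support disjoint from $\supp\widehat{g_j}$ and therefore does not contribute, so $\int_{\T^d}|u|^2 g_j\,\dd x = \sum_{\dist(\theta,\theta')\lesssim 2^j}\int_{\T^d}u_\theta\overline{u_{\theta'}}g_j\,\dd x$, a sum in which each cap has only $O_d(1)$ partners; Hölder in $x$ and Cauchy--Schwarz in the cap indices then reduce \eqref{eq:plan-sob-loc} to the \emph{localized $\ell^2$-decoupling inequality}
\begin{equation*}
  \sum_\theta\|u_\theta\|_{L^p(\T^d)}^2 \lesssim_{d,\delta} 2^{j\delta}\,\|u\|_{L^2}^2 \qquad\text{for a partition of }\mathcal{S}^{d-1}_\lambda\text{ into caps of diameter }2^j.
\end{equation*}
The obstacle is that this must improve on the naive ``Bernstein on each cap'' bound and produce a gain governed by the cap scale $2^j$ rather than by $\lambda$; I would derive it from a suitably rescaled form of the $\ell^2$-decoupling theorem together with the cap count $N_d(\lambda,2^j)\lesssim_{d,\delta}\lambda^\delta 2^{j(d-2)}$ of Lemma~\ref{lemma:spherical-cap}, applied recursively --- note that it degenerates to the eigenfunction $L^p$-bound when $2^j\sim\lambda$ and to Plancherel, $\sum_\theta\|u_\theta\|_{L^2}^2=\|u\|_{L^2}^2$, for $2^j$ bounded. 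For $d=2$ decoupling is unnecessary: by Jarník's lemma (Lemma~\ref{lemma:Connes}, \eqref{eq::jarnik}) each cap carries at most two lattice points and $\#\mathcal{S}^1_\lambda\lesssim_\delta\lambda^\delta$, so the analogue of \eqref{eq:plan-sob-loc} holds with an $O(1)$ loss for \emph{every} exponent $p\ge 1$, which is the stated relaxation to $W^{\epsilon,p}(\T^2)$.

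For the observability inequality, note first that $\mu=g\,\dd x$ is not supported on the zero set of any nonzero trigonometric polynomial (such a set is Lebesgue-null, whereas $\int g=1$), so Lemma~\ref{lem::trace-obs-low-freq-general} reduces it to the regime $\diam\supp\widehat u>r$ for a large fixed $r$. There I would run the induction on $\dim_{\mathrm{aff}}\supp\widehat u$ from the proof of Theorem~\ref{thm:decay-fourier}: splitting $\supp\widehat u$ via Lemma~\ref{lemma:Connes} into clusters $\Omega_\alpha$ that are pairwise $\gtrsim\rho^{c_d}$-separated (with $\rho\gtrsim\diam\supp\widehat u$ and $c_d>0$ depending only on $d$), the within-cluster contributions are controlled by the induction hypothesis, and the base case $\dim_{\mathrm{aff}}=1$ is settled by $1-\sup_{\xi\ne0}|\widehat g_\xi|>0$ (Lemma~\ref{lemma:delta-measure}, applicable since $\widehat g_\xi\to0$). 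The only new point is the cross-cluster term: setting $h=|u|^2-\sum_\alpha|u_\alpha|^2$, whose Fourier support consists of frequencies $\gtrsim\rho^{c_d}$, for $R\le\rho^{c_d}$ one has $\int_{\T^d}h\,g\,\dd x=\int_{\T^d}h\sum_{2^j\ge R}g_j\,\dd x$, so Hölder (with exponents $\tfrac{d+1}{d-1}$ and $\tfrac{d+1}{2}$) gives
\begin{equation*}
  \Bigl|\sum_{\alpha\ne\beta}\int_{\T^d}u_\alpha\overline{u_\beta}\,g\,\dd x\Bigr| \le \|h\|_{L^{(d+1)/(d-1)}}\sum_{2^j\ge R}\|g_j\|_{L^{(d+1)/2}} \lesssim_{d,\delta}\rho^{2\delta}\,R^{-\epsilon}\,\|g\|_{W^{\epsilon,(d+1)/2}}\,\|u\|_{L^2}^2,
\end{equation*}
where $\|h\|_{L^{(d+1)/(d-1)}}\le\|u\|_{L^p}^2+\sum_\alpha\|u_\alpha\|_{L^p}^2\lesssim_{d,\delta}\rho^{2\delta}\|u\|_{L^2}^2$ by the eigenfunction bound (the term $\|u_\alpha\|_{L^p}$ being estimated on the lower-dimensional torus associated with $\Omega_\alpha$). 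Choosing $R\sim\rho^{c_d}$ and $\delta$ small makes this $o(\|u\|_{L^2}^2)$ as $\rho\to\infty$, which closes the induction and, together with Lemma~\ref{lem::trace-obs-low-freq-general}, yields the observability inequality. Alternatively, once \eqref{eq::trace-uniform-sobolev} places $g$ in a Banach space satisfying the uniform trace inequality, one may instead invoke the implication ``trace inequality $\Rightarrow$ observability inequality'' of \cite{BurqZhu}.
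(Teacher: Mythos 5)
Your trace-inequality argument hinges entirely on the ``localized $\ell^2$-decoupling inequality'' $\sum_\theta\|u_\theta\|_{L^p(\T^d)}^2 \lesssim_{d,\delta} 2^{j\delta}\|u\|_{L^2}^2$ at cap scale $2^j$, and this is where the proposal breaks down. You acknowledge it is the main difficulty but offer only a one-line sketch (``rescaled decoupling + cap count, applied recursively''). The issue is that the natural argument gives loss $\lambda^\delta$, \emph{not} $2^{j\delta}$: for example in $d=3$, $p=4$, one has $\|u_\theta\|_{L^4}^4 \le \max_n R_\theta(n)\cdot\|u_\theta\|_{L^2}^4$ where $R_\theta(n)$ counts pairs $k+\ell=n$, $k,\ell\in\theta\cap\mathcal{S}^2_\lambda$; these pairs lie on a circle of radius up to $\sim\lambda$, and the divisor bound controls the count by $\lambda^\delta$, with no improvement from the cap size $2^j$. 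With only $\lambda^\delta$, your Littlewood--Paley summation $\sum_{j}2^{j\delta}2^{-j\epsilon}\to\sum_j\lambda^\delta 2^{-j\epsilon}$ produces an overall $\lambda^\delta$ loss and the uniform bound \eqref{eq::trace-uniform-sobolev} is lost. To upgrade $\lambda^\delta$ to $2^{j\delta}$ one would need to exploit the Connes cluster structure inside each cap and run an induction on affine dimension --- which is exactly what the paper does, but organized around a single frequency projector rather than a full dyadic decomposition. The paper's proof splits $g = \mathbb{P}_N g + (1-\mathbb{P}_N)g$ at the cluster-separation scale $N\sim\rho^{2/(n+2)!}$: the low-frequency piece sees only within-cluster pairs $(k,\ell)$ and is handled by the induction hypothesis (both the upper and lower bounds), while the high-frequency piece is dispatched in one stroke by H\"older, the decoupling $L^p$ bound (loss $\rho^\delta=N^{\delta'}$), and the Sobolev gain $N^{-\epsilon}$. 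This avoids any localized decoupling at intermediate scales.

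Your observability argument, by contrast, is essentially the paper's unified induction --- including the two-sided within-cluster equivalence, the cross-cluster term $h=|u|^2-\sum_\alpha|u_\alpha|^2$ with Fourier support at frequencies $\gtrsim\rho^{c_d}$, and H\"older against the high-frequency tail of $g$. (This is notationally a sharp Littlewood--Paley cutoff where the paper uses a spectral projector $\mathbb{P}_N$; the substance is the same, with $\|h\|_{L^{p/2}}$ controlled by decoupling on the $n$-sphere and on the lower-dimensional spheres carrying each $\Omega_\alpha$.) Observe, though, that this induction establishes the \emph{two-sided} estimate $\|u\|_{L^2(d\mu)}^2\sim\|u\|_{L^2}^2$, i.e.\ it already gives the trace inequality --- the paper proves both at once this way, and you should too. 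If you run the induction carefully, tracking constants as the paper does in \eqref{eq:lower-induct} and the subsequent display, you recover \eqref{eq::trace-uniform-sobolev} as well, and the entire Littlewood--Paley/cap/localized-decoupling machinery becomes unnecessary.
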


\begin{remark}
    It is interesting and relevant in applications to apply this theorem to characteristic functions of sets, i.e., $\dd \mu = \mathbf{1}_E \dd x$.
    In Appendix \ref{sec:appendixA}, we explore the case where $C$ is a fat Cantor set and prove that $\mathbf{1}_C \in W^{\epsilon, 1}$ for suitable $\epsilon >0$. See also  \cite{Lomb19} for the fractional Sobolev regularity of $\mathbf{1}_S$ where  $S \subset \R^2$ is the von Koch snowflake.
\end{remark}

\begin{remark}
    Since now the measure $\mu$ is absolutely continuous with respect to the Lebesgue measure on $\T^d$, Lemma~\ref{lem::trace-obs-low-freq-general} applies.
    % Similarly, as in its proof, it suffices to consider the finite dimensional space of trigonometric polynomials whose Fourier supports are within the $2r$ neighborhood of the origin.
    % This finite dimensionality implies the uniform bound for $u \in \mathcal{T}_r$
    % \begin{equation*}
    %     \int_{\T^d} |u|^2 \dd \mu
    %     \le \|g\|_{W^{\epsilon,\frac{d+1}{2}}} \||u|^2\|_{W^{-\epsilon,\frac{d+1}{d-1}}}
    %     \lesssim_{d,\epsilon,r} \|g\|_{W^{\epsilon,\frac{d+1}{2}}} \|u\|_{L^2}^2\,.
    % \end{equation*}
\end{remark}

We chose to provide first a separate proof for the case $d = 3$ which relies on elementary tools. It is then generalized to the case $d \geq 3$ with the help of the $\ell^2$ decoupling theorem \cite{BourgainDemeter}.

\begin{proof}[Proof of Theorem \ref{thm:sobolev} for $d=3$]
    The proof follows the same structure as that of Theorem~\ref{thm:decay-fourier}, specifically the proof for the $d=3$ case.
    That is, we apply the cluster decomposition and invoke the formula \eqref{eq::OBS-cluster-decomp}.
    We now  estimate the second summand in \eqref{eq::OBS-cluster-decomp}. We first observe that, by Lemma~\ref{lemma:num-theo-sphere}, for all $\epsilon > 0$ it holds
    \begin{align*}
        \sum_{|k- \ell|> C \lambda^{2/4!}}|\widehat{\mu}_{k-\ell}|^2  & \leq  \sum_{|\xi| > C \lambda^{2/4!}} \# \Bigl(
    (-\xi + \mathcal{S}^2_\lambda) \cap \mathcal{S}^2_\lambda \Bigr) \times |\widehat{\mu}_{\xi}|^2  \\
    & \lesssim_\epsilon \lambda^{\epsilon/4!} \sum_{|\xi| > C \lambda^{2/4!}}   |\widehat{\mu}_{\xi}|^2
    \lesssim \sum_{|\xi| > C \lambda^{2/4!}}  |\xi|^{2\epsilon} |\widehat{\mu}_{\xi}|^2.
    \end{align*}
    Hence, by the Cauchy Schwarz inequality and using $g \in H^\epsilon$,
    \begin{align*}
    \sum_{|k- \ell| > C \lambda^{2/4!}} |\widehat{\mu}_{k-\ell} \widehat{u}_k \overline{\widehat{u}_\ell}|  
    & \leq \biggl ( \sum_{|k- \ell| > C \lambda^{2/4!}} |\widehat{\mu}_{k-\ell}|^2 \biggr )^{1/2}  \biggl (\sum_{|k- \ell| > C \lambda^{2/4!}} | \widehat{u}_k \overline{\widehat{u}_\ell}|^2 \biggr )^{1/2} \\
    & \lesssim_\epsilon \biggl ( \sum_{|\xi| > C \lambda^{2/4!}} |\xi|^{2 \epsilon} | \widehat{\mu}_{\xi}|^2 \biggr )^{1/2} \| u \|_{L^2}^2
    = o(1)_{\lambda \to \infty}.
    \end{align*}
    Then for each $\alpha$, we obtain the estimate \eqref{eq::embedded-circle-est} by the exact same proof as in Theorem~\ref{thm::ZBR} (the case with $L^2$ densities).
    Particularly, the constant for the trace inequality is bounded by $\|g\|_{H^\epsilon}$.
\end{proof}

We now turn to the general case $d \geq 3$ using the decoupling theory \cite{BourgainDemeter}.

\begin{proof}[Proof of Theorem \ref{thm:sobolev} for $d \geq 3$]
    It remains to establish \eqref{eq::obs-trace-induction-fourier-decay} for any toral eigenfunction $u$ satisfying $\diam \supp \widehat{u} > r$, where $r$ is sufficiently large, and $\dim_{\mathrm{aff}} \supp \widehat{u} = n$.
    Regarding the uniform trace inequality, we will also show that \eqref{eq::trace-uniform-sobolev} for such eigenfunctions.
    
    We use mathematical induction. The case $n=1$ is the same as in the proof of Theorem \ref{thm:decay-fourier}.
    We now prove the inductive step $n \Rightarrow n+1$ with $n \leq d-1$. We perform the cluster decomposition on the $n$-sphere that supports $\widehat{u}$, where $u$ is an $L^2$-normalized eigenfunction.
    Let $\rho$ be the radius of this sphere, then $\rho \gtrsim r$.
    Recall that these clusters are well-separated: if $\alpha \ne \beta$, then $\dist(\Omega_\alpha,\Omega_\beta) > N = C_n \rho^{2/(n+2)!} $.
    Let $\mathbb{P}_N$ be a smooth spectral projector onto the frequency set $\{k \in \Z^d : |k| \le N\}$, and write
    \begin{equation}
    \label{splitting}
    \int_{\T^d} |u|^2 g \dd x 
    = \int_{\T^d} |u|^2 \mathbb{P}_N g \dd x 
    + \int_{\T^d} |u|^2 (1-\mathbb{P}_N) g \dd x.
    \end{equation}
    
    To bound the first term on the right-hand side of \eqref{splitting}, we use
    \begin{align*}
    \int_{\T^d} |u|^2 { \mathbb{P}_N g} \dd x 
    = \sum_{\alpha} \sum_{k, \ell \in \Omega_{\alpha}} \widehat{\mathbb{P}_N g}_{k-\ell} \widehat{u}_k \overline{\widehat{u}_\ell} .
    \end{align*}
    Since $\Omega_\alpha$ is contained in an affine $n$-plane, we may apply the inductive hypothesis for each $\alpha$
    \begin{align} \label{eq:lower-induct}
         \sum_{k \in \Omega_\alpha} |\widehat{u}_k|^2
         \lesssim_{d,n,\delta}
         \sum_{k, \ell \in \Omega_{\alpha}} \widehat{\mathbb{P}_N g}_{k-\ell} \widehat{u}_k \overline{\widehat{u}_\ell}  \lesssim_{d,n, \delta} \|\mathbb{P}_N g\|_{W^{\epsilon,\frac{d+1}{2}}} \sum_{k \in \Omega_\alpha} |\widehat{u}_k|^2.
    \end{align}
    We now bound the second term in the right-hand side of \eqref{splitting}. We first observe that 
    for $s \ge 0$ and $p\in [1,\infty)$, there holds
    \begin{equation*}
     \|\mathbb{P}_N\|_{W^{s,p} \to W^{s,p}} \lesssim_{d,s,p} 1,
     \quad
     \| (1- \mathbb{P}_N)g \|_{W^{s,p} \to L^p} \lesssim_{d,s,p} N^{-s}.
    \end{equation*}
    Using this, the H\"older's inequality, the decoupling theorem 
    \cite[Theorem 2.2]{BourgainDemeter} we conclude 
    \begin{align*}
    \int_{\T^d} |u |^2 (1-\mathbb{P}_N) (g) \dd x 
    & \le \| u \|_{L^{ \frac{d+1}{d-1}}}^2 \| (1-\mathbb{P}_N) g \|_{ L^{\frac{d+1}{2}}} 
    \lesssim_{d,\epsilon} N^{\epsilon/2}  \| u \|_{L^2}^2 \| (1-\mathbb{P}_N)g \|_{ L^{\frac{d+1}{2}}}
    \\
    & \lesssim_{d,\epsilon} N^{- \epsilon/2}  \| u \|_{L^2}^2 \| (1-\mathbb{P}_N)g \|_{ W^{\epsilon, \frac{d+1}{2}}} \lesssim \| u \|_{L^2}^2 \| g \|_{ W^{\epsilon, \frac{d+1}{2}}} = o(1)_{r \to \infty} \,.
    \qedhere
    \end{align*}
\end{proof}

\appendix

\section{Sobolev regularity of sets} \label{sec:appendixA}

\subsection{The basic question}

From our discussions in Section~\ref{sectionperspectives}, a natural question is to characterize the Sobolev regularity of measurable sets. 
Recall that, for $s \in [0,\infty)$, $p\in [0,\infty)$ and an open set $G \subset \R^d$, the fractional Sobolev space $W^{s,p}(G)$ is defined as
\begin{equation*}
    W^{s,p}(G) = \bigl\{ f \in L^p(G) : [f]_{W^{s,p}} < \infty \bigr\},
\end{equation*}
where $[f]_{W^{s,p}}$ is the Gagliardo seminorm given by
\begin{equation}
\label{eq::Gagliardo-seminorm}
    [f]_{W^{s,p}} \coloneqq \left( \int_G \int_G \frac{|f(x) - f(y)|^p}{|x-y|^{d + sp}} \dd x \dd y \right)^{1/p}.
\end{equation}
Applying \eqref{eq::Gagliardo-seminorm} to the characteristic function of $E \subset G$, we see that if $sp< 1$, then
\begin{equation*}
    \| \mathbf{1}_E \|_{[W^{s,p}]} = \left[ 2\int_{E} \int_{G\setminus E} \frac{1}{|x-y|^{sp+d} } \dd x \dd y \right]^{1/p} = \| \mathbf{1}_E \|_{[W^{sp,1}]}.
\end{equation*}
Since we are interested in the range $sp< 1$, we will focus in the following on the case $p=1$.

\subsection{The example of the fat Cantor set}

We construct the fat Cantor set, a modified version of the classical ternary Cantor set such that at the $(n+1)$-th step we remove $2^n$ sets of size $\alpha^{n+1}$ where  $\alpha \in (0, 1/3)$. In the classical construction of the Cantor set one chooses $\alpha=1/3$. 

We construct the fat Cantor set with parameter $\alpha$ by an iterative process. 
Let $C_0 = [0,1] $. 
Suppose that for $n \in \NN$, we are given $C_n$ as a disjoint union of $2^n$ closed intervals $ \{ C_{k, n} \}_{k=1}^{2^n}$ of length larger than $\alpha^n$.
Let $I_{k, n} $ is the unique open subinterval of $C_{k, n}$ with the same center of $C_{k, n}$ and of length $\alpha^{n+1}$.
Then write
\begin{equation*}
    C_{k, n} \setminus I_{k, n} = C_{2k-1,n+1} \cup C_{2k,n+1}
\end{equation*}
as a disjoint union of closed intervals with length larger than $\frac{1}{2}(\alpha^n - \alpha^{n+1}) > \alpha^{n+1}$.
Then define $C_{n+1} \subset C_n$ as the union of $ \{C_{k,n+1}\}_{k=1}^{2^{n+1}}$.
Finally, we define 
\begin{align} \label{eq:cantor-int}
     \mathcal{C}_\alpha = \bigcap_{n \ge 0} C_n
     = \bigcap_{n \ge 0} \bigcup_{1 \le k \le 2^n} C_{k, n}
     = C_0 \setminus \bigcup_{n \ge 0} \bigcup_{1 \le k \le 2^n} I_{k, n} .
\end{align}

\begin{lemma}
    If $\alpha \in (0,1/3)$ and $\epsilon \in  (0, 1 + \ln 2 / \ln \alpha )$, then $\mathbf{1}_{\mathcal{C}_\alpha \setminus \{0,1\}} \in W^{\epsilon, 1} (0,1)$.
\end{lemma}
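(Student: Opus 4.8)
The plan is to reduce the claim to the convergence of a single double integral, and then to estimate that integral scale by scale over the removed intervals. Since $\ln\alpha<0$ and $\epsilon<1+\ln 2/\ln\alpha$, we have in particular $\epsilon<1$, so the identity displayed just before the lemma applies with $p=1$, $s=\epsilon$, $d=1$, $E=\mathcal{C}_\alpha$ and $G=(0,1)$. Because $\{0,1\}$ is Lebesgue-null, $\mathbf{1}_{\mathcal{C}_\alpha\setminus\{0,1\}}=\mathbf{1}_{\mathcal{C}_\alpha}$ in $L^1(0,1)$ (and this function is trivially in $L^1$), so it suffices to prove that
\[
I_\epsilon:=\int_{\mathcal{C}_\alpha}\int_{(0,1)\setminus\mathcal{C}_\alpha}\frac{\dd x\,\dd y}{|x-y|^{1+\epsilon}}<\infty .
\]

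Next I would use the explicit structure recorded in \eqref{eq:cantor-int}: one has $(0,1)\setminus\mathcal{C}_\alpha=\bigcup_{n\ge0}\bigcup_{1\le k\le 2^n}I_{k,n}$, a disjoint union of open intervals with $|I_{k,n}|=\alpha^{n+1}$. Fix one such interval $I=I_{k,n}$. Since $I$ has been removed, $\mathcal{C}_\alpha\cap I=\emptyset$, so for every $x\in I$ and every $y\in\mathcal{C}_\alpha$ we have the elementary bound $|x-y|\ge\dist(x,\partial I)$. Hence, for $x\in I$,
\[
\int_{\mathcal{C}_\alpha}\frac{\dd y}{|x-y|^{1+\epsilon}}\le\int_{\{|z|\ge\dist(x,\partial I)\}}\frac{\dd z}{|z|^{1+\epsilon}}=\frac{2}{\epsilon}\,\dist(x,\partial I)^{-\epsilon},
\]
and integrating in $x$ over $I$ and using $\epsilon<1$,
\[
\int_{I}\int_{\mathcal{C}_\alpha}\frac{\dd y\,\dd x}{|x-y|^{1+\epsilon}}\le\frac{2}{\epsilon}\int_{I}\dist(x,\partial I)^{-\epsilon}\,\dd x=\frac{4}{\epsilon}\int_0^{|I|/2}t^{-\epsilon}\,\dd t=\frac{2^{1+\epsilon}}{\epsilon(1-\epsilon)}\,\alpha^{(n+1)(1-\epsilon)}.
\]

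Summing over all removed intervals then gives
\[
I_\epsilon=\sum_{n\ge0}\sum_{k=1}^{2^n}\int_{I_{k,n}}\int_{\mathcal{C}_\alpha}\frac{\dd y\,\dd x}{|x-y|^{1+\epsilon}}\lesssim_\epsilon\sum_{n\ge0}2^n\alpha^{(n+1)(1-\epsilon)}=\alpha^{1-\epsilon}\sum_{n\ge0}\bigl(2\alpha^{1-\epsilon}\bigr)^n,
\]
and this geometric series converges precisely when $2\alpha^{1-\epsilon}<1$; taking logarithms and recalling $\ln\alpha<0$, this is equivalent to $\epsilon<1+\ln 2/\ln\alpha$, which is exactly our hypothesis. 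Thus $I_\epsilon<\infty$ and the lemma follows.

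There is no serious obstacle in this argument; the only points needing a little care are noting that the hypothesis forces $\epsilon<1$ (so that both the inner $y$-integral and the subsequent $x$-integral converge), and checking that the crude replacement of $|x-y|$ by $\dist(x,\partial I)$ is good enough — which it is, since it already produces the expected order $|I|^{1-\epsilon}$ for the contribution of each removed interval, and the summability threshold $2\alpha^{1-\epsilon}<1$ it yields matches the stated range of $\epsilon$ exactly.
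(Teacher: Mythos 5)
Your proof is correct and follows essentially the same strategy as the paper: reduce to the double integral over the fat Cantor set and its complement, decompose the complement into the removed intervals $I_{k,n}$, bound each interval's contribution by a constant times $\alpha^{(n+1)(1-\epsilon)}$, and sum the resulting geometric series. The only cosmetic difference is that you estimate the inner integral via the distance to $\partial I_{k,n}$ rather than (as the paper does) by translating $I_{k,n}$ to $(0,\alpha^{n+1})$ and integrating against $(\alpha^{n+1},1)$, but both yield the same power of $\alpha$ and hence the same threshold for $\epsilon$.
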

\begin{proof}
    Let $\mathcal{C} = \mathcal{C}_\alpha \setminus \{0,1\}$.
    By \eqref{eq::Gagliardo-seminorm} and \eqref{eq:cantor-int}, we have
    \begin{equation*}
        \| \mathbf{1}_{\mathcal{C}} \|_{[W^{\epsilon, 1}]} 
    = \int_{\mathcal{C}} \int_{(0,1)\setminus \mathcal{C}} \frac{1}{|x-y|^{1+\epsilon}} \dd x \dd y   = \| \mathbf{1}_{(0,1)\setminus \mathcal{C}} \|_{[W^{\epsilon, 1}]} 
    \leq \sum_{n\ge 0} \sum_{1 \le k \le 2^n}  \| \mathbf{1}_{I_{k, n}} \|_{[W^{\epsilon, 1}]}.
    \end{equation*}
    For each term in the summation, we compute 
    \begin{align*}
        \| \mathbf{1}_{I_{k, n}} \|_{[W^{\epsilon, 1}]}  
        & \leq \int_{I_{k, n}} \int_{[0,1] \setminus I_{k, n}} \frac{1}{|x-y|^{1+\epsilon}} \dd x \dd y 
        \leq \int_0^{\alpha^{n+1}} \int_{\alpha^{n+1}}^1 \frac{1}{|x-y|^{1+\epsilon}} \dd x \dd y
        \\
        & =  \frac{\alpha^{(n+1)(1- \epsilon)} + (1- \alpha^{n+1})^{1- \epsilon} -1 }{\epsilon (1- \epsilon)}
        \leq \frac{\alpha^{(n+1)(1- \epsilon)}}{\epsilon (1- \epsilon)}  .
    \end{align*}
    Therefore, if $\epsilon \in  (0, 1 + \ln 2 / \ln \alpha )$, then
    \begin{equation*}
        \| \mathbf{1}_{\mathcal{C}} \|_{[W^{\epsilon, 1}]} \leq \frac{1}{\epsilon (1- \epsilon)} \sum_{n \ge 0} 2^{n+1} \alpha^{(n+1)(1- \epsilon)} < \infty.
        \qedhere
    \end{equation*}
\end{proof}

\subsection{An example of an irregular  set}

For $n \ge 3$, let $h_n = \frac{1}{n (\ln n)^2}$, then $\sum_{n \ge 3} h_n \le \frac{1}{\ln 2}$.
We may construct a sequence $(a_n)_{n \geq 3}$ such that $a_{n-1} - a_n= 2 h_n$ and $\lim_{n \to \infty} a_n = 0$.
Then, let
\begin{equation*}
    A= \bigcup_{n \geq 3} ( a_n, a_n + h_n ) \subset (0,3).
\end{equation*}

\begin{lemma}
    We have $\mathbf{1}_A \notin W^{\epsilon, 1} (0,3)$ for any $\epsilon >0$.
\end{lemma}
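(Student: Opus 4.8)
The plan is to show that the Gagliardo seminorm $[\mathbf{1}_A]_{W^{\epsilon,1}}$ diverges by bounding it from below using only the self-interaction of each interval $(a_n, a_n+h_n)$ with its immediate complementary gap. Since consecutive intervals $(a_n,a_n+h_n)$ and $(a_{n-1},a_{n-1}+h_{n-1})$ are separated by a gap of length $a_{n-1}-(a_n+h_n) = 2h_n - h_n = h_n$ (using $a_{n-1}-a_n = 2h_n$), each interval of $A$ sits next to a complementary gap of comparable length $h_n$ on at least one side. I would first isolate, for each $n$, the contribution
\begin{equation*}
    [\mathbf{1}_A]_{W^{\epsilon,1}} \;\gtrsim\; \sum_{n \ge 3} \int_{a_n}^{a_n+h_n} \int_{G_n} \frac{\dd x\,\dd y}{|x-y|^{1+\epsilon}},
\end{equation*}
where $G_n$ is the complementary gap of length $\asymp h_n$ adjacent to $(a_n, a_n+h_n)$; the double integrals for distinct $n$ involve disjoint regions in $(0,3)\times(0,3)$, so they genuinely add up.

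Next I would compute the model integral: for an interval $I$ of length $h$ and an adjacent interval $G$ also of length $h$, a direct calculation (the same one performed in the fat Cantor set lemma, but now tracking the lower bound) gives
\begin{equation*}
    \int_I \int_G \frac{\dd x\,\dd y}{|x-y|^{1+\epsilon}} \;=\; \frac{2\,h^{1-\epsilon} - (2h)^{1-\epsilon}}{\epsilon(1-\epsilon)} \;=\; c_\epsilon\, h^{1-\epsilon},
\end{equation*}
with $c_\epsilon = (2 - 2^{1-\epsilon})/(\epsilon(1-\epsilon)) > 0$ for $\epsilon \in (0,1)$ (and for $\epsilon \ge 1$ the integral is simply $+\infty$, so that case is immediate). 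Substituting $h = h_n = \frac{1}{n(\ln n)^2}$ reduces the problem to the divergence of
\begin{equation*}
    \sum_{n \ge 3} h_n^{1-\epsilon} \;=\; \sum_{n \ge 3} \frac{1}{n^{1-\epsilon}(\ln n)^{2(1-\epsilon)}}.
\end{equation*}
For any $\epsilon > 0$ this series diverges, since $n^{1-\epsilon}(\ln n)^{2(1-\epsilon)} = o(n)$ as $n \to \infty$ (indeed $n^{1-\epsilon}(\ln n)^{2} = o(n)$ and then bound the logarithmic factor crudely, or compare directly with $\sum 1/n$). Hence $[\mathbf{1}_A]_{W^{\epsilon,1}} = \infty$, proving $\mathbf{1}_A \notin W^{\epsilon,1}(0,3)$.

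The only delicate point is the bookkeeping in the first step: one must verify that the adjacent gaps $G_n$ can be chosen so that the product regions $I_n \times G_n \subset (0,3)^2$ are pairwise disjoint (or overlap with bounded multiplicity), so that summing the individual lower bounds is legitimate. This follows because $I_n$ and its chosen neighboring gap $G_n$ both lie in the "annular" region between levels $a_{n+1}$ and $a_{n-1}$, and these shrink monotonically to $0$; choosing $G_n$ consistently (say, always the gap immediately to the right of $I_n$) makes the $I_n \times G_n$ genuinely disjoint. Everything else is the elementary integral computation already appearing in the excerpt, run as a lower bound rather than an upper bound.
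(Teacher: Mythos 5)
Your proposal is correct and follows essentially the same line as the paper: isolate, for each $n$, the interaction between the interval $(a_n,a_n+h_n)$ and the adjacent gap $(a_n+h_n,a_{n-1})$ of equal length $h_n$, compute the model integral to get $c_\epsilon h_n^{1-\epsilon}$, and observe that $\sum h_n^{1-\epsilon}$ diverges since $h_n = 1/(n(\ln n)^2)$. The disjointness bookkeeping you flag is automatic here because the product sets $(a_n,a_n+h_n)\times(a_n+h_n,a_{n-1})$ already have pairwise-disjoint first (and second) coordinate projections, and your remark that $\epsilon\ge 1$ is immediate is a small addition the paper leaves implicit (it also follows from monotonicity of the $W^{s,1}$ scale on bounded domains).
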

\begin{proof}
    Since $a_{n-1} - a_n= 2 h_n$, the following estimate holds true 
    \begin{align*}
        \| \mathbf{1}_A \|_{[W^{\epsilon, 1}]} & \geq \sum_{n \geq 4} \int_{a_{n}}^{a_n + h_n} \int_{a_n + h_n}^{a_{n-1}}  \frac{1}{|x-y|^{1+ \epsilon}} \dd x \dd y
        \geq \sum_{n \geq 4} \int_{a_{n}}^{a_n + h_n} \int_{a_n+h_n}^{a_n+2h_n} \frac{1}{|x-y|^{1+ \epsilon}} \dd x \dd y
        \\
        & = \sum_{n \geq 4} \int_{0}^{ h_n} \int_{ h_n}^{2h_n} \frac{1}{|x-y|^{1+ \epsilon}} \dd x \dd y
        = \frac{2 - 2^{1- \epsilon}}{\epsilon (1- \epsilon)} \sum_{n \geq 4}  h_n^{1- \epsilon}  = \infty .
        \qedhere
    \end{align*}
\end{proof}

\printbibliography

\end{document}